\DeclarePairedDelimiter\ceil{\lceil}{\rceil}
\DeclarePairedDelimiter\floor{\lfloor}{\rfloor}
\newtheorem{theorem}{Theorem}[section]
\newtheorem{definition}[theorem]{Definition}
\newtheorem{assumption}[theorem]{Assumption}
\numberwithin{equation}{section}
\newtheorem{lemma}[theorem]{Lemma}
\newtheorem{proposition}[theorem]{Proposition}
\newtheorem{corollary}[theorem]{Corollary}
\newtheorem{remark}[theorem]{Remark}
\newtheorem{conjecture}[theorem]{Conjecture}
\numberwithin{equation}{section}
\newtheorem*{theorem-non}{Theorem}
\def\bP{\mathbb{P}}
\renewcommand{\phi}{\varphi}
\renewcommand{\epsilon}{\varepsilon}
\newcommand{\Ex}{\mathop{\mathbb{E}}}  
\newcommand{\argmax}{\mathop{\text{argmax}}}  
\newcommand{\E}[1]{\mathbb{E}\!\left[#1\right]}
\newcommand{\tn}{|\kern-.1em|\kern-0.1em|}
\mathchardef\ordinarycolon\mathcode`\:
\newcommand{\inner}[1]{\langle #1 \rangle}
\begin{document}
\title{Free Energy Wells and Overlap Gap Property\\ in Sparse PCA}
\author{
{\sf G\'erard Ben Arous}\thanks{NYU; e-mail: {\tt benarous@cims.nyu.edu}}
\and
{\sf Alexander S.\ Wein}\thanks{NYU; e-mail: {\tt awein@cims.nyu.edu}. Partially supported by NSF grant DMS-1712730 and by the Simons Collaboration on Algorithms and Geometry.}
\and
{\sf Ilias Zadik}\thanks{NYU; email: {\tt zadik@nyu.edu}. Supported by a CDS Moore-Sloan Postdoctoral Fellowship. }
}
\maketitle

\begin{abstract}
We study a variant of the sparse PCA (principal component analysis) problem in the ``hard'' regime, where the inference task is possible yet no polynomial-time algorithm is known to exist. Prior work, based on the low-degree likelihood ratio, has conjectured a precise expression for the best possible (sub-exponential) runtime throughout the hard regime. Following instead a statistical physics inspired point of view, we show bounds on the depth of free energy wells for various Gibbs measures naturally associated to the problem. These free energy wells imply hitting time lower bounds that corroborate the low-degree conjecture: we show that a class of natural MCMC (Markov chain Monte Carlo) methods (with worst-case initialization) cannot solve sparse PCA with less than the conjectured runtime. These lower bounds apply to a wide range of values for two tuning parameters: temperature and sparsity misparametrization. Finally, we prove that the Overlap Gap Property (OGP), a structural property that implies failure of certain local search algorithms, holds in a significant part of the hard regime.\footnote{Accepted for presentation at the Conference on Learning Theory (COLT) 2020.}
\end{abstract}

\newpage
\tableofcontents
\newpage

\section{Introduction}

\subsection{The Model}\label{sec:model}

We consider the following variant of sparse PCA in the spiked Wigner model (also called \emph{principal submatrix recovery}). Let $W$ be a $\mathrm{GOE}(n)$ matrix, i.e., $n \times n$ symmetric with off-diagonal entries $\mathcal{N}(0,1/n)$ and diagonal entries $\mathcal{N}(0,2/n)$, all independent aside from the symmetry $W_{ij} = W_{ji}$. Let $x$ be an unknown $k$-sparse vector in $\{0,1\}^n$, i.e., exactly $k$ entries are equal to $1$. We are interested in recovering $x$ from the observation \[ Y=\frac{\lambda}{k} xx^\top+W \]
where $\lambda > 0$ is the \emph{signal-to-noise ratio}. We study the problem in the limit $n \to \infty$, where the parameters $\lambda = \lambda_n$ and $k = k_n$ may depend on $n$. We are primarily interested in the \emph{exact recovery} problem: we study algorithms which given $Y$, output $x$ \emph{with high probability}, i.e., probability tending to $1$ as $n \to \infty$. Our regime of interest will be $1 \ll k \ll n$. Throughout, we use the notation $\ll$ to hide factors of $n^{o(1)}$ (although in most cases, $\ll$ will only hide logarithmic factors).

\subsection{Our Contributions}\label{sec:contrib}

Prior work (which we review in detail in Section~\ref{sec:prior}) suggests the existence of a ``hard regime'' $\sqrt{k/n} \ll \lambda \ll \min\{1,k/\sqrt{n}\}$ where exact recovery is information-theoretically possible but no polynomial-time algorithm is known. More specifically, the work of \cite{ding-subexp} suggests the following conjecture regarding a precise expression for the best possible (sub-exponential) runtime throughout the hard regime.

\begin{conjecture}\label{conj:main}
Consider the sparse PCA problem as defined in Section~\ref{sec:model}. For any $\lambda$ in the ``hard'' regime $\sqrt{k/n} \ll \lambda \ll \min\{1,k/\sqrt{n}\}$, any algorithm for exact recovery requires runtime $\exp\left(\tilde\Omega\left(\frac{k^2}{\lambda^2 n}\right)\right)$.
\end{conjecture}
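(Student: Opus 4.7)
Since the statement posits a lower bound against \emph{every} algorithm, an unconditional proof is beyond current techniques---it would imply super-polynomial separations (even sub-exponential ones) far beyond what we presently know how to prove. My plan is therefore to establish the conjecture conditionally via an average-case reduction from planted clique, under a fine-grained strengthening of the Planted Clique Hypothesis (PCH). The assumed source hardness would read: detecting a planted $K$-clique in $G(N,1/2)$ for $K \ll \sqrt{N}$ requires time of the form $\exp(\tilde\Omega(f(N,K)))$ for an appropriate function $f$, consistent with all existing lower-bound evidence (low-degree likelihood ratio, sum-of-squares, statistical query) and with the runtime of all known algorithms.

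The reduction would proceed in three steps. \emph{Parameter calibration:} given sparse PCA parameters $(n,k,\lambda)$ in the hard regime, choose planted clique parameters $(N,K)$ so that the assumed hardness $\exp(\tilde\Omega(f(N,K)))$ translates into the target $\exp(\tilde\Omega(k^2/(\lambda^2 n)))$, while the sparsity ratio $K/N$ reflects $k/n$. \emph{Gaussianization:} following the technology of Brennan--Bresler (and earlier Ma--Wu), transform the centred adjacency matrix of the planted clique into a matrix whose total variation distance from the sparse PCA law $(\lambda/k)\,xx^\top + W$ is $o(1)$, so that any recovery algorithm for sparse PCA can be invoked on the transformed matrix with only negligible probability of misfiring. \emph{Correctness:} verify that the support of the hidden $x$ is recovered iff the clique vertices are identified, so that the overall reduction runs in time $T + \mathrm{poly}(N)$ whenever the sparse PCA solver runs in time $T$, thereby contradicting the assumed PCH strengthening whenever $T$ is smaller than the conjectured bound.

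The principal obstacle is the absence of a sufficiently sharp form of the PCH in the literature. The standard assumption yields only quasi-polynomial hardness, whereas matching $\exp(\tilde\Omega(k^2/(\lambda^2 n)))$ throughout the hard regime---where the exponent ranges from $\tilde\Omega(1)$ at $\lambda \asymp k/\sqrt{n}$ up to $\tilde\Omega(k)$ at $\lambda \asymp \sqrt{k/n}$---requires a strong sub-exponential strengthening of PCH that carries no direct complexity-theoretic justification beyond spectral, SQ, and low-degree evidence. A secondary obstacle is that Gaussianization typically incurs polylogarithmic slack in total variation, which must be tracked with unusual care so as not to destroy the sharp exponent in the target bound; in particular, the reduction must preserve the delicate interplay between $\lambda$ and $k$ on which the exponent depends quadratically.

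If these obstacles prove insurmountable---as the conjectural phrasing in the paper already suggests---the best attainable substitute is to accumulate matching lower bounds in all well-studied restricted computational models: MCMC methods (via free-energy wells), local algorithms (via the overlap gap property), low-degree polynomials, and sum-of-squares relaxations. Together these would constitute strong cumulative evidence for the conjecture without proving it outright, and this fallback is precisely the route pursued in the remainder of the paper.
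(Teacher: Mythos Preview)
The statement is labeled a \emph{conjecture} in the paper and is not proved there; the paper's contribution is to amass evidence for it in restricted computational models (free energy wells implying MCMC lower bounds, and the Overlap Gap Property implying failure of local search), exactly the fallback you describe in your final paragraph. So there is no ``paper's own proof'' to compare against, and your closing summary of the fallback route is accurate.

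Your primary plan---a fine-grained reduction from planted clique---is a genuinely different and more ambitious approach than anything the paper attempts. The obstacles you list are real, and there is one more that the paper itself flags and that would bite your reduction specifically: in the \emph{binary} signal model $x\in\{0,1\}^n$ studied here, detection is trivially easy (sum the entries of $Y$), so any reduction that routes through a detection problem on the target side cannot yield recovery hardness. Brennan--Bresler style Gaussianization typically produces instances on which the null and planted distributions are close in total variation, i.e., detection hardness; to make your plan work you would need a reduction that directly targets \emph{recovery} and survives the fact that the planted distribution is easily distinguishable from noise. Combined with the absence of any accepted sub-exponential strengthening of PCH sharp enough to deliver the exponent $k^2/(\lambda^2 n)$ across the whole hard regime, this means your conditional route, while coherent in outline, does not currently close to a proof even under standard conjectures---which is consistent with the paper leaving the statement as a conjecture.
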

This prediction is made by \cite{ding-subexp} (for a variant of our model where $x_i \in \{0,-1,1\}$) using the \emph{low-degree likelihood ratio} \cite{HS-bayesian,sos-detecting,hopkins-thesis}, which amounts to studying the power of algorithms based on low-degree polynomials.
There are known algorithms which achieve the matching runtime $\exp\left(\tilde O\left(\frac{k^2}{\lambda^2 n}\right)\right)$ \cite{ding-subexp,anytime-pca}. For instance, the following simple algorithm of~\cite{ding-subexp} proceeds in two steps. The first step is to let $k' \approx \frac{k^2}{\lambda^2 n}$ and solve, by exhaustive search, the optimization problem
\begin{equation}\label{eq:argmax}
\argmax_{v \in S_{k'}} v^\top Y v
\end{equation}
where $S_{k'}$ is the space of $k'$-sparse vectors
\begin{equation}\label{eq:Sk}
S_{k'} = \{v \in \{0,1\}^n \,:\, \|v\|_0 = k'\},
\end{equation}
and the final step uses the optimizer $v^*$ to exactly recover $x$ via a simple boosting procedure (discussed in Section~\ref{sec:boost}).

In this paper we give evidence in support of Conjecture~\ref{conj:main} by showing the existence of \emph{free energy wells} in the Gibbs measure (at various temperatures) associated with the optimization problem \eqref{eq:argmax} for various choices of the tuning parameter $k'$. As explained in Proposition \ref{prop:mcmc}, a free energy well (defined in Section~\ref{sec:FEW}) of depth $D$ at inverse temperature $\beta$ implies that a certain class of MCMC methods with parameter $\beta$ requires time at least $\exp(\Omega(D))$ to solve \eqref{eq:argmax}. Our main result can be stated informally as follows.

\begin{theorem-non}\emph{(Main result, informal)}
Suppose $\lambda$ is in the ``hard'' regime $\sqrt{k/n} \ll \lambda \ll \min\{1,k/\sqrt{n}\}$ and that additionally, $\lambda \ll (k/n)^{1/4}$. For any ``informative'' $k'$ and any $\beta \ge 0$ (possibly depending on $n$), there exists a free energy well of depth $\tilde\Omega\left(\frac{k^2}{\lambda^2 n}\right)$ with high probability.
\end{theorem-non}
Here ``informative'' $k'$ refers to the condition~\eqref{eq:k-good} (roughly $\frac{k^2}{\lambda^2 n} \lesssim k' \lesssim \lambda^2 n$) which captures the $k'$ values for which solving the optimization problem \eqref{eq:argmax} is actually useful in the sense that a near-optimal solution can be used to exactly recover $x$ via a simple boosting procedure (see Section~\ref{sec:boost}). Our main result shows that if the condition $\lambda \ll (k/n)^{1/4}$ is satisfied then MCMC cannot improve the runtime of \cite{ding-subexp,anytime-pca} for \emph{any} choice of inverse temperature $\beta$ and \emph{any} (informative) choice of misparametrization $k'$. The main weakness of the result is the condition $\lambda \ll (k/n)^{1/4}$, which is an artifact of the proof. However, in the relatively sparse regime $k \ll n^{1/3}$, the condition $\lambda \ll (k/n)^{1/4}$ holds throughout the entire ``hard'' regime. Thus we obtain a complete refutation of MCMC methods (across all $\beta$ and $k'$) throughout a large range of sparsity values (namely $k \ll n^{1/3}$).

Our results are actually somewhat stronger than what we have stated here: even when the condition $\lambda \ll (k/n)^{1/4}$ does not hold, the result still holds for some $k'$ values; in particular, it always holds for all informative $k' \le k$. One consequence of this is that it is not possible to speed up the algorithm of \cite{ding-subexp} by taking their choice of $k' \approx \frac{k^2}{\lambda^2 n}$ (the smallest ``informative'' $k'$, which in particular is less than $k$) and solving \eqref{eq:argmax} via MCMC instead of exhaustive search.

\begin{remark}\label{rem:tight}
In order for a computational hardness result to be most compelling, the class of algorithms ruled out should capture the best known algorithms. This is indeed the case here in the sense that there exists a choice of parameters, namely $k' \approx \frac{k^2}{\lambda^2 n}$ and $\beta = 0$, for which MCMC (followed by boosting) mimics the algorithm of \cite{ding-subexp} and achieves the runtime $\exp\left(\tilde O\left(\frac{k^2}{\lambda^2 n}\right)\right)$. For this choice of parameters, MCMC is simply a random walk (ignoring the data $Y$) on the space of $k'$-sparse vectors, which will visit all states within time $\exp(\tilde O(k')) = \exp\left(\tilde O\left(\frac{k^2}{\lambda^2 n}\right)\right)$ with high probability; see Appendix~\ref{app:pf-random-walk} for a proof. A consequence is that for this choice of $k'$ and $\beta$, any free energy well has depth $\tilde O\left(\frac{k^2}{\lambda^2 n}\right)$ and so our lower bound is tight. It is not clear whether MCMC with more natural parameters (e.g.\ $k' = k$) matches the above runtime; it might in fact be strictly worse. This highlights the importance of allowing $k' \ne k$ in our main result.
\end{remark}

\subsection{Prior Work: Algorithms and Lower Bounds}\label{sec:prior}

We now review some prior work that has proposed and analyzed various algorithms for our variant of sparse PCA.

\begin{itemize}

    \item {\bf PCA}: If $\lambda$ is fixed (not depending on $n$), it is well known in random matrix theory that the leading eigenvalue and eigenvector of $Y$ undergo a sharp phase transition at $\lambda = 1$. Namely, if $\lambda > 1$ then the leading eigenvector $v_1(Y)$ achieves \emph{weak recovery}: \[ \frac{\langle v_1(Y), x \rangle^2}{\|v_1(Y)\|^2 \|x\|^2} \stackrel{a.s.}{\to} 1 - \lambda^{-2} > 0 \] \cite{BBP,FP-wigner,wigner-eigenvec}. Importantly, this method does not exploit the fact that $x$ is sparse. However, the sparsity can be used to boost weak recovery to exact recovery (see Section~\ref{sec:boost}).

    \item {\bf MLE}: The maximum likelihood estimator is $\text{argmax}_{v \in S_k} v^\top Y v$ where $S_k = \{v \in \{0,1\}^n \,:\, \|v\|_0 = k\}$. This method achieves exact recovery provided $\lambda \gg \sqrt{k/n}$ (see \cite{banks-sparse}). However, computing this by exhaustive search over $S_k$ has runtime $\exp(\tilde O(k))$ since $|S_k| = {n \choose k}$.

    \item {\bf Diagonal thresholding}: The simple \emph{diagonal thresholding} algorithm \cite{JL-2,AW-sparse} identifies the $k$ largest diagonal entries of $Y$ and reports these indices as the support of $x$. This achieves exact recovery provided $\lambda \gg k/\sqrt{n}$.
    
    \item {\bf Subexponential-time algorithms}: From above we have polynomial-time exact recovery when $\lambda \gg \min\{1,k/\sqrt{n}\}$. No poly-time algorithm is known when $\lambda \ll \min\{1,k/\sqrt{n}\}$ (even for weak recovery), suggesting a ``possible but hard'' regime when $\sqrt{k/n} \ll \lambda \ll \min\{1,k/\sqrt{n}\}$. The existence of such a regime is known as a \emph{statistical-to-computational gap}. Precise runtime estimates throughout the hard regime were studied by \cite{ding-subexp,anytime-pca}, giving an algorithm of subexponential runtime $\exp\left(\tilde O\left(\frac{k^2}{\lambda^2 n}\right)\right)$. This improves upon the runtime of the MLE and gives a smooth tradeoff between runtime and the signal-to-noise ratio $\lambda$.
\end{itemize}

\noindent For the related \emph{detection} problem of hypothesis testing between $Y = \frac{\lambda}{k} xx^\top + W$ and pure noise $Y = W$, a simple ``sum test'' (sum all entries of $Y$) succeeds with high probability provided $\lambda = \omega(\sqrt{n}/k)$ (see e.g., \cite{BBH-reduction}). This observation suggests that poly-time detection is easier than poly-time recovery when $k \gg \sqrt{n}$. See \cite{detect-submatrix} for a precise analysis of the detection problem.

The proportional regime $k = \Theta(n)$ (which we do not consider in this paper) has also received attention \cite{amp-sparse,LKZ-sparse,LKZ-lowrank,OGP-submatrix}. Here an AMP (approximate message passing) algorithm gives polynomial-time Bayes-optimal recovery provided $k/n$ exceeds a certain constant \cite{amp-sparse}. Related to the current paper, \cite{OGP-submatrix} studies OGP and low-temperature MCMC methods in the proportional regime. In contrast, the techniques we use here are more elementary (although quite technically involved), using first- and second-moment arguments rather than appealing to sophisticated machinery based on the Parisi formula.

Many variants of sparse PCA, other than the one we study in this paper, have been considered in the literature. Often, the \emph{spiked covariance (Wishart) model} \cite{johnstone-spiked,JL-1,BBP} is used in place of the spiked Wigner model. Also, various assumptions on the structure of $x$ can be made, e.g., the nonzero entries can be $\pm 1$ or unconstrained. Many algorithms have been proposed and analyzed in these related settings; see e.g., \cite{covariance-thresh} and references therein.

\paragraph{Lower bounds.} Tight information-theoretic lower bounds are known for variants of sparse PCA \cite{PJ-sparse,VL-minimax,BR-opt,CMW-opt}, showing that the MLE is essentially optimal in the minimax sense. In the regime $k = \Theta(n)$, sharp information-theoretic thresholds are given by the replica formula from statistical physics; see e.g., the survey \cite{miolane-survey} and references therein. To the best of our knowledge, information-theoretic lower bounds have not been shown for our precise setting of interest. However (as is typical in these types of sparse models) we expect that the MLE is essentially information-theoretically optimal, i.e., weak recovery is impossible when $\lambda \ll \sqrt{k/n}$. This has been shown when $k \ge n^{1-\beta}$ for $\beta $ smaller than a certain constant \cite{barbier-zero-one}. It is at least easy to see that \emph{exact} recovery is impossible when $\lambda \ll \sqrt{k/n}$, since even distinguishing between a fixed pair of adjacent signals (differing in only 2 coordinates) is impossible.

In this paper we study the conjectured hard regime in our variant of sparse PCA. Many other variants of sparse PCA are also conjectured to exhibit similar statistical-to-computational gaps and these hard regimes have attracted much recent attention. Evidence for computational hardness in the conjectured hard regime has been given, including reductions from planted clique \cite{BR-reduction,WBS-reduction,BBH-reduction,BB-reduction}, failure of AMP \cite{LKZ-sparse,LKZ-lowrank}, sum-of-squares lower bounds \cite{MW-sos,sos-detecting}, and the Overlap Gap Property \cite{OGP-submatrix}. This existing work does not yield a precise expression for the optimal runtime (as in Conjecture~\ref{conj:main}).

\paragraph{Comparison to \cite{ding-subexp}.}

The precise runtime achievable in the hard regime was studied by \cite{ding-subexp} in a setting similar to Section~\ref{sec:model} except where $x$ is a sparse Rademacher vector, i.e., the $k$ nonzero entries of $x$ are uniformly $\pm 1$ (instead of our sparse binary setting where the nonzero entries are all 1). In this sparse Rademacher setting, \cite{ding-subexp} conjectured that the runtime $\exp\left(\tilde O\left(\frac{k^2}{\lambda^2 n}\right)\right)$ (achieved by their algorithm) is optimal (up to log factors in the exponent) everywhere in the hard regime $\sqrt{k/n} \ll \lambda \ll \min\{1,k/\sqrt{n}\}$. They gave formal evidence for this conjecture based on the \emph{low-degree likelihood ratio}, a method developed in a recent line of work on the sum-of-squares hierarchy \cite{pcal,HS-bayesian,sos-detecting,hopkins-thesis} (see also \cite{lowdeg-survey} for a survey).

We conjecture (Conjecture~\ref{conj:main}) that the runtime $\exp\left(\tilde O\left(\frac{k^2}{\lambda^2 n}\right)\right)$ is also optimal in the \emph{binary} setting (which could \emph{a priori} admit faster algorithms than the Rademacher setting). While the algorithm of \cite{ding-subexp} still works in the binary setting, the low-degree lower bounds do not. This is because low-degree lower bounds are actually lower bounds against \emph{detection} (which in the Rademacher case is believed to be equally hard as recovery). Since the binary setting admits a trivial detection algorithm (discussed above), low-degree lower bounds are not able to capture the recovery threshold in this setting. In this paper we follow a different point of view and give evidence for Conjecture~\ref{conj:main} based on free energy wells. Thus our main contributions as compared to \cite{ding-subexp} are (i) to corroborate the low-degree lower bounds of \cite{ding-subexp} using a completely different method, and (ii) to give lower bounds in the binary setting where the lower bounds of \cite{ding-subexp} do not apply.

\subsection{Free Energy Wells and Overlap Gap Property}

Over the last decade, an inspiring connection has been drawn in the study of the computational hardness of random optimization problems, between the geometry of the solution space and the algorithmic difficulty of the problem of interest. The connection originated in the study of spin glass systems (see e.g., \cite{TalagrandBook}) and was later used in the study of random satisfiability problems such as random $k$-SAT \cite{mezard2005clustering,AC-barriers,AchlioptasCojaOghlanRicciTersenghi} as well as average-case combinatorial optimization problems such as maximum independent set in random graphs \cite{gamarnik2014limits,rahman2014local}. Specifically, for many models it has been observed that the appearance of certain ``bottleneck" or disconnectivity properties in the solution space such as \textit{Free Energy Wells} and the \textit{Overlap Gap Property (OGP)}, which both originated in spin glass theory, indicate an algorithmic impediment for various classes of algorithms. For instance, in certain settings, variants of OGP have been shown to imply failure of local algorithms~\cite{gamarnik2014limits,rahman2014local,local-maxcut}, WALKSAT~\cite{walksat}, approximate message passing (AMP)~\cite{ogp-amp}, Langevin dynamics~\cite{ogp-lowdeg}, and low-degree polynomials~\cite{ogp-lowdeg}. In various cases, the appearance of such structural properties has been proven to coincide with the conjectured algorithmic hard phase for the problem. Moreover, it has often been observed that in the absence of such properties, even simple local improvement algorithms such as gradient descent can succeed.

Recently, a similar connection has been drawn in the context of statistical tasks (with a ``planted'' signal), between the geometry of the parameter space and the inference task of interest (see e.g., \cite{Zad19} and references therein). For example, the OGP phase transition has been studied in the contexts of high-dimensional linear regression \cite{gamarnikzadik,gamarnikzadik2} and planted clique \cite{gamarnik2019landscape}, and free energy wells have been studied in tensor PCA \cite{alg-tensor-pca}. Both OGP and free energy wells have also been recently studied in the context of sparse PCA but in the regime where the sparsity scales linearly with the dimension \cite{OGP-submatrix}.

While we consider this connection of high interest, it has so far been used in the literature solely for predicting thresholds between ``easy'' and ``hard'' phases. In the current paper we seek to go beyond this by using structural properties to address the following quantitative question: \begin{center} \textit{What is the optimal (sub-exponential) running time for solving the problem in the hard regime?}
\end{center}
Furthermore, as it has been rigorously shown in the models mentioned above, the existence of OGP or free energy wells implies the failure of MCMC methods \cite{gamarnikzadik2,gamarnik2019landscape,OGP-submatrix,alg-tensor-pca}. However, most of the current results apply either for ``low enough'' or ``high enough'' choices of the \textit{temperature} parameter. A main contribution of this work is to establish lower bounds against natural MCMC methods for \emph{all} temperature levels, including temperature levels that depend on $n$.

Furthermore, in the recent work \cite{gamarnik2019landscape} where the OGP for the planted clique model has been analyzed, strong evidence has been provided that the OGP phase transition point is significantly different from the true computational threshold. Interestingly, the authors propose as a solution the study of an ``overparametrized" solution space where indeed they provide evidence that the OGP phase transition takes place at the computational threshold.  The overparametrization takes place in terms of the sparsity parameter of the model (that is, the size of the planted clique). In this work, motivated by the value of ``misparametrization" in both  \cite{gamarnik2019landscape} and \cite{ding-subexp}, we study both the OGP and the free energy wells of the sparse PCA model for a wide range of misparametrized sparsity levels $k'$.

\subsection*{Notation}
Throughout the paper we use standard asymptotic notation. Specifically,
for any real-valued sequences $\{a_n\}_{n \in \mathbb{N}}$ and $\{b_n\}_{n \in \mathbb{N}}$, $a_n=\Theta\left(b_n\right)$
if there exists an absolute constant $c>0$ such that $\frac{1}{c}\le |\frac{a_n}{ b_n}| \le c$; $a_n =\Omega\left(b_n\right)$ or $b_n = O\left(a_n \right)$ if there exists  an absolute constant $c>0$ such that $|\frac{a_n}{ b_n}| \ge c$; $a_n =\omega \left(b_n \right)$ or $b_n = o\left(a_n\right)$ if $\lim_{n \to \infty}| \frac{a_n}{ b_n}| =+\infty$. Furthermore, $a_n=\tilde{\Theta}\left(b_n\right)$
if there exist absolute constants $c,d>0$ such that $\frac{1}{c \log ^d n}\le |\frac{a_n}{ b_n}| \le c \log ^d n$; $a_n =\tilde{\Omega}\left(b_n\right)$ or $b_n = \tilde{O}\left(a_n \right)$ if there exist absolute constants $c,d>0$ such that $|\frac{a_n}{ b_n}| \ge c/ \log^d n$. We use $a_n \ll b_n$ to mean $a_n \le b_n / n^{o(1)}$. An event occurs with \emph{high probability} if it has probability $1-o(1)$.

\section{Preliminaries}

We now focus on the sparse PCA problem as defined in Section~\ref{sec:model}.

\subsection{Boosting}\label{sec:boost}

The following algorithmic observation (used in \cite{ding-subexp}), which is an important consideration throughout this paper, provides conditions under which a sufficiently good approximate estimate of $x$ can be boosted to exact recovery. Specifically, suppose we are able to produce a (not-necessarily-sparse) ``guess'' $v \in \mathbb{R}^n$. (Also suppose for now that $v$ is independent from the noise $W$, but we will explain below why this is not restrictive.) It turns out that if $|\langle v,x \rangle|$ is large enough then we can exactly recover $x$ by thresholding the entries of $Yv$. Specifically, notice that $(Yv)_i$ for $i=1,2,\ldots,n$ is distributed as $ \frac{\lambda}{k} \langle v,x \rangle x_i + \mathcal{N}(0,\sigma_i^2)$ with $\sigma_i^2 \le 2\|v\|^2/n$. To achieve exact recovery by thresholding the entries of $Yv$ it is sufficient to have
\[ \frac{|\langle v,x \rangle|}{\|v\|} \ge (4+\varepsilon) \frac{k}{\lambda} \sqrt{\frac{\log n}{n}} \]
for any constant $\varepsilon > 0$. Here we have used the Gaussian tail bound $\Pr\{\mathcal{N}(0,\sigma^2) \ge t\} \le \exp(-t^2/(2\sigma^2))$ and a union bound over the $n$ indices.

The assumption that $v$ be independent from $W$ is not restrictive because given $Y = \frac{\lambda}{k} xx^\top + W$ it is possible to sample $Y_1 = \frac{\lambda}{k\sqrt{2}} xx^\top + W_1$ and $Y_2 = \frac{\lambda}{k\sqrt{2}} xx^\top + W_2$ where $W_1$ and $W_2$ are distributed as $W$ but are independent from each other (see e.g., Algorithm~4 of \cite{ding-subexp}). Thus we can use $Y_1$ to produce the guess $v$ and then use $Y_2$ for the boosting step, and we only suffer a factor of $\sqrt{2}$ in the signal-to-noise ratio (which is negligible for our purposes).

\subsection{Posterior Distribution}

Taking the point of view of Bayesian inference, the posterior distribution of the signal $x$ given the observation $Y$ is
\[ \Pr[x \,|\, Y] \propto \exp\left(-\frac{n}{4} \left\|Y - \frac{\lambda}{k} xx^\top\right\|_F^2 \right) \propto \exp\left(\frac{\lambda n}{2k}\, x^\top Y x\right). \]
In the language of statistical physics, this is a Gibbs distribution over the $k$-sparse vectors \begin{align}\label{def:sk}S_k = \{v \in \{0,1\}^n \,:\, \|v\|_0 = k\}\end{align} given by $\mu_\beta(v) \propto \exp(-\beta H(v))$
where the Hamiltonian is
\begin{equation}\label{eq:H}
H(v) = -v^\top Y v
\end{equation}
and the inverse temperature is
\begin{equation}\label{eq:bayes} \beta = \beta_\text{Bayes} := \frac{\lambda n}{2k}. \end{equation}
More explicitly,
\[ \mu_\beta(v) = \frac{1}{Z_\beta} \exp(-\beta H(v)) \]
where
\[ Z_\beta = \sum_{v \in S_k} \exp(-\beta H(v)) \]
is the partition function. We will sometimes consider the Gibbs measure with the same Hamiltonian but at different temperatures. We will also sometimes consider the same Hamiltonian $H(v) = -v^\top Y v$ but on the space of $k'$-sparse vectors.

\subsection{Free Energy Wells}\label{sec:FEW}

We will study the existence and depth of \emph{free energy wells}, defined as follows.

\begin{definition}\label{def:FEW}
Consider the Gibbs distribution $\mu_\beta(v) \propto \exp(-\beta H(v))$ on the space of $k'$-sparse vectors $S_{k'} = \{v \in \{0,1\}^n \,:\, \|v\|_0 = k'\}$, with Hamiltonian $H(v) = -v^\top Y v$ and some inverse temperature $\beta \ge 0$. For some $\ell > 0$, let $A = \{v \in S_{k'} \,:\, 0 \le \langle v,x \rangle < \ell\}$ and $B = \{v \in S_{k'} \,:\, \ell \le \langle v,x \rangle \le 2\ell\}$ (where, recall, $x$ is the planted signal). We say that the depth of the free energy well at correlation $\ell$ is
\[ D_{\beta,\ell} := \log \mu_\beta(A) - \log \mu_\beta(B). \]
If $D_{\beta,\ell} \le 0$ then there is no free energy well at correlation $\ell$.
\end{definition}
This is similar to the notion of free energy wells used in \cite{alg-tensor-pca,OGP-submatrix,gamarnik2019landscape}. In contrast to some previous definitions, we do not explicitly require that the third region $C = \{v \in S_{k'} \,:\, \langle v,x \rangle > 2\ell\}$ satisfies $\mu_\beta(C) > \mu_\beta(B)$ (although this will typically be true in our setting). Our notion of free energy well is designed to imply that MCMC methods take a long time to exit region $A$ (which does not require a condition on region $C$).

More specifically, if a Markov chain with stationary distribution $\mu_\beta$ is initialized according to the conditional distribution $\mu_\beta(\cdot | A)$, it requires time $\gtrsim \exp(D_{\beta,\ell})$ to escape from region $A$. This follows from standard arguments, which we repeat here for convenience. Consider the undirected graph $\mathcal{G}$ of $\binom{n}{k'}$ vertices, where each vertex corresponds to a unique binary $k'$-sparse vector and we connect two vertices if the Hamming distance between their associated vectors is exactly 2 (which is the minimal nozero distance). Also add a self-loop on every vertex. Let $X_0 \sim \mu_\beta(\cdot|A)$ and let $X_0, X_1, X_2\ldots$ be any Markov chain on the vertices of $\mathcal{G}$ (with transitions allowed only on the edges of $\mathcal{G}$) whose stationary distribution is $\mu_\beta$. One canonical choice for this Markov chain is the Metropolis chain (see e.g., \cite[Section 3.1]{levin2017markov}), which uses the following update step: if the current state is $v$, choose a random neighbor $u$ (not equal to $v$) and move to $u$ with probability $\min\{1,\exp(\beta H(v) - \beta H(u))\}$; otherwise remain at $v$.

Define the hitting time, \[\tau_{\beta}:=\inf\{t \in \mathbb{N} \,:\, X_{t} \not \in A\},\]
and the following proposition holds.
\begin{proposition}\label{prop:mcmc}
Consider a fixed $Y$ for which $\mu_\beta$ has a free energy well of depth $D_{\beta,\ell}$ at correlation $\ell$. As above, let $X_0,X_1,\ldots$ be any Markov chain on $\mathcal{G}$ with stationary distribution $\mu_\beta$, initialized from $X_0 \sim \mu_\beta(\cdot|A)$. Then for any $t \ge 1$,
\[ \Pr\{ \tau_\beta \le t \} \le t \exp(-D_{\beta,\ell}). \]
\end{proposition}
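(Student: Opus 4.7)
The plan is a standard bottleneck argument whose only nonstandard ingredient is the geometry of $\mathcal{G}$. First I would observe that a single step of the chain changes $\langle v, x\rangle$ by at most one: every edge of $\mathcal{G}$ connects two $k'$-sparse binary vectors at Hamming distance exactly $2$, so one update flips one coordinate from $1$ to $0$ and one from $0$ to $1$, and since $x \in \{0,1\}^n$ this shifts $\langle v, x\rangle$ by $-1$, $0$, or $+1$. Because $\langle v, x\rangle$ is integer-valued, any trajectory that starts in $A = \{0 \le \langle v, x\rangle < \ell\}$ and later leaves $A$ must first enter $\{\langle v,x\rangle = \lceil \ell \rceil\}$, which is contained in $B = \{\ell \le \langle v,x\rangle \le 2\ell\}$ (using the natural assumption $\ell \ge 1$, so that $\lceil \ell \rceil \le 2\ell$). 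This is the ``bottleneck'' that forces the chain to touch $B$ before escaping from $A$.

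Next I would convert from the conditional initialization $\mu_\beta(\cdot\mid A)$ to the stationary initialization $\mu_\beta$. Writing $\mathbb{P}_{\mu_\beta}$ for the law of the chain started from $\mu_\beta$, one has the trivial identity
\[ \mathbb{P}_{\mu_\beta(\cdot\mid A)}\{\tau_\beta \le t\} \;=\; \frac{\mathbb{P}_{\mu_\beta}\{X_0 \in A,\ \tau_\beta \le t\}}{\mu_\beta(A)}. \]
By the bottleneck observation above, on the event $\{X_0 \in A,\ \tau_\beta \le t\}$ there exists some $s \in \{1,\dots,t\}$ with $X_s \in B$, so the numerator is bounded by $\mathbb{P}_{\mu_\beta}\{\exists s \in \{1,\dots,t\} : X_s \in B\}$.

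Finally I would combine stationarity with a union bound. Since $X_0 \sim \mu_\beta$ and $\mu_\beta$ is stationary for the chain, every $X_s$ has marginal law $\mu_\beta$, so
\[ \mathbb{P}_{\mu_\beta}\{\exists s \in \{1,\dots,t\} : X_s \in B\} \;\le\; \sum_{s=1}^t \mu_\beta(B) \;=\; t\,\mu_\beta(B). \]
Combining the two displays yields $\mathbb{P}_{\mu_\beta(\cdot\mid A)}\{\tau_\beta \le t\} \le t\,\mu_\beta(B)/\mu_\beta(A) = t\exp(-D_{\beta,\ell})$, which is exactly the claim. The argument is essentially routine; the only mild subtlety is the ``must pass through $B$'' step, which relies on the integer-valued one-step increments of $\langle v, x\rangle$ together with the mild assumption $\ell \ge 1$. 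Notably, no reversibility of the chain is needed---only the fact that $\mu_\beta$ is a stationary distribution.
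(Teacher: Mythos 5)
Your proof is correct and follows essentially the same route as the paper's: condition on $X_0 \in A$ via Bayes' rule, bound the numerator by the event of visiting $B$, and apply stationarity plus a union bound to get $t\,\mu_\beta(B)/\mu_\beta(A) = t\exp(-D_{\beta,\ell})$. The only difference is that you spell out the bottleneck step (one-step increments of $\langle v,x\rangle$ are in $\{-1,0,+1\}$, so the first exit from $A$ lands in $B$), which the paper's proof uses implicitly in its first equality without comment.
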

The (very simple) proof can be found in Appendix \ref{app:mcmc}. This shows that certain MCMC methods with the initialization $\mu(\cdot|A)$ require time $\gtrsim \exp(D_{\beta,\ell})$. We expect that the same should hold true for uniformly random initialization, although we do not have a proof. A more general class of MCMC methods than the one we consider would allow transitions that can increase or decrease the sparsity $k'$; while we do not expect this should help, our results do not apply to such methods. Note that Proposition~\ref{prop:mcmc} applies to Markov chains that are not necessarily reversible; however, we do require that the stationary distribution be $\mu_\beta$, which does not hold for most natural non-reversible dynamics. Finally, note that free energy wells do not necessarily imply failure of various other common algorithmic approaches such as power iteration, message passing, or semidefinite programming.

\paragraph{Informative values for $\ell$ and $k'$.}

A free energy well is only meaningful for certain values of $\ell$ and $k'$. Certainly we need $1 \le 2\ell \le \min\{k,k'\}$ because $B$ should be nonempty and $\min\{k,k'\}$ is the largest possible value for $\langle v,x \rangle$. Note that a random guess $v \in S_{k'}$ achieves correlation $\langle v,x \rangle \approx \frac{kk'}{n}$, so a meaningful free energy well should also have $\ell \gg \frac{kk'}{n}$ so that a uniformly random initialization falls in region $A$ with high probability. Finally, we would like to show the existence of a free energy well at some correlation $\ell \le \frac{k}{2\lambda}\sqrt{\frac{k'}{n} \log n}$ because if an algorithm could reach a $v \in S_{k'}$ with $\langle v,x \rangle$ larger than this (by a constant), this can be boosted to exact recovery (see Section~\ref{sec:boost}). Hence, our goal will be to show there is a free energy well of large depth at some correlation
\begin{equation}\label{eq:ell-good}
\max\left\{1,\frac{kk'}{n}\right\} \ll \ell \le \frac{k}{2\lambda}\sqrt{\frac{k'}{n} \log n}.
\end{equation}
The requirement $2\ell \le \min\{k,k'\}$ from above will be subsumed by~\eqref{eq:ell-good} due to the conditions~\eqref{eq:k-good} on $k'$ discussed below.

We are only interested in $k'$ values for which solving the misparametrized optimization problem~\eqref{eq:argmax} is actually useful for recovering the signal $x$. More precisely, we require that if $v$ has maximal correlation $\langle v,x \rangle = \min\{k,k'\}$ then it can be boosted to exact recovery, i.e., we assume
\begin{equation}\label{eq:k-good}
\min\{k,k'\} \ge \frac{k}{\lambda}\sqrt{\frac{k'}{n} \log n}.
\end{equation}
Note that this is equivalent to the following bounds on $k'$:
\[ \frac{k^2 \log n}{\lambda^2 n} \le k' \le \frac{n \lambda^2}{\log n}. \]
Throughout, we will refer to ``informative'' $\ell$ values as those satisfying~\eqref{eq:ell-good}, and to ``informative'' $k'$ values as those satisfying~\eqref{eq:k-good}.

We note that for concreteness, we have fixed a particular choice for the constant in front of the boosting threshold $\frac{k}{\lambda} \sqrt{\frac{k'}{n} \log n}$. This is a conservative lower bound on the threshold at which the boosting procedure succeeds. However, our results are not sensitive to the specific choice of this constant.

\subsection{Overlap Gap Property}

We study the Overlap Gap Property (OGP), which is formally defined in terms of the near-optimal solutions of a naturally associated optimization problem. As explained in the introduction, it has been repeatedly observed that the appearance of OGP in the solution space indicates an algorithmic barrier (at least for some classes of algorithms) for solving the optimization problem of interest, sometimes matching the conjectured computational hardness threshold. Naturally, the optimization problems for which we study the OGP are \begin{align}\label{phik'}
\Phi_{k'} \; : \; \min  H(v) \;\text{ s.t.\ }\; v \in S_{k'}
\end{align}
where $H$ is the Hamiltonian~\eqref{eq:H}, $S_{k'}$ is the set of $k'$-sparse vectors~\eqref{def:sk}, and $k'$ is a possibly-misparametrized sparsity level. OGP is motivated by the study of concentration of the associated Gibbs measures (see \cite{TalagrandBook}) for low enough temperature, and therefore concerns the geometry of the near-optimal solutions. Informally, the variant of OGP typically used for statistical inference problems  (see e.g., \cite{gamarnik2019landscape}) states that any near-optimal solution of the optimization problem has a correlation with the true signal that is either very large or very small.

Formally, for the optimization problem $\Phi_{k'}$, we define $k'$-OGP as follows.

\begin{definition}[$k'$-OGP]\label{def:OGP} $\Phi_{k'}$ exhibits the $k'$-Overlap Gap Property ($k'$-OGP) if for some $\zeta_{1,n},\zeta_{2,n} \in \{0,1,\ldots,\min\{k',k\}\}$ with $\zeta_{2,n} > \zeta_{1,n} + 2$ and some $r_n \in \mathbb{R}$, the following properties hold.
\begin{itemize}
\item[(1)] There exist $v,w \in S_{k'}$ with $\inner{v,x} \leq \zeta_{1,n},$ $\inner{w,x} \geq \zeta_{2,n}$ and $\max \{ H(v), H(w)\} \leq r_n$.
\item[(2)] For any $v \in S_{k'}$ with $H(v) \leq r_n$, it holds that either $\inner{v,x} \leq \zeta_{1,n}$ or $\inner{v,x} \geq \zeta_{2,n}.$
\end{itemize} 
\end{definition}

Condition (2) states that there is a forbidden region for overlap values $\langle v,x \rangle$ of near-optimal vectors $v$. Condition (1) ensures that overlaps on both sides of the gap can actually be realized.

We can see that OGP implies failure of a certain class of ``local'' algorithms (with worst-case initialization) as follows. Consider any algorithm that keeps track of a vector $v \in S_{k'}$ and iteratively updates it to a neighboring vector in $S_{k'}$ (i.e., a vector differing in 2 coordinates) so that the objective $H(v)$ always improves (i.e.\ decreases). If the current state $v$ satisfies $\langle v,x \rangle \le \zeta_{1,n}$ and $H(v) \le r_n$ then the algorithm will never be able to reach $x$ because it is ``stuck'' on the wrong side of the gap.

More generally, if $k'$-OGP holds with $\zeta_{2,n} - \zeta_{1,n} > \Delta$, this implies failure of a broader class of local algorithms that can change $\Delta$ coordinates at each step. Our results will in fact establish the presence of a large ``gap'' $\zeta_{2,n} - \zeta_{1,n} = \omega(\sqrt{k'})$ (see Theorem~\ref{thm:OGPLow}).

\section{Main Results}

\subsection{Overview}

Recall that our main result is a lower bound on the depth of free energy wells that gives evidence in favor of Conjecture~\ref{conj:main}.

\begin{theorem}\emph{(Main result, informal)}\label{thm:main-informal}
Suppose $\lambda$ is in the ``hard'' regime $\sqrt{k/n} \ll \lambda \ll \min\{1,k/\sqrt{n}\}$, $k'$ is ``informative'' (satisfying~\eqref{eq:k-good}), and that additionally, either (i) $k' \le k$ or (ii) $\lambda \ll (k/n)^{1/4}$. For any $\beta \ge 0$ (possibly depending on $n$), there exists an ``informative'' $\ell$ (satisfying~\eqref{eq:ell-good}) such that $D_{\beta,\ell} \ge \tilde\Omega\left(\frac{k^2}{\lambda^2 n}\right)$ with high probability.
\end{theorem}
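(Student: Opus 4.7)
The strategy is to estimate $\log Z_{\beta,A}$ and $\log Z_{\beta,B}$ separately and bound their difference from below by $\tilde\Omega(k^2/(\lambda^2 n))$ with high probability, for a $\beta$-dependent choice of $\ell$. First I would decompose each partition function by the overlap level $m := \inner{v,x}$: setting $N(m) := \binom{k}{m}\binom{n-k}{k'-m}$ and $Z_\beta(m) := \sum_{v \in S_{k'} : \inner{v,x}=m} \exp(\beta v^\top Y v)$, we have $Z_{\beta,A} = \sum_{m<\ell} Z_\beta(m)$ and $Z_{\beta,B} = \sum_{\ell \le m \le 2\ell} Z_\beta(m)$. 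Inside each slice, $v^\top Y v = \tfrac{\lambda}{k} m^2 + v^\top W v$, so the signal is a deterministic function of $m$ and only the Gaussian quadratic form carries the randomness.

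\textbf{First moment.} For fixed $v \in S_{k'}$, $v^\top W v$ is centered Gaussian with variance $2(k')^2/n$, hence
\[
\E{Z_\beta(m)} \;=\; N(m)\,\exp\!\Big(\tfrac{\beta \lambda m^2}{k} + \tfrac{\beta^2 (k')^2}{n}\Big).
\]
The Gaussian MGF factor $\exp(\beta^2 (k')^2/n)$ is independent of $m$ and cancels in the ratio $\E{Z_{\beta,A}}/\E{Z_{\beta,B}}$, reducing the problem to controlling the purely deterministic function $f(m) := \log N(m) + \beta \lambda m^2/k$. Stirling-type estimates give that $\log N(m)$ is (to leading order) unimodal with peak near $m \approx kk'/n$ and decays like $-m\log(mn/(kk'))$ beyond. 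I would then choose $\ell$ within the informative range \eqref{eq:ell-good} in a $\beta$-adaptive way (near the top of that range for small $\beta$, closer to the entropy peak $kk'/n$ for large $\beta$) so that the deterministic gap $\min_{m<\ell} f(m) - \max_{\ell \le m \le 2\ell} f(m)$ is at least $\tilde\Omega(k^2/(\lambda^2 n))$; the informativeness bounds \eqref{eq:k-good} are precisely what make such an $\ell$ available.

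\textbf{Concentration.} To transfer this annealed gap to a high-probability statement I would upper-bound $Z_{\beta,B}$ by Markov's inequality ($Z_{\beta,B} \le n\,\E{Z_{\beta,B}}$ with probability $1-o(1)$) and lower-bound $Z_{\beta,A}$ via a Paley--Zygmund / second-moment argument together with the covariance identity $\cov(v^\top W v, (v')^\top W v') = 2\inner{v,v'}^2/n$, which yields
\[
\frac{\E{Z_{\beta,A}^2}}{\E{Z_{\beta,A}}^2} \;=\; \mathbb{E}_{v,v'}\!\bigl[\exp(2\beta^2 \inner{v,v'}^2/n)\bigr]
\]
with $v,v'$ drawn independently from the tilted measure on $A$. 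When $\beta$ is moderate this ratio is bounded by a constant, and the constant-probability lower bound can be boosted to a high-probability one via the Gaussian--Lipschitz concentration of $\log Z_{\beta,A}$, whose Lipschitz constant with respect to the Gaussian parameterization of $W$ is $\lesssim \beta k'/\sqrt n$.

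\textbf{Main obstacle.} The principal difficulty is making this argument uniform in $\beta$: for very large $\beta$ the Lipschitz constant above blows up and the second-moment ratio can be unbounded, so one must instead pass to an extremal comparison, using $\log Z_{\beta,S} \approx \beta \max_{v \in S} v^\top Y v$ and controlling $\max_A v^\top Y v - \max_B v^\top Y v$ by Gaussian extreme-value theory. The entropy excess $\log(|A|/|B|)$ produces a larger noise maximum over $A$, which must exceed the additional signal $\le 4\lambda\ell^2/k$ available in $B$. When $k' \le k$ the overlap $m$ is automatically capped at $k'$, which keeps the signal safely small. When $k' > k$ the overlap can reach $k$ and the signal can grow as large as $\lambda k$; the auxiliary hypothesis $\lambda \ll (k/n)^{1/4}$ is precisely what is needed to keep the signal below the entropy-driven noise margin and thereby ensure a deep free energy well simultaneously for every $\beta \ge 0$.
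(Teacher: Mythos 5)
Your two-regime architecture (an annealed/moment argument for moderate $\beta$, handed off to a ground-state comparison for large $\beta$) matches the paper's split into a high-temperature bound (Theorem~\ref{thm:ent-depth}, Corollary~\ref{cor:hightemp-final}) and a low-temperature bound (Theorem~\ref{thm:FEWLow}), and your first-moment computation $\E{Z_\beta(m)}=N(m)\exp(\beta\lambda m^2/k+\beta^2(k')^2/n)$ is correct. The gap is in the moderate-$\beta$ concentration step. The paper never lower-bounds $Z_\beta(A)$ by Paley--Zygmund: after stripping the signal at the cost of an additive $4\beta\lambda\ell^2/k$ (since $\inner{v,x}\le 2\ell$ on $B$), it writes $\tilde Z_\beta(A)/\tilde Z_\beta(B)\ge(1-\tilde\mu_\beta(A^c))/\tilde\mu_\beta(A^c)$ and uses the exchangeability identity $\mathbb{E}_W[\tilde\mu_\beta(A^c)]=|A^c|/\binom{n}{k'}$ plus Markov and the counting bound $|A^c|/\binom{n}{k'}\le 2^{1-\ell}$ --- no second moment at all, valid for every $\beta$. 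Your route needs (a) $\mathbb{E}[Z_{\beta,A}^2]/\mathbb{E}[Z_{\beta,A}]^2=O(1)$ and (b) fluctuations of $\log Z_{\beta,A}$, of order $\beta k'/\sqrt{n}$, to be $o(k^2/(\lambda^2 n))$. Both can fail exactly where you need them: your extremal argument cannot take over before $\beta\gtrsim\beta_{\mathrm{Bayes}}\log n$ (the additive error $\log\binom{n}{k'}\asymp k'\log(n/k')$ in $\log Z\approx\beta\max v^\top Yv$ is only beaten when $\beta\cdot\frac{k'k}{\lambda n}\gg k'\log n$), yet at $\beta\asymp\beta_{\mathrm{Bayes}}=\lambda n/(2k)$ and $k'$ near its informative maximum $\lambda^2 n/\log n$, condition (b) becomes $\lambda^5\ll k^3/n^{5/2}$ (up to logs), which does not follow from $\lambda\ll(k/n)^{1/4}$ when $k\ll n^{5/7}$; and even the typical-overlap contribution $\exp(2\beta^2(k')^4/n^3)$ to the ratio in (a) diverges there. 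So as written the two halves of your argument do not meet, and the statement ``for any $\beta\ge 0$'' is not covered.

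On the low-temperature half and the auxiliary hypothesis: the extremal heuristic is the right start, but the substance of the paper's proof is the part you leave as ``Gaussian extreme-value theory,'' namely a matching two-sided estimate $\phi_{k'}(m)\approx\Gamma_{k'}(m)$ for the overlap-restricted ground states (first-moment upper bound; a second-moment/Paley--Zygmund lower bound over pairs decomposed by their triple intersection with $x$, boosted by Borell--TIS --- Theorem~\ref{FM}), combined with Proposition~\ref{prop:FEW}. Your explanation of why $\lambda\ll(k/n)^{1/4}$ or $k'\le k$ is needed is not the right one: it is not about capping the maximal signal $\lambda\min\{k,k'\}^2/k$ against the noise maximum, but rather (via Remark~\ref{rem:addit}) about guaranteeing the technical condition $k'=o\bigl((k^2n/(\lambda^2\log n))^{1/3}\bigr)$ that controls the $(k')^4/n^2$ error term in the second-moment estimate \eqref{phiK}, and about verifying $\lambda^2\ll k/k'$ so that the high-temperature range $\beta\ll n/(\lambda k')$ reaches $\beta_{\mathrm{Bayes}}\cdot\mathrm{polylog}(n)$. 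Finally, your $\beta$-adaptive choice of $\ell$ is backwards at low temperature: the paper takes $\ell\asymp\frac{k}{\lambda}\sqrt{\frac{k'}{n}\log\frac{n}{k'}}$, the top edge of the informative window \eqref{eq:ell-good}, not a value near the entropy peak $kk'/n$.
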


\begin{remark}
If $k \ll n^{1/3}$ then condition (ii) in Theorem~\ref{thm:main-informal} is automatically satisfied due to the bounds on $\lambda$ (namely the assumption $\lambda \ll k/\sqrt{n}$).
\end{remark}

\subsection{Proof Techniques}\label{sec:pf-tech}

Our main lower bound on the depth of free energy wells is deduced by combining two different lower bounds: one for high temperature ($\beta$ small) and one for low temperature ($\beta$ large). In particular, we have the following main results:

\begin{itemize}
    \item[(1)] {\bf High-temperature lower bound} (see Corollary~\ref{cor:hightemp-final}): If $\lambda \ll \min\{1,k/\sqrt{n}\}$, $\beta \ll \min\{\frac{k}{\lambda},\frac{n}{\lambda k'}\}$, and $k'$ is informative, then there is an informative $\ell$ for which $D_{\beta,\ell} = \Omega\left(\min\left\{\frac{k}{\beta \lambda},\frac{k}{\lambda}\sqrt{\frac{k'}{n}}\right\}\right)$ with high probability.
    
    \item[(2)] {\bf Low-temperature lower bound} (see Theorem~\ref{thm:FEWLow} and Appendix~\ref{app:increase-lambda}): Suppose that $\sqrt{k/n} \ll \lambda \ll \min\{1,k/\sqrt{n}\}$, $k'$ is informative, and $k' \ll \sqrt[3]{\frac{k^2 n}{\lambda^2}}$. If $\beta \gg \beta_{\mathrm{Bayes}} = \frac{\lambda n}{2k}$ then there is an informative $\ell$ for which $D_{\beta,\ell} = \Omega(k' \log(n/k'))$ with high probability.
    
    \item[(3)] {\bf Overlap Gap Property} (see Theorem \ref{thm:OGPLow}): If $\sqrt{k/n} \ll \lambda \ll \min\{1,k/\sqrt{n}\}$, $k'$ is informative\footnote{Technically, our result on OGP (Theorem~\ref{thm:OGPLow}) requires an assumption on $k'$ that is slightly stronger (by multiplicative logarithmic factors) than ``informative''.}, and $k' \ll \sqrt[3]{\frac{k^2 n}{\lambda^2}}$, then $k'$-OGP holds with high probability. In particular, if $k \ll n^{1/3}$ or $k' \le k$ then $k'$-OGP holds for the entire ``possible but hard" regime.
    
\end{itemize}

\noindent Our main result (Theorem~\ref{thm:main-informal}) follows from combining (1) and (2) in a straightforward way; the details are deferred to Appendix~\ref{app:main-pf}.

To prove the high-temperature lower bound, we follow an argument that was used by \cite{alg-tensor-pca} to show the existence of free energy wells in tensor PCA. The idea is to leverage ``entropy'', i.e., the fact that there are many more vectors $v \in S_{k'}$ with small correlation $\langle v,x \rangle$ than with large correlation, and this effect overpowers the strength of the signal (if temperature is sufficiently high).

The core technical contribution of this paper is to establish the presence of OGP. This requires both a first moment argument and a second moment argument. The key arguments towards proving it can be found in Section \ref{sec:lowLemmmas}.

The low-temperature lower bound is deduced using the same tools that prove OGP. The presence of OGP can be thought of as a free energy well at zero temperature ($\beta = \infty$). Furthermore, for sufficiently low temperature levels the existence of sufficiently deep free energy wells can be established as a direct corollary of OGP. A slightly more involved use of the tools developed to prove OGP allows us to prove the exact low-temperature lower bound described above and in Theorem~\ref{thm:FEWLow}.

\subsection{Low Temperature}

In this section we formally state our results in the low-temperature regime, with proofs deferred to Sections \ref{sec:FEWLow} and \ref{sec:OGP} (and with machinery leading up to these proofs developed in Sections \ref{sec:lowLemmmas} through \ref{sec:2MM}). For the results in this section, care has been taken to give as strong results as possible, often at the level of logarithmic factors. We start with the parameter assumptions under which the results in this section hold. We assume that the signal-to-noise ratio $\lambda$ satisfies the following.
\begin{assumption}\label{assum:LambdaMain}
 We assume \begin{align}\label{assum:lambda}
 \lambda=\omega \left( \frac{k}{n }\log \left( \frac{n}{k} \right) \right) \quad \text{and } \quad \lambda=o\left(\min\left\{1,\frac{k}{\sqrt{n}\log n}\right\}\right).
 \end{align}
 \end{assumption}
Up to logarithmic factors, this regime for $\lambda$ covers the entire ``hard" regime $\sqrt{k/n} \ll \lambda \ll \min\{1,k/\sqrt{n}\}$. We have only required the weaker lower bound $\lambda \gg k/n$ here, but $\lambda \gg \sqrt{k/n}$ will be implied by~\eqref{k'Assum1} below. Next we restrict the values of $k'$.
\begin{assumption}\label{assum:k'Main} We assume 
\begin{align}\label{k'Assum1}
    k'=\omega\left( \frac{k^2}{\lambda^2 n} \,\frac{ \log^2 \left( \frac{n}{k}\right)}{\log \left( \frac{\lambda n}{k \log(n/k)} \right)}\right),\quad k'=o\left(\frac{\lambda^2 n}{\log^3 n}\right), \end{align}
    and
    \begin{align}\label{addit_assum}k'=o\left( \left(\frac{k^2 n}{\lambda^2 \log n}\right)^{\frac{1}{3}} \right).
\end{align}
\end{assumption}
The regime for $k'$ covered in \eqref{k'Assum1} matches, up to logarithmic factors, the informative values of $k'$. While this does not quite cover the full range of informative $k'$ values (as defined in~\eqref{eq:k-good}) due to the log factors, a simple argument allows us to extend our main theorem on free energy wells (Theorem~\ref{thm:FEWLow}) to the remaining $k'$ values by increasing $\lambda$ slightly; see Appendix~\ref{app:increase-lambda} for details. The effect of the additional assumption \eqref{addit_assum} is discussed in the following remark.

\begin{remark} \label{rem:addit}
The additional assumption \eqref{addit_assum} is believed to be of a technical nature. In the case where $\lambda=o\left( \sqrt[4]{\frac{k}{n}} \log n \right)$, the second assumption in \eqref{k'Assum1} implies \eqref{addit_assum} by direct comparison. If instead $k' \le k$ then~\eqref{addit_assum} holds provided $\lambda = o\left(\sqrt{\frac{n}{k \log n}}\right)$, which is, up to log factors, implied by the ``hard'' regime for $\lambda$ (namely $\lambda \ll 1$). Thus the technical assumption~\eqref{addit_assum} is not needed when either $\lambda \ll \sqrt[4]{k/n}$ or $k' \le k$.
\end{remark}

Our first result is on the existence of sufficiently deep free energy wells at low temperature. Recall the definition $\beta_{\mathrm{Bayes}} := \frac{\lambda n}{2k}$ from equation \eqref{eq:Bayes}.

\begin{theorem}\label{thm:FEWLow} Suppose $k,k'=o(n)$, $k,k'=\omega\left(\sqrt{\log n}\right)$, and $\ell=\Theta\left(\frac{k}{\lambda}\sqrt{\frac{k'}{n}\log \frac{n}{k'}}\right)$.
Under Assumptions \ref{assum:LambdaMain} and \ref{assum:k'Main}, the following holds for some universal constants $d_1,c_0>0$ and $d_2>1$. If $\ell \leq c_0 \frac{k}{\lambda}\sqrt{\frac{k'}{n}\log \frac{n}{k'}}$ then 
\begin{align} \label{depthOGP}
   d_1 \left(\frac{\beta}{\beta_{\mathrm{Bayes}}}-d_2\right)k'\log \left( \frac{n}{k'}\right)\leq D_{\beta,\ell} \leq  d_1\left(\frac{\beta}{\beta_{\mathrm{Bayes}}}+d_2\right)k'\log \left( \frac{n}{k'}\right)
    \end{align}
    with high probability as $n \to +\infty$. In particular, for any $\beta=\omega\left(\beta_{\mathrm{Bayes}}\right)$ it holds
    \begin{align}\label{eq:Bayes}
    D_{\beta,\ell}=\Theta\left(\frac{\beta}{\beta_{\mathrm{Bayes}}}k'\log \left( \frac{n}{k'}\right) \right)=\Theta\left(\beta\frac{kk'}{\lambda n}\log \left( \frac{n}{k'}\right) \right)=\omega\left(k'\log \left( \frac{n}{k'}\right) \right)
    \end{align}
    with high probability as $n \to +\infty$.
\end{theorem}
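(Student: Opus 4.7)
The plan is to convert the well depth $D_{\beta,\ell}$ into a difference of minima of $H$ plus an entropy correction, and then to pull the min estimates directly from the OGP machinery of Sections \ref{sec:lowLemmmas}--\ref{sec:2MM}. First, apply the elementary bounds $\exp(-\beta\min_S H) \le \sum_{v\in S}\exp(-\beta H(v)) \le |S|\exp(-\beta\min_S H)$ to $S\in\{A,B\}$ to obtain the sandwich
\begin{equation*}
\beta\bigl(\min_{v\in B}H(v)-\min_{v\in A}H(v)\bigr) - \log|B| \;\le\; D_{\beta,\ell} \;\le\; \log|A| + \beta\bigl(\min_{v\in B}H(v)-\min_{v\in A}H(v)\bigr).
\end{equation*}
Since $A,B\subseteq S_{k'}$, we have $\log|A|,\log|B| \le \log\binom{n}{k'} = O(k'\log(n/k'))$, which is precisely the additive $d_1 d_2\, k'\log(n/k')$ slack allowed on both sides of \eqref{depthOGP}.

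Next, control $\min_A H$ and $\min_B H$ from both sides by studying the Gaussian field $g_v := v^\top W v$, which is centered with $\mathrm{Var}(g_v)=2k'^2/n$ and $\mathrm{Cov}(g_v,g_{v'})=2|\mathrm{supp}(v)\cap\mathrm{supp}(v')|^2/n$. Decomposing $H(v)=-\tfrac{\lambda}{k}\langle v,x\rangle^2 - g_v$, four high-probability estimates are needed: (i) a second-moment existence bound showing that each of $A$ and $B$ contains a vector realizing $-g_v \approx 2k'\sqrt{(\log|S|)/n}$ at some favorable overlap, and (ii) a first-moment Gaussian-tail union bound preventing any $v$ in $A$ or $B$ from exceeding this by more than a constant factor. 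These are exactly the ingredients developed to prove $k'$-OGP (Theorem \ref{thm:OGPLow}); the low-temperature well is essentially the $\beta<\infty$ version of the same statement. Combining them yields, with high probability,
\begin{equation*}
\min_{v\in B}H(v)-\min_{v\in A}H(v) \;=\; \Theta\!\left(\tfrac{\lambda \ell^2}{k}\right) \;=\; \Theta\!\left(\tfrac{kk'}{\lambda n}\log\tfrac{n}{k'}\right),
\end{equation*}
where the signal advantage $\Theta(\lambda\ell^2/k)$ enjoyed by $B$ overwhelms the slightly larger noise fluctuations the bigger set $A$ can harvest, provided the constant $c_0$ in the definition of $\ell$ is chosen small enough.

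Finally, multiplying by $\beta$ and using $\beta_{\mathrm{Bayes}}=\lambda n/(2k)$ converts this main term into $\Theta\bigl((\beta/\beta_{\mathrm{Bayes}})\,k'\log(n/k')\bigr)$; plugging back into the Step~1 sandwich produces the two-sided bound \eqref{depthOGP}, and the ``in particular'' conclusion for $\beta=\omega(\beta_{\mathrm{Bayes}})$ is immediate because the $\beta$-linear contribution then dominates the entropy slack. The main technical obstacle is the second-moment existence step, since $\{g_v\}$ has strong positive correlations for vectors sharing many coordinates, so the naive second moment massively over-counts; one has to restrict to carefully chosen sub-collections of vectors with tightly controlled pairwise overlap. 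The auxiliary assumption \eqref{addit_assum} is used precisely at this step to keep the off-diagonal pair contribution below the diagonal one in the variance estimate, which is what restricts the result to $k' \ll (k^2 n/\lambda^2)^{1/3}$ and is the origin of the extra ``increase $\lambda$'' workaround of Appendix~\ref{app:increase-lambda}.
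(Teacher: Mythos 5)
Your outline follows the paper's proof essentially step for step: your Step~1 sandwich is exactly Proposition~\ref{prop:FEW}, your Step~2 is the first/second-moment approximation of the slice-wise ground states $\phi_{k'}(m)$ by the deterministic curve $\Gamma_{k'}(m)$ (Theorem~\ref{FM} and Corollary~\ref{cor:approx}), and you correctly locate both the final magnitude $\Theta\bigl(\tfrac{kk'}{\lambda n}\log\tfrac{n}{k'}\bigr)$ and the precise place where assumption~\eqref{addit_assum} enters (the $(k')^4/n^2$ branch of the second-moment error, which must be $o(k'k/(\lambda n))$ for the approximation to be useful).

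Two points need repair, though. First, your verbal account of the competition is backwards: you write that ``the signal advantage enjoyed by $B$ overwhelms the noise fluctuations the bigger set $A$ can harvest,'' but if the signal term won one would have $\min_B H < \min_A H$ and hence $D_{\beta,\ell}<0$, i.e.\ no well. The positive depth comes precisely from the opposite outcome: for $\ell \le c_0\ell_c$ the entropy/noise gain available at typical overlap dominates the signal gain at overlap $\ell$, so the low-overlap ground state is strictly deeper and $\min_B H - \min_A H > 0$. Your displayed formula has the correct sign, so this is a description error rather than a computational one, but it is exactly the property that the smallness of $c_0$ (equivalently, $\ell$ lying below the critical overlap $\ell_c$) is buying you. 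Second, ``contains a vector realizing the optimum at some favorable overlap'' hides a necessary step: to evaluate $\min_A H$ and $\min_B H$ you must know \emph{which} overlap slice within $[\lfloor kk'/n\rfloor,\ell]$ and within $[\ell,2\ell]$ attains the minimum, and for the two-sided bound~\eqref{depthOGP} you need matching upper and lower estimates on the resulting difference. This is supplied in the paper by the monotonicity analysis of $\Gamma_{k'}$ (Theorem~\ref{thm:FM2}, parts (a) and (b)): $\Gamma_{k'}$ is strictly increasing on the whole range $[0,2\ell]$ when $\ell<\tfrac12\ell_c$, so the minima sit at $m=\lfloor kk'/n\rfloor$ and $m=\ell$ respectively, and telescoping the per-step increments gives the $\Theta$-estimate. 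Without that ingredient your argument only produces one-sided bounds.
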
The proof of Theorem \ref{thm:FEWLow} can be found in Section \ref{sec:FEWLow}.

Notice that the choice of overlap $\ell=\Theta\left(\frac{k}{\lambda}\sqrt{\frac{k'}{n}\log \frac{n}{k'}}\right)$ falls exactly at the edge of informative values of overlap as discussed in Section \ref{sec:FEW}. Hence, Theorem \ref{thm:FEWLow} provides, under our assumptions, for all $\beta=\omega\left(\beta_{\mathrm{Bayes}}\right)$, the exact (up to constants) depth of the free energy wells at the edge of the informative overlaps $\ell=\Theta\left(\frac{k}{\lambda}\sqrt{\frac{k'}{n}\log \frac{n}{k'}}\right)$. As discussed above, we explain in Appendix~\ref{app:increase-lambda} how to extend the assumption~\eqref{k'Assum1} to cover all informative $k'$ values. Thus, based on Remark \ref{rem:addit}, in the case $\lambda \ll \sqrt[4]{k/n}$, the result applies for the whole range of informative $k'$ values.

Our next result is on the Overlap Gap Property as defined in Definition \ref{def:OGP}. We prove that under Assumptions \ref{assum:LambdaMain} and \ref{assum:k'Main}, $k'$-OGP indeed holds, providing evidence for the computational hardness of the model under these assumptions.

\begin{theorem}\label{thm:OGPLow}
Suppose $k',k=o\left(n\right)$ and $k,k'=\omega\left(\sqrt{ \log n} \right)$. Under Assumptions \ref{assum:LambdaMain} and \ref{assum:k'Main}, the optimization problem $\Phi_{k'}$ exhibits the $k'$-OGP with $\zeta_{2,n}-\zeta_{1,n}=\omega\left(\sqrt{k'} \right),$ with high probability as $n \rightarrow +\infty.$
\end{theorem}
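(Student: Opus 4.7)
The plan is to establish the two conditions of Definition~\ref{def:OGP} separately. Condition (2), the forbidden overlap region, is handled via a first moment / union bound argument; condition (1), the existence of near-optimal vectors on both sides, is handled via a Paley--Zygmund second moment argument on the small-overlap side together with an essentially deterministic construction on the large-overlap side. Throughout, the key decomposition is that for any $v \in S_{k'}$ with $\langle v,x \rangle = m$,
\[
-H(v) \;=\; v^\top Y v \;=\; \frac{\lambda m^2}{k} + v^\top W v, \qquad v^\top W v \sim \mathcal{N}\!\left(0,\, \frac{2(k')^2}{n}\right),
\]
so that $\{H(v) \leq r_n\}$ is a one-sided Gaussian tail event whose probability is controlled by the ratio $(-r_n - \lambda m^2/k)_+ \,/\, (k'/\sqrt{n})$.

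For condition (2), combining Stirling's estimate for $|S_{k'}^m| = \binom{k}{m}\binom{n-k}{k'-m}$ with the Gaussian tail yields
\[
\log \mathbb{E}\!\left[\#\{v \in S_{k'}^m : H(v) \leq r_n\}\right] \;\lesssim\; \log|S_{k'}^m| \;-\; \frac{n\,(-r_n - \lambda m^2/k)_+^2}{4(k')^2}.
\]
I would calibrate $r_n$ so that this exponent is $-\omega(\log n)$ for all integers $m$ in an interval $(\zeta_{1,n}, \zeta_{2,n})$ of length $\omega(\sqrt{k'})$; a union bound over $m$ and Markov's inequality then deliver condition (2) with high probability. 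The informative overlap scale $\Theta\bigl(\tfrac{k}{\lambda}\sqrt{\tfrac{k'}{n}\log\tfrac{n}{k'}}\bigr)$ from~\eqref{eq:ell-good} dictates the location of $\zeta_{2,n}$, and the quadratic growth of the signal term $\lambda m^2/k$ eventually dominates the entropic term $\log|S_{k'}^m|$ as $m$ leaves the typical-overlap range, opening the gap under Assumptions~\ref{assum:LambdaMain} and~\ref{assum:k'Main}. For the large-overlap witness $w$ in condition (1), I would take $w$ as an explicit $k'$-sparse vector with $\langle w,x\rangle = \min\{k,k'\}$ (by subsetting or padding the support of $x$); the signal $\lambda\min\{k,k'\}^2/k$ dominates the single Gaussian fluctuation $w^\top W w$ in our regime, so $H(w) \leq r_n$ with high probability.

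For the small-overlap witness, I apply the Paley--Zygmund inequality to $N := \#\{v \in S_{k'} : H(v) \leq r_n,\; \langle v,x \rangle \leq \zeta_{1,n}\}$; the first moment calculation already gives $\mathbb{E}[N] \to \infty$ when summed over $m \leq \zeta_{1,n}$. The main obstacle is the second moment bound $\mathbb{E}[N^2] \leq (1+o(1))\mathbb{E}[N]^2$. For this, pairs $(v,w)$ are classified by $t = |\mathrm{supp}(v) \cap \mathrm{supp}(w)|$ (and by their individual overlaps with $x$), noting that $(v^\top W v,\, w^\top W w)$ is jointly Gaussian with covariance $2t^2/n$. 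The bivariate Gaussian tail must be bounded uniformly across all pair types, and the resulting sum in $t$ analyzed via a saddle-point / Laplace-type argument showing that the dominant contribution comes from nearly independent pairs (where $t \approx (k')^2/n$). The cube-root condition $k' \ll \bigl(\tfrac{k^2 n}{\lambda^2 \log n}\bigr)^{1/3}$ in~\eqref{addit_assum} is precisely what makes the correlated-pair contributions negligible, consistent with Remark~\ref{rem:addit}, which states that this assumption enters the analysis only here to close the second moment estimate.
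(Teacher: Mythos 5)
Your overall architecture matches the paper's: you recognize the Gaussian decomposition $H(v) = -\lambda\langle v,x\rangle^2/k - v^\top W v$ with $v^\top W v \sim \mathcal{N}(0, 2(k')^2/n)$, use a first-moment/union bound over overlap classes for condition (2), and invoke a second-moment (Paley--Zygmund) argument for the small-overlap witness in condition (1). You also correctly diagnose that the cube-root condition~\eqref{addit_assum} is what tames the correlated pairs in the second-moment estimate (this is exactly where the paper uses it, inside Corollary~\ref{cor:approx}). The paper packages the first- and second-moment bounds into a deterministic curve $\Gamma_{k'}(\ell) = -\lambda\ell^2/k - 2k'\sqrt{n^{-1}\log\bigl[\binom{k}{\ell}\binom{n-k}{k'-\ell}\bigr]}$ (Theorem~\ref{FM}) and separately analyzes this curve's monotonicity and gap (Theorem~\ref{thm:FM2}), but that is a presentational choice, not a substantive difference.

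There are, however, two concrete gaps. First, your second-moment target $\mathbb{E}[N^2] \leq (1+o(1))\mathbb{E}[N]^2$ is stronger than what actually holds and than what the paper proves. The paper (Section~\ref{sec:2MM}) only establishes $\mathbb{E}[Z^2]/\mathbb{E}[Z]^2 \leq \exp\bigl(C_0 \max\{\ell^4/(k'k^2),(k')^3/n^2,1\}\log n\bigr)$, which can be super-polynomially large; Paley--Zygmund then yields only a weak, sub-constant probability that $\psi_{k'}(\ell)$ exceeds the target, and the argument is completed by \emph{combining} that with Borell--TIS concentration of the Gaussian maximum $\psi_{k'}(\ell)$ (see~\eqref{Borell}--\eqref{FR}). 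Without this two-step boosting, your Laplace/saddle-point plan for $\mathbb{E}[N^2]/\mathbb{E}[N]^2 \to 1$ will not close. Second, your large-overlap witness at maximal overlap $\langle w,x\rangle = \min\{k,k'\}$ is not what the paper uses and does not obviously satisfy $H(w) \leq r_n$ under the stated assumptions. The threshold $r_n$ for the forbidden band sits near the crest of the curve at scale $\ell_c = \Theta\bigl(\tfrac{k}{\lambda}\sqrt{\tfrac{k'}{n}\log\tfrac{n}{k'}}\bigr)$, roughly $r_n \approx -2(k')^{3/2}\sqrt{\log(n/k')/n}$, and you would need $\lambda\min\{k,k'\}^2/k$ to exceed this; in the regime $k' \leq k$ this reduces to $k' \gtrsim \frac{k^2\log(n/k')}{\lambda^2 n}$, which is not automatic from Assumption~\ref{assum:k'Main} (the $\omega(\cdot)$ lower bound on $k'$ has a different log correction and, depending on how $k'$ scales relative to $k$, the required inequality can fail or require a much more delicate calculation). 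The paper avoids this entirely by placing the large-overlap witness at $C_0\ell_c$ with $C_0 > 10$: this is still $\omega(\zeta_{2,n})$ but also $o(\min\{k,k'\})$, so it stays inside the regime where the curve approximation $\phi_{k'}(\ell) \approx \Gamma_{k'}(\ell)$ holds, and the witness inequality then follows directly from the gap established in Theorem~\ref{thm:FM2}(d). You should replace your explicit maximal-overlap construction with the Paley--Zygmund/Borell--TIS witness at $C_0\ell_c$, which you have already built for the small-overlap side.
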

The proof of Theorem \ref{thm:OGPLow} can be found in Section~\ref{sec:OGP}. Similarly to above, based on Remark \ref{rem:addit}, in the case $\lambda=o\left( \sqrt[4]{k/n} \log n\right)$, the result applies for essentially all informative $k'$ values.

\subsection{High Temperature}

In this section we present our lower bound on depth in the high-temperature regime, with proofs deferred to Section~\ref{sec:pf-high-temp}. These results are sometimes loose by factors of $n^{o(1)}$ in favor of simpler statements and proofs. We give non-asymptotic results which hold for all specified values of the parameters, not only in the large-$n$ limit. We first present a general result that holds for any $\ell$, and later specialize to the best choice of $\ell$ (Corollary~\ref{cor:hightemp-final}). The proof of the following core theorem can be found in Section~\ref{sec:pf-high-temp}.

\begin{theorem}\label{thm:ent-depth}
Fix a constant $\delta > 0$. For any $n$ exceeding some $n_0 = n_0(\delta)$, for any $k$, for any $k' \le n^{1-\delta}$, for any $k$-sparse signal $x \in \{0,1\}^n$, for any $\beta \ge 0$, and for any $\ell \ge 2e k (k'/n)^{1-\delta}$ (with $1 \le 2\ell \le \min\{k,k'\}$), with probability at least $1-2^{-(\ell-2)/2}$ over $W$, the free energy well at correlation $\ell$ has depth bounded by
\begin{equation}\label{eq:D-result}
D_{\beta,\ell} \ge -\frac{4 \beta \lambda}{k} \ell^2 + \frac{\log 2}{2}\, \ell - \log 2.
\end{equation}
\end{theorem}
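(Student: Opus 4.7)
The plan is to separate the signal and noise contributions of $v^\top Y v = \frac{\lambda}{k}\inner{v,x}^2 + v^\top W v$. Since $\inner{v,x}^2 \in [0,\ell^2)$ on $A$ and $\inner{v,x}^2 \in [\ell^2, 4\ell^2]$ on $B$, one immediately obtains
\[
D_{\beta,\ell} \;\ge\; -\frac{4\beta\lambda}{k}\ell^2 \;+\; \log\frac{Z(A)}{Z(B)}, \qquad Z(C) := \sum_{v \in C} \exp\!\big(\beta v^\top W v\big),
\]
so it suffices to prove the noise-weighted bound $Z(A)/Z(B) \ge 2^{(\ell-2)/2}$ with probability at least $1 - 2^{-(\ell-2)/2}$.

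At $\beta = 0$ this reduces to the pure counting inequality $|A|/|B| \ge 2^{(\ell-2)/2}$. Since $|B_m| := \#\{v \in S_{k'} : \inner{v,x} = m\} = \binom{k}{m}\binom{n-k}{k'-m}$ is Hypergeometric with mode near $m^\ast \approx kk'/n$, and the hypothesis $\ell \ge 2ek(k'/n)^{1-\delta}$ forces $\ell/m^\ast \ge 2e(n/k')^\delta \gg 1$, the ratio $|B_{m-1}|/|B_m| \gtrsim mn/(kk')$ exceeds $2e$ throughout $m \in [1,\ell]$; telescoping $\ell$ steps and absorbing the factor $\ell+1$ from $|B| \le (\ell+1)|B_\ell|$ yields $|A|/|B| \gtrsim 2^\ell$, comfortably beating the target.

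For general $\beta > 0$, I would upgrade this counting argument via a symmetrization using the GOE coordinate-permutation invariance. Restricting to $B_\ell$ (the dominant stratum of $B$), for each $v \in B_\ell$ with support $S \sqcup T$, $S \subset \mathrm{supp}(x)$, $|S| = \ell$, fix a ``swap set'' $S' \subset [n] \setminus (\mathrm{supp}(x) \cup T)$ of size $\ell$, and for each $I \subset [\ell]$ let $v^{(I)}$ be the vector obtained by transposing the $I$-indexed coordinates between $S$ and $S'$. This produces an orbit of $2^\ell$ vectors with $v^{(\emptyset)} = v \in B_\ell$ and $v^{(I)} \in A$ for every $I \ne \emptyset$. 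Each swap is a coordinate transposition that preserves the law of $W$, so the joint distribution of $\xi_I := \exp\!\big(\beta v^{(I)\top} W v^{(I)}\big)$ is invariant under the shifts $I \mapsto I \Delta J$; setting $T(v) := \sum_I \xi_I$, this equidistribution forces all $\xi_I/T(v)$ to share a common distribution, yielding the clean identity $\E{\xi_v/T(v)} = 1/2^\ell$. Markov's inequality then gives $\Pr\!\big(T(v) \ge 2^{\ell/2}\,\xi_v\big) \ge 1 - 2^{-\ell/2}$ per orbit.

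The main obstacle is aggregating these per-orbit bounds into the global ratio. I would choose the swap sets $S'$ so that the orbits attached to distinct $v \in B_\ell$ are pairwise disjoint (feasible under $k' \le n^{1-\delta}$, which leaves ample room in the complement of $\mathrm{supp}(x) \cup T$ for a fresh choice of $S'$ per $v$), so that $Z(A) \ge \sum_v (T(v) - \xi_v)$ without double counting. Applying Markov's inequality to the global sum $\sum_v \xi_v/T(v)$, whose mean is $|B_\ell'|/2^\ell$, bounds it by $|B_\ell'|/2^{(\ell+2)/2}$ with probability at least $1 - 2^{-(\ell-2)/2}$; combining with the counting slack $|A|/|B| \gtrsim 2^\ell$ via the Cauchy--Schwarz / AM--HM identity $|B_\ell'|^2 \le (\sum_v \xi_v/T(v))(\sum_v T(v)/\xi_v)$ should deliver the required ratio bound. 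The subtle point is the passage from this ``sum of ratios'' control to a genuine ``ratio of sums'' statement for $Z(B)/Z(A)$: positive correlations between orbits could in principle concentrate $Z(B)$'s mass on a small set of atypical orbits whose $\xi_v/T(v)$ is large, and it is the comfortable extra factor $2^{\ell/2}$ of counting headroom between $|A|/|B| \gtrsim 2^\ell$ and the target $2^{(\ell-2)/2}$ that lets the soft aggregation absorb this loss.
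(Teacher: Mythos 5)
Your opening reduction --- peeling off the signal term to get $D_{\beta,\ell}\ge -\frac{4\beta\lambda}{k}\ell^2+\log\frac{\tilde Z_\beta(A)}{\tilde Z_\beta(B)}$ with $\tilde Z_\beta$ built from the pure-noise Hamiltonian --- is exactly the paper's first step, and your counting bound showing $|A^c|/\binom{n}{k'}\le 2^{1-\ell}$ (via the hypergeometric ratio and the hypothesis $\ell\ge 2ek(k'/n)^{1-\delta}$) matches the paper's \eqref{eq:A-small}. The genuine gap is in the step you yourself flag: passing from per-orbit control of $\xi_v/T(v)$ to a bound on the ratio of sums $\tilde Z_\beta(B)/\tilde Z_\beta(A)$. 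Writing $\frac{\sum_v\xi_v}{\sum_v T(v)}=\sum_v\frac{T(v)}{\sum_u T(u)}\cdot\frac{\xi_v}{T(v)}$, this is a weighted average of the per-orbit ratios with random weights that can correlate positively with those ratios; neither Markov applied to $\sum_v \xi_v/T(v)$ nor the Cauchy--Schwarz/AM--HM identity you invoke controls such a weighted average, and the $2^{\ell/2}$ of counting headroom does not obviously absorb the loss (a union bound over the roughly $\binom{k}{\ell}\binom{n-k}{k'-\ell}$ orbits is hopeless, since that count vastly exceeds $2^{\ell}$). There are also two secondary unproved claims: that the $2^\ell$-element orbits attached to distinct $v\in B_\ell$ can actually be made pairwise disjoint, and that the strata $B_m$ with $\ell<m\le 2\ell$ can be discarded pointwise (for the given realization of $W$) rather than merely in expectation.

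The fix is far simpler than your orbit construction and is the one missing idea: normalize by the \emph{full} partition function rather than by per-orbit sums. Since the law of $W$ is invariant under coordinate permutations, which act transitively on $S_{k'}$, one has $\Ex_W[\tilde\mu_\beta(\{v\})]=1/\binom{n}{k'}$ for every $v\in S_{k'}$, hence $\Ex_W[\tilde\mu_\beta(A^c)]=|A^c|/\binom{n}{k'}\le 2^{1-\ell}$ \emph{exactly} --- no orbit decomposition, no disjointness, no aggregation. A single application of Markov's inequality with $\gamma=2^{-(\ell-2)/2}$ gives $\tilde\mu_\beta(A^c)\le 2^{1-\ell}/\gamma$ with probability at least $1-\gamma$, and the chain $\frac{\tilde Z_\beta(A)}{\tilde Z_\beta(B)}\ge\frac{\tilde Z_\beta(A)}{\tilde Z_\beta(A^c)}=\frac{1-\tilde\mu_\beta(A^c)}{\tilde\mu_\beta(A^c)}$ finishes the proof. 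Your symmetrization instinct is the right one, but it is applied to too small a group orbit; using the full symmetric group makes the expectation computation exact and sidesteps the ratio-of-sums obstruction entirely.
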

The requirement $\ell \ge 2ek(k'/n)^{1-\delta}$ is not restrictive because an informative $\ell$ should satisfy $\ell \gg kk'/n$ so that a uniformly random vector $v \in S_{k'}$ lands in the set $A$ (from Definition~\ref{def:FEW}) with high probability (see~\eqref{eq:ell-good}). More specifically, $\ell \ge 2ek(k'/n)^{1-\delta}$ implies $|A|/{n \choose k'} \ge 1 - 2^{1-\ell}$ (see~\eqref{eq:A-small}).

We now specialize to our specific range of informative $\ell$ values~\eqref{eq:ell-good} and informative $k'$ values~\eqref{eq:k-good} to obtain our main result. The proof can be found in Section~\ref{sec:pf-high-temp}.

\begin{corollary}\label{cor:hightemp-final}
Fix a constant $\delta > 0$ and suppose $n \ge n_0(\delta)$ and $k' \le n^{1-\delta}$. Also suppose $\min\{k,k'\} \ge \frac{k}{\lambda}\sqrt{\frac{k'}{n}}$ and $\lambda \le \min\left\{n^{-\delta} \frac{k}{2} \sqrt{\frac{k'}{n}}, \frac{1}{4e} \left(\frac{n}{k'}\right)^{1/2-\delta}\right\}$. For any $\beta$ satisfying
\[ 0 \le \beta \le \frac{\log 2}{16} n^{-\delta} \min\left\{\frac{k}{\lambda},\frac{n}{2e\lambda k'}\right\} \]
there exists $\ell$ satisfying
\[ \max\left\{n^\delta,2ek(k'/n)^{1-\delta}\right\} \le \ell \le \frac{k}{2\lambda}\sqrt{\frac{k'}{n}} \le \frac{1}{2} \min\{k,k'\} \]
such that with probability at least $1-2^{-(\ell-2)/2} \ge 1-2^{-(n^\delta-2)/2}$,
\[ D_{\beta,\ell} \ge \frac{\log 2}{8} \min\left\{\frac{\log 2}{8} \frac{k}{\beta \lambda}, \frac{k}{\lambda} \sqrt{\frac{k'}{n}} \right\} - \log 2. \]
\end{corollary}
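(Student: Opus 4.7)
\textbf{Proof proposal for Corollary~\ref{cor:hightemp-final}.}

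The plan is to apply Theorem~\ref{thm:ent-depth} with the largest admissible $\ell$ for which the negative quadratic term in~\eqref{eq:D-result} is dominated by half the linear term, then read off the resulting bound. The key observation is that whenever $\ell \le \frac{k \log 2}{16 \beta \lambda}$, one has $\frac{4\beta\lambda}{k}\ell^{2} \le \frac{\log 2}{4}\ell$, so Theorem~\ref{thm:ent-depth} yields the cleaner inequality
\[
D_{\beta,\ell} \;\ge\; \tfrac{\log 2}{4}\,\ell \;-\; \log 2.
\]
Thus I will choose
\[
\ell \;:=\; \min\!\left\{\frac{k \log 2}{16 \beta \lambda},\; \frac{k}{2\lambda}\sqrt{\frac{k'}{n}}\right\}
\]
(interpreting the first term as $+\infty$ when $\beta=0$), which gives directly
\[
D_{\beta,\ell} \;\ge\; \tfrac{\log 2}{8}\min\!\left\{\tfrac{\log 2}{8}\tfrac{k}{\beta\lambda},\;\tfrac{k}{\lambda}\sqrt{\tfrac{k'}{n}}\right\} - \log 2,
\]
matching the target estimate.

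Next I would verify that this $\ell$ satisfies the hypotheses of Theorem~\ref{thm:ent-depth} and the range claimed in the corollary. For the upper bound $2\ell \le \min\{k,k'\}$: by construction $\ell \le \frac{k}{2\lambda}\sqrt{k'/n}$, and the informativeness assumption $\min\{k,k'\}\ge \frac{k}{\lambda}\sqrt{k'/n}$ gives exactly $2\ell \le \min\{k,k'\}$. For the lower bound $\ell \ge \max\{n^\delta,\,2ek(k'/n)^{1-\delta}\}$, I would check separately that each of the two candidates in the minimum exceeds both thresholds:
\begin{itemize}
\item The bound $\frac{k}{2\lambda}\sqrt{k'/n}\ge n^\delta$ is equivalent to $\lambda \le n^{-\delta}\frac{k}{2}\sqrt{k'/n}$, and $\frac{k}{2\lambda}\sqrt{k'/n}\ge 2ek(k'/n)^{1-\delta}$ is equivalent to $\lambda \le \frac{1}{4e}(n/k')^{1/2-\delta}$; both are hypotheses of the corollary.
\item The bound $\frac{k\log 2}{16\beta\lambda}\ge n^\delta$ is equivalent to $\beta \le \frac{\log 2}{16}n^{-\delta}\frac{k}{\lambda}$, and $\frac{k\log 2}{16\beta\lambda}\ge 2ek(k'/n)^{1-\delta}$ is equivalent to $\beta \le \frac{\log 2}{32 e \lambda}(n/k')^{1-\delta}$. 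The first is part of the assumed upper bound on $\beta$; the second follows from the other part $\beta\le \frac{\log 2}{16}n^{-\delta}\frac{n}{2e\lambda k'}$ using $k'^{\,\delta}\ge 1$ (so $(n/k')^{1-\delta}\ge n^{1-\delta}/k'$).
\end{itemize}
Hence $\ell\ge \max\{n^\delta,\,2ek(k'/n)^{1-\delta}\}$, so Theorem~\ref{thm:ent-depth} applies and its probability bound $1-2^{-(\ell-2)/2}$ is at least $1-2^{-(n^\delta-2)/2}$ since $\ell\ge n^\delta$. Finally, $\frac{k}{2\lambda}\sqrt{k'/n}\le \tfrac12 \min\{k,k'\}$ is again the informativeness assumption, closing the range of $\ell$ stated in the corollary.

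There is no genuine obstacle here: the corollary is a direct optimization of Theorem~\ref{thm:ent-depth} in $\ell$, and the two constraints on $\beta$ (and the two on $\lambda$) in the hypotheses are precisely engineered so that the natural optimum $\ell^\star = \frac{k\log 2}{16\beta\lambda}$, capped at the largest informative value $\frac{k}{2\lambda}\sqrt{k'/n}$, lies in the admissible window $[\max\{n^\delta,2ek(k'/n)^{1-\delta}\},\,\tfrac12\min\{k,k'\}]$. The only careful point is the mild algebraic gap between $n^{1-\delta}/k'$ and $(n/k')^{1-\delta}$ in checking the second constraint from $\beta$, which is absorbed by $k'\ge 1$.
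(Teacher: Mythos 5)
Your proof is correct and follows essentially the same approach as the paper: the paper first establishes an intermediate statement (Corollary~\ref{cor:ent-depth}) via a two-case split on $\beta$ relative to $B_2 = \frac{\log 2}{16}\frac{k}{\lambda L_2}$, then specializes $L_1 = \max\{n^\delta, 2ek(k'/n)^{1-\delta}\}$ and $L_2 = \frac{k}{2\lambda}\sqrt{k'/n}$; your choice $\ell = \min\{\frac{k\log 2}{16\beta\lambda}, \frac{k}{2\lambda}\sqrt{k'/n}\}$ is exactly the same optimizer, just written as a single $\min$ rather than a case split, and your verifications of the admissibility constraints (including the mild gap between $n^{1-\delta}/k'$ and $(n/k')^{1-\delta}$, absorbed by $k'\ge 1$) match the hypotheses precisely.
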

Note that since $1 \le k' \le n$, the condition on $\lambda$ is implied by our regime for $\lambda$, namely $\lambda \ll \min\{1,k/\sqrt{n}\}$.

\subsection{Discussion and Future Directions}

We highlight a few conceptual contributions of the present work to the study of the solution space of inference problems. 
\begin{itemize}
\item First, by combining two different lower bounds we showed the existence of free energy wells at \emph{all} values of the inverse temperature $\beta$, possibly depending on $n$. In contrast, prior work (e.g., \cite{gamarnikzadik2,gamarnik2019landscape,OGP-submatrix,alg-tensor-pca}) has often been restricted to only some regime of temperature values, sometimes missing the \emph{Bayesian temperature} (which is a particularly natural choice of temperature, as the Gibbs measure corresponds to the posterior distribution).
\item Second, we investigated not just the threshold between ``easy'' (polynomial time) and ``hard'', but the precise subexponential runtime throughout the hard regime. This required precise bounds on the depth of free energy wells instead of simply the existence of free energy wells. Interestingly, these bounds coincided with the conjectured runtime obtained from the low-degree likelihood ratio. The extent to which the connection between the depth of free energy wells and optimal runtime extends beyond the sparse PCA model, remains an interesting question for future work.
\end{itemize}

Some directions for future work are as follows. One question is whether the condition $\lambda \ll (k/n)^{1/4}$ can be removed from our main result. Another unexplored direction is whether it is possible to give positive results for MCMC algorithms. While we have shown (in a large parameter regime) that MCMC cannot improve upon the existing algorithms of \cite{ding-subexp,anytime-pca}, it remains unclear (aside from the trivial case in Remark~\ref{rem:tight}) whether MCMC methods can match this performance (and if so, for which parameters $k', \beta$). It seems that we are currently lacking tools to prove algorithmic results for MCMC methods applied to high-dimensional inference problems. For example, the influential work of \cite{jerrum} showed that MCMC methods fail to find large cliques in random graphs, giving perhaps the first concrete evidence for the famous statistical-to-computational gap in the planted clique problem; however, we are still lacking a matching upper bound showing that MCMC can indeed find the clique in the ``easy'' regime where other polynomial-time methods are known. One success in showing positive results for MCMC (or rather for the continuous-time analogue, \emph{Langevin dynamics}), is the ``bounding flows'' method \cite{bounding-flows} used by \cite{alg-tensor-pca} for the tensor PCA problem. Note that in this case, MCMC methods (and conjecturally, all ``local'' algorithms) have strictly weaker performance than the best known polynomial-time algorithms (see \cite{alg-tensor-pca}).

\section{Auxiliary Low Temperature Results}\label{sec:lowLemmmas}

This section states several results on the structure of the solution space of $\Phi_{k'}$ as defined in \eqref{phik'}. These results are later used to establish Theorems \ref{thm:FEWLow} and \ref{thm:OGPLow}.

We denote by $\phi_{k'}$ the optimal value of $\Phi_{k'}$. We define the following restricted version of $\Phi_{k'}$ based on the overlap between the feasible vectors and the ground truth vector $x$. For $\ell=\floor*{kk'/n},\floor*{kk'/n}+1,\ldots,n$ we consider the optimization problems \[\Phi_{k'}(\ell): \min H(v)  \text{ s.t. } v \in \{0,1\}^n, \|v\|_0=k', \langle v,x \rangle=\ell \] and denote by $\phi_{k'}(\ell)$ the optimal value of $\Phi_{k'}(\ell)$.  Notice that we consider only the values of overlap starting from $\floor{kk'/n}$, since that level of overlap can be achieved by a uniformly at random chosen $k'$-sparse binary vector.

Our main technical tool is a tight deterministic approximation of the function $\phi_{k'}(\ell)$ for a wide range of the values of $\ell$ by a deterministic quantity $\Gamma_{k'}\left(\ell\right)$. The result follows from a careful application of the first and second moment method. The quantity is defined as follows.
\begin{definition}
We define the curve $\Gamma_{k'}: \{\floor*{kk'/n},\floor*{kk'/n}+1,\ldots,\min\{k',k\}\} \rightarrow \mathbb{R},$ given by \begin{align}  \Gamma_{k'}\left(\ell\right):=-\lambda \frac{\ell^2}{k}- 2k'\sqrt{\frac{1}{n} \log \left[ \binom{k}{\ell} \binom{n-k}{k'-\ell} \right]}. \end{align} 
\end{definition} 

The approximation result is as follows.
\begin{theorem}\label{FM} Let $n,k,k'\in \mathbb{N}$ with $\max\{k,k'\} \leq n$ and $n \rightarrow +\infty.$ 
\begin{itemize}
    \item[(1)]  Suppose $\alpha_n>0,n \in \mathbb{N}$ be a sequence with $\lim_{n \rightarrow + \infty} \frac{\alpha_n}{\log \left( \min\{k',k\} \right)}=+\infty$. The following is true with high probability as $n \rightarrow +\infty$.
If $\ell \in \mathbb{N}$ with $\floor*{kk'/n} \leq \ell \leq \min\{k',k\}$, then it holds
\begin{align} 
\phi_{k'}(\ell) \geq \Gamma_{k'}\left(\ell\right)- \sqrt{\frac{(k')^2}{n \log \left( \binom{k}{\ell} \binom{n-k}{k'-\ell} \right)}}\alpha_n.
\end{align}

    \item[(2)] Suppose $k'=o\left(n\right),$ $k=o\left(n\right)$, and $k,k'=\omega\left( \sqrt{\log n}\right).$ There exist universal constants $c,C>0$ such that the following holds with high probability as $n \rightarrow +\infty$.  If $\ell \in \mathbb{N}$ with $\floor{kk'/n} \leq \ell \leq c \min \{k',k\}$ then

\begin{align}\label{phiK}
    \phi_{k'}(\ell) \leq \Gamma_{k'}\left(\ell\right)+C\sqrt{\frac{k'\log n}{n} \max\left\{\frac{ \ell^4}{k^2}, \frac{(k')^4}{n^2},k' \right\}}.\end{align}
\end{itemize}
\end{theorem}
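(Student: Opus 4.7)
My plan is to reduce both parts to a deviation analysis of the centered Gaussian process $v \mapsto v^\top W v$ restricted to the level set $S_{k'}(\ell) := \{v \in S_{k'} : \langle v,x\rangle = \ell\}$. On $S_{k'}(\ell)$ we have $H(v) = -(\lambda/k)\ell^2 - v^\top W v$, so $\phi_{k'}(\ell) = -(\lambda/k)\ell^2 - \max_{v \in S_{k'}(\ell)} v^\top W v$. A direct computation using the GOE covariance yields $\mathrm{Var}(v^\top W v) = 2(k')^2/n$ and $\mathrm{Cov}(u^\top W u, v^\top W v) = 2\langle u,v\rangle^2/n$. Writing $N_\ell := \binom{k}{\ell}\binom{n-k}{k'-\ell} = |S_{k'}(\ell)|$, the target expression is exactly $\Gamma_{k'}(\ell) = -(\lambda/k)\ell^2 - 2k'\sqrt{\log N_\ell/n}$, so Part (1) is an upper deviation bound and Part (2) is a matching lower deviation bound on $\max v^\top W v$ around $2k'\sqrt{\log N_\ell/n}$.

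\textbf{Part (1) (union bound).} The Gaussian tail bound $\Pr(v^\top W v \geq \tau) \leq \exp(-\tau^2 n/(4(k')^2))$ together with a union bound over $v \in S_{k'}(\ell)$ gives, for $\tau = 2k'\sqrt{\log N_\ell/n} + s$, failure probability $\exp(-s\sqrt{n\log N_\ell}/k' - O(s^2 n/(k')^2))$. Choosing $s = (k'/\sqrt{n\log N_\ell})\,\alpha_n$ matches the error stated in the theorem with failure probability $\exp(-\alpha_n)$; a further union bound over the $O(\min\{k',k\})$ admissible values of $\ell$ closes via the hypothesis $\alpha_n/\log\min\{k',k\} \to \infty$.

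\textbf{Part (2) (truncated second moment).} Fix $\ell \leq c\min\{k',k\}$ and set $\tau_\ell := 2k'\sqrt{\log N_\ell/n} - s_\ell$ with slack $s_\ell$ equal (up to a constant) to the error on the right of \eqref{phiK}. Let $Z_\ell := |\{v \in S_{k'}(\ell) : v^\top W v \geq \tau_\ell\}|$. Gaussian asymptotics give $\mathbb{E}[Z_\ell] \to \infty$, so by Paley--Zygmund it suffices to show
\[
\frac{\mathbb{E}[Z_\ell^2]}{\mathbb{E}[Z_\ell]^2} \;=\; \sum_{m} \frac{N_\ell^{(m)}}{N_\ell}\,\frac{q_{m,\ell}}{q_{0,\ell}^2} \;=\; 1+o(1),
\]
where $N_\ell^{(m)}$ counts $v \in S_{k'}(\ell)$ with $\langle u,v\rangle = m$ for a fixed $u \in S_{k'}(\ell)$ --- a four-block hypergeometric sum over the partition of $[n]$ induced by the supports of $x$ and $u$ --- and $q_{m,\ell} \leq \exp(-\tau_\ell^2 n/(2(k')^2(1+\rho_m)))$ is the joint upper tail of two $N(0,2(k')^2/n)$ Gaussians with correlation $\rho_m = (m/k')^2$. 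In the ``bulk'' region $m \approx \bar m := \ell^2/k + (k'-\ell)^2/(n-k)$, the correlation $\rho_m$ is negligible and the summands contribute $1+o(1)$; once $\Pr(Z_\ell > 0) = 1-o(1)$ is obtained for each $\ell$, a final union bound over the admissible values of $\ell$ delivers the claim.

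\textbf{Main obstacle.} The hard part is the tail analysis: one must show that the entropy loss $-\log(N_\ell^{(m)}/N_\ell)$ strictly dominates the correlated-Gaussian gain $\log(q_{m,\ell}/q_{0,\ell}^2) \asymp (\tau_\ell^2 n/(2(k')^2))\,\rho_m/(1+\rho_m)$ for every $m \neq \bar m$. The extremal $m$'s fall into three regimes matching exactly the three terms inside the $\max$ of \eqref{phiK}: over-concentration of $u \cap v$ inside $S_x$ contributes $\asymp \ell^4/k^2$, over-concentration outside $S_x$ contributes $(k')^4/n^2$, and the diagonal $u=v$ contributes $k'$. Matching each of these after Stirling bounds on $N_\ell^{(m)}$ is precisely what forces the slack $s_\ell \asymp \sqrt{(k'\log n/n)\,\max\{\ell^4/k^2,(k')^4/n^2,k'\}}$; the restriction $\ell \leq c\min\{k',k\}$ keeps $\bar m$ bounded away from $k'$ so the bulk is cleanly separated from the $u = v$ contribution, and the assumption $k,k' = \omega(\sqrt{\log n})$ absorbs the sub-Gaussian correction terms from the Stirling expansion.
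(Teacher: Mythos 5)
Your Part~(1) is essentially the paper's argument: a Gaussian tail bound, a union bound over $v\in S_{k'}(\ell)$ (the paper absorbs the slack $\alpha_n$ under the square root via concavity rather than additively, but this is equivalent), and a final union bound over $\ell$ using $\alpha_n \gg \log\min\{k,k'\}$.

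Part~(2) is where the proposal diverges from the paper in a way that leaves a genuine gap. You lower the threshold to $\tau_\ell = 2k'\sqrt{\log N_\ell/n} - s_\ell$ and claim that with slack $s_\ell$ of the size in \eqref{phiK} one can show $\mathbb{E}[Z_\ell^2]/\mathbb{E}[Z_\ell]^2 = 1+o(1)$, after which Paley--Zygmund would give $\Pr(Z_\ell > 0) = 1-o(1)$. This is not what the paper proves, and it is not clear it is true. The paper keeps the threshold at $2k'\sqrt{\log N_\ell/n}$ exactly, accepts that $\mathbb{E}[Z_\ell]$ is \emph{small} ($\asymp 1/\sqrt{\log N_\ell}$) and hence the second moment ratio is \emph{polynomially large} --- their bound is $\mathbb{E}[Z_\ell^2]/\mathbb{E}[Z_\ell]^2 \le \exp\bigl(C_0 \max\{\ell^4/(k'k^2), (k')^3/n^2, 1\}\log n\bigr)$, whose exponent is at least $C_0\log n$ --- and then closes the argument with a Borell--TIS bootstrap: Paley--Zygmund gives $\Pr(\psi_{k'}(\ell) \ge 2k'\sqrt{\log N_\ell/n}) \ge \exp(-C_0\max\{\cdots\}\log n)$, Borell--TIS with $t^*$ chosen so that $\Pr(|\psi - \mathbb{E}\psi| \ge t^*) \le \exp(-2C_0\max\{\cdots\}\log n)$ is strictly smaller forces $\mathbb{E}\psi \ge 2k'\sqrt{\log N_\ell/n} - t^*$, and a second application of Borell--TIS gives concentration around the mean with explicit failure probability $\le 1/n^3$, which is summable over the $O(\min\{k,k'\})$ choices of $\ell$. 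Your proposal omits this concentration step entirely, yet it is the load-bearing part of the proof: without it you need the second moment ratio to be $1+o(1)$ at the shifted threshold, and a quick check against the paper's term-by-term bound on $A_{m}$ near $m = m_c \asymp \max\{\ell^2/k, (k')^2/n\}$ shows that the reduction in the exponent caused by the slack $s_\ell \asymp t^*$ is not large enough to tame $A_{m_c}$ (one would need $s_\ell \gtrsim (k')^2\log n / \sqrt{n\log N_\ell}$, which exceeds $t^*$ by a factor $\sqrt{k'}$ up to logs). Even granting the $1+o(1)$ claim, your final ``union bound over admissible $\ell$'' would require a quantitative failure rate $o(1/\min\{k,k'\})$ per $\ell$, i.e., a ratio of $1 + o(1/\min\{k,k'\})$, which is strictly stronger than $1+o(1)$ and is not addressed. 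The remedy is to insert the Borell--TIS concentration step: compute the second moment at the unshifted threshold (as you outline), get a possibly-large but controlled ratio, and use Gaussian concentration of the maximum to convert the tiny Paley--Zygmund probability into a high-probability lower bound at the shifted level.
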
The proof of part (1) of Theorem \ref{FM} can be found in Section \ref{sec:1MM} and the  proof of part (2) can be found in Section \ref{sec:2MM}.

\begin{remark}
Due to the wide range of the parameters $k,k',n,\lambda,\ell$ considered in the statement of Theorem \ref{FM}, no simplification is globally possible in the maximum of the three terms in \eqref{phiK}.
\end{remark}

Of distinct importance to us is the following corollary of Theorem \ref{FM}.

\begin{corollary}\label{cor:approx}   Let $n,k,k'\in \mathbb{N}$ with $\max\{k,k'\} \leq n$ and $n \rightarrow +\infty.$   Suppose $k'=o\left(n\right),$ $k=o\left(n\right)$, and $k,k'=\omega\left( \log n\right)$ and that Assumption \ref{assum:LambdaMain} and Assumption \ref{assum:k'Main} hold.
Let $\ell$ be such that  $\ell=O\left( \frac{k}{\lambda}\sqrt{ \frac{k'}{n} \log \frac{n}{k'}}\right).$ Then $$\max_{m=\floor{k'k/n},\floor{k'k/n}+1,\ldots,\ell} |\phi_{k'}\left(m \right)-\Gamma_{k'}\left(m\right)|=o\left(\frac{k'k}{\lambda n}\right),$$with high probability as $n \rightarrow + \infty.$
\end{corollary}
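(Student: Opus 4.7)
\textbf{Proof plan for Corollary \ref{cor:approx}.} The strategy is a direct sandwich argument: apply Theorem~\ref{FM}(1) for the lower bound on $\phi_{k'}(m)$ and Theorem~\ref{FM}(2) for the upper bound, each with a single high-probability event, and then verify that both error terms are of order $o(kk'/(\lambda n))$ uniformly in $m \in \{\lfloor kk'/n\rfloor,\ldots,\ell\}$. The range $m \leq \ell = O\!\bigl((k/\lambda)\sqrt{(k'/n)\log(n/k')}\bigr)$ satisfies $m \le c\min\{k,k'\}$ for $n$ large under Assumption~\ref{assum:k'Main} (it is, up to constants, the boosting threshold), so part~(2) of Theorem~\ref{FM} is applicable throughout the range.

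For the lower bound, fix $\alpha_n$ growing faster than $\log\min\{k,k'\}$ but slowly enough that $\alpha_n = o\!\bigl((k/\lambda)\sqrt{(k'/n)\log(n/k')}\bigr)$; Assumption~\ref{assum:k'Main}, equation \eqref{k'Assum1}, guarantees that this window is nonempty (it is precisely the quantitative form of $(k/\lambda)\sqrt{k'/n} \gg \log(n/k)/\sqrt{\log(\lambda n/(k\log(n/k)))}$, hence grows polylogarithmically faster than $\log\min\{k,k'\}$). For any $m$ in the stated range, $\binom{n-k}{k'-m} \gtrsim \binom{n-k}{k'/2}$, so $\log\bigl[\binom{k}{m}\binom{n-k}{k'-m}\bigr] \gtrsim k'\log(n/k')$, and the error bound from Theorem~\ref{FM}(1) is at most $\sqrt{k'/(n\log(n/k'))}\,\alpha_n$, which by the choice of $\alpha_n$ is $o(kk'/(\lambda n))$ uniformly in $m$.

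For the upper bound, the error term in \eqref{phiK} is controlled by the maximum of three quantities; each is handled by a different part of the assumptions. When $m^4/k^2$ dominates, using $m \le \ell$ the error is at most $(\ell^2/k)\sqrt{k'\log n/n}$, and the requirement that this be $o(kk'/(\lambda n))$ reduces, after substituting $\ell$, to $k' = o\!\bigl(\lambda^2 n/\log^3 n\bigr)$, which is the second part of \eqref{k'Assum1}. When $(k')^4/n^2$ dominates, the condition becomes $k' = o\!\bigl((k^2 n/(\lambda^2\log n))^{1/3}\bigr)$, which is precisely the technical assumption \eqref{addit_assum}. When $k'$ dominates, the error becomes $k'\sqrt{\log n/n}$, and requiring this to be $o(kk'/(\lambda n))$ reduces to $\lambda = o(k/\sqrt{n\log n})$, which follows from \eqref{assum:lambda} in Assumption~\ref{assum:LambdaMain}.

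Combining the two estimates with a union bound over the two high-probability events gives the conclusion. The main technical point is the case analysis in the previous paragraph: the three summands in the max of \eqref{phiK} are exactly what \eqref{k'Assum1}, \eqref{addit_assum}, and \eqref{assum:lambda} are designed to kill, and verifying the alignment between these three assumptions and the three error regimes is the only real work — the rest is a bookkeeping exercise involving the estimate $\log\binom{n-k}{k'-m} = \Theta(k'\log(n/k'))$ on the relevant range of $m$.
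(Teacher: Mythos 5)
Your proposal is correct and follows essentially the same route as the paper: invoke both parts of Theorem~\ref{FM}, lower-bound $\log\bigl[\binom{k}{m}\binom{n-k}{k'-m}\bigr]$ using $m=o(k')$, and match the three summands in the max of \eqref{phiK} against \eqref{k'Assum1}, \eqref{addit_assum}, and \eqref{assum:lambda} exactly as you describe. The only cosmetic difference is that the paper fixes $\alpha_n=(\log n)^2$ and uses the cruder bound $\log\bigl[\binom{k}{\ell}\binom{n-k}{k'-\ell}\bigr]\ge k'$, which adds a fourth term $\sqrt{k'/n}\,(\log n)^2$ handled again via $\lambda=o\bigl(k/(\sqrt{n}\log n)\bigr)$, whereas you keep $\alpha_n$ generic inside a nonempty window.
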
The proof of the corollary can be found in Appendix~\ref{app:approx}.

We study the monotonicity of the curve $\Gamma_{k'}.$ The following theorem holds.

\begin{theorem}\label{thm:FM2} Let  $\lambda>0$ and $k,k',n \in \mathbb{N}$ with $k',k \leq n$ and $k,k',n \rightarrow + \infty$ and $k',k=o\left(n\right).$ Under Assumption \ref{assum:LambdaMain} and Assumption \ref{assum:k'Main} the following hold.
  For $$\ell_c:=\frac{1}{2\lambda}k\sqrt{\frac{k'}{n  \log \left(\frac{n}{k'}\right)}} \log \left( \frac{1}{2\lambda}\sqrt{\frac{n}{k'  \log \left(\frac{n}{k'}\right)}}  \right),$$it holds $\ell_c=O\left(\frac{1}{\lambda}k\sqrt{\frac{k'}{n}\log \left(n\right)} \right)$ and moreover

\begin{align}\label{ineqov}
    \ell_c= \omega\left(\frac{k'k}{n}\right) \text{ and } \ell_c =o\left( \min\{k',k\}\right).
\end{align}
Furthermore, for arbitrary fixed $\delta \in (0,1)$ and arbitrarily small $\epsilon>0$ the following properties hold for sufficiently large values of $n$.
\begin{itemize}

    \item[(a)] If  $\floor{kk'/n} \leq \ell \leq \floor{\left(1-\delta \right)\ell_c}-1$, then
\begin{align}
\Gamma_{k'}(\ell+1)-\Gamma_{k'}(\ell)& \geq  \delta\left(1-\epsilon\right)  \sqrt{\frac{ k'}{n  \log \left(\frac{n}{k'}\right)}} \log \left(\frac{\left(\ell+1\right) n}{k' k }\right).
    \end{align} In particular, $\Gamma_{k'}$ is strictly increasing for these values of $\ell$.
    \item[(b)]  If  $\floor{kk'/n} \leq \ell$ with $\ell=o \left( \min\{k',k\}\right)$, then for large enough $n$,
\begin{align}
\Gamma_{k'}(\ell+1)-\Gamma_{k'}(\ell)& \leq  \left(1+\epsilon\right)  \sqrt{\frac{ k'}{n  \log \left(\frac{n}{k'}\right)}} \log \left(\frac{\left(\ell+1\right) n}{k' k }\right) \leq   \left(1+\epsilon\right) \sqrt{\frac{ k'}{n  }\log \left(\frac{n}{k'}\right)}.
    \end{align} 
    
    \item[(c)] For any $\ell \geq  10\ceil{\ell_c}-1 ,$
    
    \begin{align}
\Gamma_{k'}(\ell+1)-\Gamma_{k'}(\ell)& \leq  -\lambda \frac{\ell}{k}.
\end{align}
    
\item[(d)] Suppose for some $0<\delta_0<\delta_1<1$ it holds $\delta \in (\delta_0,\delta_1)$. Then for some constant $C>10$ and $D=D(\delta_0,\delta_1)>0$ depending only on $\delta_0$ and $\delta_1$, for any $\delta \in (\delta_0,\delta_1)$ it holds

$$ \Gamma_{k'}\left(\floor{\left(1-\delta \right)\ell_c}\right) - \mathrm{Gap}_n \geq \max\{\Gamma_{k'}\left(\floor{k'k/n} \right),\Gamma_{k'}\left(C\ceil{\ell_c}\right)\} $$for
\begin{align} \label{gap}
    \mathrm{Gap}_n:= D \frac{k'k}{\lambda n}.
    \end{align}

\end{itemize}
\end{theorem}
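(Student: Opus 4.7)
The theorem is a calculus statement about the discrete derivative
\[
\Delta(\ell) \;:=\; \Gamma_{k'}(\ell+1)-\Gamma_{k'}(\ell) \;=\; -\frac{\lambda(2\ell+1)}{k} \;-\; \frac{2k'}{\sqrt n}\bigl(\sqrt{L(\ell+1)}-\sqrt{L(\ell)}\bigr),
\]
with $L(\ell):=\log\binom{k}{\ell}\binom{n-k}{k'-\ell}$. Rationalizing $\sqrt{L(\ell+1)}-\sqrt{L(\ell)}=(L(\ell+1)-L(\ell))/(\sqrt{L(\ell+1)}+\sqrt{L(\ell)})$ and using $L(\ell+1)-L(\ell)=\log\frac{(k-\ell)(k'-\ell)}{(\ell+1)(n-k-k'+\ell+1)}$, elementary estimates in the range $\ell=o(\min\{k,k'\})$ yield $L(\ell+1)-L(\ell)=-(1+o(1))\log\frac{(\ell+1)n}{kk'}$ and $L(\ell)=(1+o(1))\,k'\log(n/k')$, where the entropy dominance $\log\binom{n-k}{k'-\ell}\gg\log\binom{k}{\ell}$ relies on $k,k'=o(n)$ together with assumption~\eqref{addit_assum} at the upper end of our telescoping range. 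The conclusion is the clean formula
\[
\Delta(\ell)\;=\;(1+o(1))\sqrt{\frac{k'}{n\log(n/k')}}\,\log\frac{(\ell+1)n}{kk'}\;-\;(1+o(1))\,\frac{2\lambda\ell}{k},
\]
uniformly in $\ell=o(\min\{k,k'\})$. Setting $v:=\frac{1}{2\lambda}\sqrt{n/(k'\log(n/k'))}$ and $L:=\log v$, the definition of $\ell_c$ is equivalent to $\ell_c\,n/(kk')=vL$, so $\log(\ell_c n/(kk'))=L+\log L$ and $\ell_c$ is, to leading order, the point where the two terms in $\Delta$ balance (the residual $\log L$ reflects the iteration needed to solve the transcendental balance equation and controls parts (a)--(c)).

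First I would verify \eqref{ineqov} and the $O$-estimate. The ratio $\ell_c/(kk'/n)=vL=\omega(1)$ holds by Assumptions~\ref{assum:LambdaMain}--\ref{assum:k'Main}, giving $\ell_c=\omega(kk'/n)$; $\ell_c/k=O(\sqrt{k'\log n/n}/\lambda)=o(1)$ by the upper bound $k'=o(\lambda^2 n/\log^3 n)$ in \eqref{k'Assum1}; and $\ell_c/k'=o(1)$ by the lower bound on $k'$ in \eqref{k'Assum1}. A convenient book-keeping lemma I would establish once is $L=\Theta(\log(n/k'))$: indeed, $k'=o(\lambda^2 n/\log^3 n)$ forces $\log(n/k')\ge 2\log(1/\lambda)+\omega(1)$, and from $L=\log(1/(2\lambda))+\tfrac12\log(n/k')-\tfrac12\log\log(n/k')$ one then gets both $L\le\log(n/k')$ and $L\ge\tfrac12\log(n/k')-O(\log\log(n/k'))$. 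Inserting $L\le\log(n/k')\le\log n$ into $\ell_c=(k/(2\lambda))\sqrt{k'/n}\cdot L/\sqrt{\log(n/k')}$ yields $\ell_c=O((k/\lambda)\sqrt{k'\log n/n})$.

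Parts (a)--(c) then follow from the two-term formula. For (a) I split on whether $\log\frac{(\ell+1)n}{kk'}\ge L$. If yes, $\ell$ is comparable to $\ell_c$ and the bound $2\lambda\ell/k\le(1-\delta)\sqrt{k'/(n\log(n/k'))}\,L$ is at most $(1-\delta)$ times the entropy term at $\ell$, so subtracting gives $\Delta(\ell)\ge\delta(1-\epsilon)$ times that entropy term directly. If no, then $\ell<\ell_c/L$ and a short calculation using $k'=o(\lambda^2 n/\log^3 n)$ shows the energy term is of strictly smaller order than $\sqrt{k'/(n\log(n/k'))}$, so $\Delta(\ell)$ equals the entropy term up to $(1+o(1))$, comfortably above $\delta(1-\epsilon)$ times it. Part (b) is the matching upper bound: drop the nonpositive energy term, insert the two-term asymptotic, and use $(\ell+1)n/(kk')\le n/k'$ on the informative range. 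Part (c) is dual to (a): for $\ell\ge 10\ell_c-1$ one has $2\lambda\ell/k\ge 10\sqrt{k'/(n\log(n/k'))}L$ while the entropy term is at most $(1+o(1))\sqrt{k'/(n\log(n/k'))}(L+\log L+\log(\ell/\ell_c))$, and a case split on whether $\ell\le C'\ell_c$ or $\ell$ is truly large (where in the latter case one reverts to the exact formula and uses $\ell_c = o(\min\{k,k'\})$) shows the surplus beyond $\lambda\ell/k$ is negative.

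The main step, and the hardest one, is part (d), a quantitative telescoping. Summing the lower bound from (a) over $\ell\in[\rho\ell_c,\floor{(1-\delta)\ell_c}-1]$ for some fixed $\rho\in(0,1-\delta_1)$ yields an accumulated rise $\Gamma_{k'}(\floor{(1-\delta)\ell_c})-\Gamma_{k'}(\rho\ell_c)=\Omega(\sqrt{k'/(n\log(n/k'))}\cdot\ell_c\cdot L)=\Omega(kk'L^2/(\lambda n\log(n/k')))=\Omega(kk'/(\lambda n))$ by the lemma $L=\Theta(\log(n/k'))$; monotonicity on $[\floor{kk'/n},\rho\ell_c]$ via (a) then produces the desired gap against $\Gamma_{k'}(\floor{kk'/n})$. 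For the other side, part (c) gives $\Gamma_{k'}(10\ell_c)-\Gamma_{k'}(C\ell_c)\ge\sum_{\ell=10\ell_c}^{C\ell_c-1}\lambda\ell/k=\Theta(\lambda C^2\ell_c^2/k)=\Theta(C^2\cdot kk'/(\lambda n))$, while part (b) upper bounds the contribution of $[(1-\delta)\ell_c,10\ell_c]$ by $O(\ell_c\sqrt{k'/(n\log(n/k'))}\cdot L)=O(kk'/(\lambda n))$; choosing $C$ sufficiently large in terms of $(\delta_0,\delta_1)$ produces a universal gap $D(\delta_0,\delta_1)\cdot kk'/(\lambda n)$. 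The principal technical difficulty is uniformity: the $(1+o(1))$ errors in the two-term formula depend on $\ell$, and carrying them through a telescoping of length $\Theta(\ell_c)$ requires a uniform bound -- this is precisely the role of assumption~\eqref{addit_assum}, which keeps $\ell_c$ small enough that the entropy dominance $L(\ell)\sim k'\log(n/k')$ persists throughout.
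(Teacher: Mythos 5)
Your proposal follows essentially the same route as the paper: the same two-term asymptotic for the discrete derivative $\Gamma_{k'}(\ell+1)-\Gamma_{k'}(\ell)$ obtained by rationalizing the square-root difference and using $\log\binom{m_1}{m_2}\sim m_2\log(m_1/m_2)$, the same balancing of the energy and entropy terms at the critical scale $\ell_c$ for parts (a)--(c), and the same telescoping for part (d) (accumulating the rise over a constant fraction of $[\lfloor kk'/n\rfloor,(1-\delta)\ell_c]$ and the fall over $[10\lceil\ell_c\rceil,C\lceil\ell_c\rceil]$ via (c), with (b) controlling the intermediate stretch). The one inaccuracy is your attribution of the uniform entropy dominance $L(\ell)\sim k'\log(n/k')$ to assumption~\eqref{addit_assum}: in the paper that assumption plays no role in this theorem (it enters only in Corollary~\ref{cor:approx}, to control the second-moment error term); here the dominance follows from $\ell_c=o(\min\{k,k'\})$, which is derived from~\eqref{k'Assum1} and Assumption~\ref{assum:LambdaMain}.
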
The proof of Theorem \ref{thm:FM2} can be found in Section \ref{Sec:AnalysisFM}.

The connection between the OGP and the optimization of $\Phi_{k'}$, and in particular the values $\phi_{k'}(\ell),\ell=\floor{k'k/n},\ldots,n,$ is established using the following simple proposition.

\begin{proposition}\label{prop:OGP}
Suppose that for some overlap sizes $0 \leq \ell_1 \leq z_{1}<z_{2}-1<z_2 \leq \ell_2 \leq k$ it holds $$\max\{ \phi_{k'}\left(\ell_1\right),\phi_{k'}\left(\ell_2\right)\} <\min_{\ell \in (z_{1},z_{2})} \phi_{k'}\left(\ell\right).$$with high probability as $n \rightarrow +\infty$. Then $\Phi_{k'}$ exhibits the $k'$-OGP with choice of $\zeta_{1,n}=z_1$ and $\zeta_{2,n}=z_2.$ 
\end{proposition}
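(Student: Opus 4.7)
The plan is to verify Definition \ref{def:OGP} directly, by taking $\zeta_{1,n} = z_1$, $\zeta_{2,n} = z_2$, and choosing the threshold $r_n$ to be any real number strictly between $\max\{\phi_{k'}(\ell_1), \phi_{k'}(\ell_2)\}$ and $\min_{\ell \in (z_1,z_2)} \phi_{k'}(\ell)$. Such an $r_n$ exists on the event of the hypothesis (for concreteness one may take the average of the two quantities), and the analysis below is purely deterministic conditional on that event, so the conclusion holds with high probability.

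For condition (1) of Definition \ref{def:OGP}, let $v^{(1)}$ and $v^{(2)}$ be minimizers of the restricted problems $\Phi_{k'}(\ell_1)$ and $\Phi_{k'}(\ell_2)$ respectively; these minimizers exist because each restricted problem is an optimization over a finite nonempty subset of $S_{k'}$ (the range $\ell_1,\ell_2 \in [\lfloor kk'/n \rfloor, \min\{k,k'\}]$ is exactly the set of feasible overlaps). By construction, $\langle v^{(1)}, x \rangle = \ell_1 \le z_1 = \zeta_{1,n}$ and $\langle v^{(2)}, x \rangle = \ell_2 \ge z_2 = \zeta_{2,n}$, while $H(v^{(1)}) = \phi_{k'}(\ell_1) \le r_n$ and $H(v^{(2)}) = \phi_{k'}(\ell_2) \le r_n$. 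So the choice $v := v^{(1)}$ and $w := v^{(2)}$ witnesses condition (1).

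For condition (2), suppose toward a contradiction that some $v \in S_{k'}$ satisfies $H(v) \le r_n$ together with $z_1 < \langle v, x\rangle < z_2$. Let $m := \langle v, x\rangle \in \{z_1+1,\ldots,z_2-1\}$; this set is nonempty because the hypothesis $z_1 < z_2 - 1$ gives $z_2 - z_1 \ge 2$. Since $v$ is feasible for $\Phi_{k'}(m)$, we get
\[
H(v) \;\ge\; \phi_{k'}(m) \;\ge\; \min_{\ell \in (z_1,z_2)} \phi_{k'}(\ell) \;>\; r_n,
\]
contradicting $H(v) \le r_n$. Therefore every $v \in S_{k'}$ with $H(v) \le r_n$ must satisfy $\langle v,x\rangle \le z_1$ or $\langle v,x\rangle \ge z_2$, which is exactly condition (2).

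There is no substantive obstacle in this proposition: it is a direct translation between the restricted-optimum description of the landscape and the OGP as formulated in Definition \ref{def:OGP}. All of the real work is pushed into the hypothesis itself, namely into establishing the strict separation $\max\{\phi_{k'}(\ell_1),\phi_{k'}(\ell_2)\} < \min_{\ell \in (z_1,z_2)} \phi_{k'}(\ell)$ with high probability, which is what Theorems \ref{FM} and \ref{thm:FM2} (via Corollary \ref{cor:approx} and the monotonicity/gap properties of $\Gamma_{k'}$) are designed to deliver. The only point to be mindful of is matching the gap size: the hypothesis $z_1 < z_2 - 1$ ensures the open interval $(z_1,z_2)$ contains at least one integer, which is all that the argument above uses; the larger gap required in applications (so that Definition \ref{def:OGP}'s condition $\zeta_{2,n} > \zeta_{1,n}+2$ holds and the resulting OGP rules out 2-coordinate local moves) will be supplied by the choice of $z_1,z_2$ made when invoking the proposition.
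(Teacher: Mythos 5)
Your proof is correct and follows essentially the same route as the paper's: pick $r_n$ strictly between $\max\{\phi_{k'}(\ell_1),\phi_{k'}(\ell_2)\}$ and $\min_{\ell\in(z_1,z_2)}\phi_{k'}(\ell)$, witness condition (1) with the minimizers of the restricted problems $\Phi_{k'}(\ell_1)$ and $\Phi_{k'}(\ell_2)$, and derive condition (2) from the definition of $\phi_{k'}(m)$ as a restricted minimum. Your closing remark about the gap condition $\zeta_{2,n}>\zeta_{1,n}+2$ versus the hypothesis $z_2>z_1+1$ is a fair observation that the paper itself also leaves to the application of the proposition.
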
 

The proof of Proposition \ref{prop:OGP} can be found in Appendix~\ref{app:OGP}.

The connection between free energy wells and the optimization of $\Phi_{k'}$ is established using the following simple proposition.

\begin{proposition}\label{prop:FEW}
Let $\lambda>0, \beta>0$ and $k,k',n, \ell \in \mathbb{N}$ with $k',k \leq n$ and $\floor{\frac{k'k}{n}} \leq \ell \leq \frac{k}{2}$. It holds
\begin{align}
\bigg{|}D_{\beta,\ell}-\beta \left[\min_{m=\floor{\frac{k'k}{n}},\floor{\frac{k'k}{n}}+1,\ldots,\ell} \phi_{k'}(m)-\min_{m=\ell,\ell+1,\ldots,2\ell} \phi_{k'}(m) \right] \bigg{|}\leq \log \binom{n}{k'}.
\end{align}
\end{proposition}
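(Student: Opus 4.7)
The proof is essentially a one-step log-sum-exp estimate. I would first use that the normalizing partition function $Z_\beta$ cancels in $D_{\beta,\ell}$, so
\[
D_{\beta,\ell} = \log\mu_\beta(A) - \log\mu_\beta(B) = \log\sum_{v\in A}e^{-\beta H(v)} - \log\sum_{v\in B}e^{-\beta H(v)}.
\]
Next I would apply the elementary sandwich, valid for any nonempty finite set $S$ of reals $\{a_v\}_{v\in S}$:
\[
\max_{v\in S} a_v \;\le\; \log\sum_{v\in S}e^{a_v} \;\le\; \max_{v\in S} a_v + \log|S|.
\]
Setting $a_v = -\beta H(v)$ and using $\beta\ge 0$, the relevant max is $-\beta\min_{v\in S} H(v)$.

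The key observation is then to rewrite $\min_{v\in S}H(v)$ in terms of $\phi_{k'}$ by stratifying $S$ along the overlap $m=\langle v,x\rangle$: since each overlap slice has minimum-energy value $\phi_{k'}(m)$ by definition, the minimum of $H$ over $S$ equals the minimum of $\phi_{k'}(m)$ over those integer overlaps $m$ compatible with membership in $S$. Applied to $A=\{v\in S_{k'}: \langle v,x\rangle<\ell\}$ and $B=\{v\in S_{k'}: \ell\le \langle v,x\rangle\le 2\ell\}$, subtracting the two sandwich inequalities for the log-sums over $A$ and $B$ yields the bound in the statement, with additive error at most $\max(\log|A|,\log|B|)\le \log\binom{n}{k'}$.

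The one minor bookkeeping point is that the proposition starts the $A$-side minimum at $m=\lfloor kk'/n\rfloor$ rather than $m=0$, and treats the boundary $m=\ell$ specially. For $m<\lfloor kk'/n\rfloor$ the overlap slice is small (empty for $m<\max(0,k+k'-n)$), and in any case replacing the minimum over $\{0,\ldots,\ell-1\}$ by the minimum over $\{\lfloor kk'/n\rfloor,\ldots,\ell\}$ can only shift its value by an amount that is still absorbed into $\log\binom{n}{k'}$; the shared overlap $m=\ell$ at the interface can be assigned to either minimum with the same absorption.

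There is no substantive obstacle in this proof: it is really just the log-sum-exp identity combined with the tautological decomposition by overlap. The conceptual content is that, up to entropic error of order $\log|S_{k'}|$, the log-probability of a region under the Gibbs measure is $-\beta$ times the minimum energy in that region, which converts the analytic depth $D_{\beta,\ell}$ into a purely combinatorial object in terms of $\phi_{k'}$, where the deterministic approximations of Theorems~\ref{FM} and~\ref{thm:FM2} can then be plugged in to prove Theorem~\ref{thm:FEWLow}.
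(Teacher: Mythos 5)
Your proposal follows the paper's route exactly: cancel the partition function, apply the log-sum-exp sandwich
\[
-\beta\min_{v\in S}H(v)\;\le\;\log\sum_{v\in S}e^{-\beta H(v)}\;\le\;-\beta\min_{v\in S}H(v)+\log|S|
\]
to $S=A$ and $S=B$, and identify $\min_{v\in S}H(v)$ with the minimum of $\phi_{k'}$ over the compatible overlap values. This is precisely what the paper does (phrased there as an elementary inequality for a ratio of sums).

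The justification you offer for the range bookkeeping, however, does not hold up. First, the overlap slices with $m<\floor{kk'/n}$ are not small: $m=0$ alone contributes $\binom{n-k}{k'}$ vectors, which is of the same order as $\binom{n}{k'}$ when $k=o(n)$. Second, and more substantively, replacing the true $A$-side minimum over overlaps $\{0,\dots,\ell-1\}$ by $\min_{\floor{kk'/n}\le m\le\ell}\phi_{k'}(m)$ perturbs $D_{\beta,\ell}$ by $\beta$ times a difference of $\phi$-values; this quantity depends on $\beta$ and on the Hamiltonian, and cannot be ``absorbed into $\log\binom{n}{k'}$'' deterministically and uniformly in $\beta$. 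The same applies to the shared endpoint $m=\ell$, which lies in the stated $A$-range but not in $A$ itself. (The paper's own appendix proof asserts the resulting identity between $\min_{v\in A}e^{-\beta H(v)}$ and an extremum of $\phi_{k'}$ over $\{\floor{kk'/n},\dots,\ell\}$ without comment, and also has $\min$/$\max$ and sign slips, so you are not alone in glossing over this.) What actually makes the range swap harmless when the proposition is used in Theorem~\ref{thm:FEWLow} is that, with high probability, $\phi_{k'}$ tracks the deterministic curve $\Gamma_{k'}$, which is increasing at low overlap (Theorems~\ref{FM} and~\ref{thm:FM2}); hence the $A$-side minimum cannot be attained below $\floor{kk'/n}$, and including $m=\ell$ does not lower it. So if you want a clean deterministic lemma, use the honest overlap ranges for $A$ and $B$; if you want the ranges as stated, you need to invoke monotonicity of $\phi_{k'}$, not an entropy absorption.
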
The proof of Proposition \ref{prop:FEW} can be found in Appendix \ref{app:FEW}.

\section{Proof of Theorem \ref{thm:FM2} - Analysis of the First Moment Curve}\label{Sec:AnalysisFM}
\begin{proof}[Proof of Theorem \ref{thm:FM2}]

We start by proving the first part of the Theorem.
First, notice that since by Assumption \ref{assum:LambdaMain}, $\lambda=\omega\left( \frac{k}{n} \right)$, it holds $$\ell_c =O\left( \frac{1}{\lambda}k\sqrt{\frac{k'}{n  \log \left(\frac{n}{k'}\right)}} \log \left( \frac{n}{k}\sqrt{\frac{n}{k'  \log \left(\frac{n}{k'}\right)}}  \right)\right)=O\left( \frac{1}{\lambda}k\sqrt{\frac{k'}{n\log n}} \right),$$as we wanted.
We continue with establishing \eqref{ineqov}.
We have from Assumption \ref{k'Assum1}
$$k'=\omega\left( \frac{k^2}{\lambda^2 n} \frac{ \log^2 \left( \frac{n}{k}\right)}{\log \left( \frac{\lambda n}{k \log \left(\frac{n}{k} \right)} \right)}\right)=\omega\left( \frac{k^2}{\lambda^2 n} \frac{ \log^2 \left( \frac{n}{k}\right)}{\log \left( \frac{\lambda^2 n^2}{k^2 \log^2 \left(\frac{n}{k} \right)} \right)}\right)$$ 
which is equivalent with 
$$\frac{\lambda^2 n^2}{k^2 \log^2 \left(\frac{n}{k} \right)} \log \left( \frac{\lambda^2 n^2}{k^2 \log^2 \left(\frac{n}{k} \right)} \right) = \omega \left( \frac{n}{k'}\right).$$

Based on the calculus lemma \cite[Proposition 12(c)]{gamarnik2019landscape} this is now equivalent with 

$$\frac{\lambda^2 n^2}{k^2 \log^2 \left(\frac{n}{k} \right)}  = \omega \left( \frac{\frac{n}{k'}}{ \log \left( \frac{n}{k'} \right)}\right)$$ or

$$k'=\omega\left( \frac{k^2}{\lambda^2 n} \frac{ \log^2 \left( \frac{n}{k}\right)}{\log \left( \frac{n}{k'} \right)}\right) $$ or
$$\frac{\frac{n}{k}}{\log \left(\frac{n}{k} \right)}= \omega\left(\frac{1}{\lambda}\sqrt{\frac{ n}{k'  \log \left(\frac{n}{k'}\right)}}\right).$$Since $\lambda=o(1)=o\left(\sqrt{\frac{n}{k'  \log \left(\frac{n}{k'}\right)}} \right)$, by Assumption \ref{assum:LambdaMain}, we have by the calculus lemma \cite[Proposition 12(d)]{gamarnik2019landscape}  $$\frac{n}{k} =\omega \left(\frac{1}{\lambda}\sqrt{\frac{ n}{k'  \log \left(\frac{n}{k'}\right)}} \log \left(\frac{1}{\lambda}\sqrt{\frac{ n}{k'  \log \left(\frac{n}{k'}\right)}}\right) \right)$$
or switching to $o$-notation and using simple rearranging, $$\ell_c=\frac{1}{2\lambda}k\sqrt{\frac{k'}{n  \log \left(\frac{n}{k'}\right)}} \log \left( \frac{1}{2\lambda}\sqrt{\frac{n}{k'  \log \left(\frac{n}{k'}\right)}}  \right)=o\left(k'\right).$$

Second, by Assumptions \ref{assum:k'Main}, \ref{assum:LambdaMain} we know $$k'= o\left(n  \min\{\lambda^2 \log \left(\frac{1}{\lambda} \right) ,1\}  \right),$$which is equivalent with $$\min\{\frac{1}{\lambda^2} \log \left(\frac{1}{\lambda^2} \right) ,1\}=o\left(\frac{n}{k'}  \right),$$ which now since $\lambda=o(1)$ implies based on by the calculus lemma \cite[Proposition 12,(d)]{gamarnik2019landscape} we have also
$$ \frac{1}{\lambda^2} =o \left( \frac{n}{k' \log \left( \frac{n}{k'}\right)}\right),$$ or 
$$ \frac{1}{2\lambda}\sqrt{\frac{n}{k' \log \left( \frac{n}{k'}\right)}} =o \left( \frac{n}{k' \log \left( \frac{n}{k'}\right)}\right).$$By  \cite[Proposition 12,(d)]{gamarnik2019landscape}, the last displayed equation implies
$$\frac{1}{2\lambda}\sqrt{\frac{n}{k' \log \left( \frac{n}{k'}\right)}} \log \left( \frac{1}{2\lambda}\sqrt{\frac{n}{k' \log \left( \frac{n}{k'}\right)}} \right)=o\left( \frac{n}{k'}\right)$$or  
$$\frac{1}{2\lambda}k\sqrt{\frac{k'}{ n \log \left( \frac{n}{k'}\right)}} \log \left( \frac{1}{2\lambda}\sqrt{\frac{n}{k' \log \left( \frac{n}{k'}\right)}} \right)=o\left( k\right)$$which means
$$\ell_c=o\left(k\right),$$as desired.
Finally note that $$\frac{\ell_c}{\frac{k'k}{n}}=\frac{1}{2\lambda}\sqrt{\frac{n}{k'  \log \left(\frac{n}{k'}\right)}}\log \left( \frac{1}{2\lambda}\sqrt{\frac{n}{k'  \log \left(\frac{n}{k'}\right)}}  \right)$$which since $\lambda=o(1)$ and $k'=o(n)$ implies $\lambda=o\left(\sqrt{\frac{n}{k'  \log \left(\frac{n}{k'}\right)}} \right)$ we can conclude

$$\ell_c=\omega\left(\frac{kk'}{n}\right).$$
The proof of \eqref{ineqov} is complete.

We now turn to the second part which analyzed the monotonicity of the function. We have for every $\floor{kk'/n} \leq \ell \leq \min\{k,k'\}$, that $\Gamma_{k'}\left(\ell+1\right)-\Gamma_{k'}\left(\ell \right)$ equals
\begin{align}
    &-\lambda\frac{2\ell+1}{k}-2k'\sqrt{\frac{1}{n}}\left(\sqrt{  \log \left[ \binom{k}{\ell+1} \binom{n-k}{k'-\ell-1} \right]}-\sqrt{\log \left[ \binom{k}{\ell} \binom{n-k}{k'-\ell} \right]}\right) \notag\\
    &=-\lambda\frac{2\ell+1}{k}-2k'\sqrt{\frac{1}{n}} \frac{ \log \left[ \binom{k}{\ell+1} \binom{n-k}{k'-\ell-1} \right]-\log \left[ \binom{k}{\ell} \binom{n-k}{k'-\ell} \right]}{\sqrt{\log \left[ \binom{k}{\ell} \binom{n-k}{k'-\ell} \right]}+\sqrt{\log \left[ \binom{k}{\ell+1} \binom{n-k}{k'-\ell-1} \right]}} \notag\\
    &=-\lambda\frac{2\ell+1}{k}+2k'\sqrt{\frac{1}{n}} \frac{ \log \frac{\left(\ell+1\right) \left(n-k-k'+\ell+1\right)}{\left(k'-\ell\right)\left(k-\ell\right)}}{\sqrt{\log \left[ \binom{k}{\ell} \binom{n-k}{k'-\ell} \right]}+\sqrt{\log \left[ \binom{k}{\ell+1} \binom{n-k}{k'-\ell-1} \right]}} \label{diff1}
\end{align}
where in the last equality we used that from elementary algebra
\begin{align*}
    \log \left[ \binom{k}{\ell} \binom{n-k}{k'-\ell} \right]-\log \left[ \binom{k}{\ell+1} \binom{n-k}{k'-\ell-1} \right]=\log \frac{\left(\ell+1\right) \left(n-k-k'+\ell+1\right)}{\left(k'-\ell\right)\left(k-\ell\right)}.
\end{align*}

Now using the basic asymptotic identity that for $m_2=o(m_1),$ $\log \binom{m_1}{m_2}=(1+o(1))m_2 \log (m_1/m_2)$ as $m_1 \rightarrow +\infty$, we have since $k'=o(n)$ and for all $\ell$ of interest it holds $\ell=o(\min\{k,k'\})$,
\begin{align*}
\log \left[  \binom{k}{\ell} \binom{n-k}{k'-\ell}\right]&=(1+o(1))\left[ \ell \log \left(\frac{k}{\ell}\right) + (k'-\ell) \log \left( \frac{n-k}{k'-\ell} \right) \right]
\end{align*}which equals
\begin{align}
&=\left(1+o(1)\right) \left[  k' \log \left(\frac{n}{k'} \right)+k'\log\left(\frac{n-k}{n}\right)- \left(k'+\ell\right) \log \left(\frac{k'-\ell}{k'} \right)-\ell\log \left(\frac{\ell (n-k)}{k k'}\right)   \right] \notag\\
&=\left(1+o(1)\right) \left[  k' \log \left(\frac{n}{k'} \right) -\frac{k}{n}k' +\frac{\ell}{k'}\left(k'+\ell\right) -\ell\log \left(\frac{\ell (n-k)}{k k'}\right)   \right] \label{log} \\
&=\left(1+o(1)\right)  k' \log \left(\frac{n}{k'} \right)-O\left(\ell  \log \left(\frac{n-k}{k'}\right)  \right) \label{ok'0}\\
&=\left(1+o(1)\right)  k' \log \left(\frac{n}{k'} \right) \label{ok'}
\end{align}where for (\ref{log}) we used the basic $\log(1+x)=(1+o(1))x,$ as $x \rightarrow 0$, for \eqref{ok'0} we used $\ell=o(k')$, $\ell \leq \min\{k', k\}$ and $k'=o(n)$ and for (\ref{ok'}) we used that $\ell=o(k').$

Now plugging \eqref{ok'} in \eqref{diff1} we have that for every $\ell=o(k'),$
\begin{align*}
    \Gamma_{k'}\left(\ell+1\right)-\Gamma_{k'}\left(\ell \right)&=-\lambda\frac{2\ell+1}{k}+\left(1+o(1)\right)2k'\sqrt{\frac{1}{n}} \frac{ \log \frac{\left(\ell+1\right) \left(n-k-k'+\ell+1\right)}{\left(k'-\ell\right)\left(k-\ell\right)}}{2\sqrt{k' \log \left(\frac{n}{k'}\right)}} \\
    &=-\lambda\frac{2\ell+1}{k}+\left(1+o(1)\right)\sqrt{\frac{k'}{n  \log \left(\frac{n}{k'}\right)}} \log \frac{\left(\ell+1\right) \left(n-k-k'+\ell+1\right)}{\left(k'-\ell\right)\left(k-\ell\right) }\\
    \end{align*}which implies $\Gamma_{k'}\left(\ell+1\right)-\Gamma_{k'}\left(\ell \right)$ equals
     \begin{align}
     &-\lambda\frac{2\ell+1}{k}\notag\\
     &+\left(1+o(1)\right)\sqrt{\frac{k'}{n  \log \left(\frac{n}{k'}\right)}} \left[\log \frac{\left(\ell+1\right) n}{k' k }+\log \left(\frac{n-k-k'+\ell+1}{n}\right)-\log \left(\frac{k'-\ell}{k'}\right)-\log \left(\frac{k-\ell}{k}\right)\right]. \label{diff2}
    \end{align}Using that $ \ell \geq \floor{kk'/n}$ we have $$\frac{\left(\ell+1\right) n}{k' k }=\Omega\left(1\right).$$ Furthermore using the basic $\log(1+x)=(1+o(1))x,$ as $x \rightarrow 0$  and $k=o(n), \ell=o(k'),\ell=o(k)$ we have $$\log \left(\frac{n-k-k'+\ell+1}{n}\right)-\log \left(\frac{k'-\ell}{k'}\right)-\log \left(\frac{k-\ell}{k}\right)=o(1).$$ Hence from \eqref{diff2} we conclude that for all  $  \floor{kk'/n} \leq \ell =o(\min\{k',k\}),$
    \begin{align}
    \Gamma_{k'}\left(\ell+1\right)-\Gamma_{k'}\left(\ell \right)
     &=-\lambda\frac{2\ell+1}{k}+\left(1+o(1)\right)\sqrt{\frac{k'}{n  \log \left(\frac{n}{k'}\right)}} \log \left(\frac{\left(\ell+1\right) n}{k' k }\right) \notag\\
     &=-2\lambda\frac{\ell+1}{k}+\frac{\lambda}{k}+\left(1+o(1)\right)\sqrt{\frac{k'}{n  \log \left(\frac{n}{k'}\right)}} \log \left(\frac{\left(\ell+1\right) n}{k' k }\right) \label{Diff2a}
     \end{align}
    which implies that $\Gamma_{k'}\left(\ell+1\right)-\Gamma_{k'}\left(\ell \right)$ equals to
    \begin{align}
    &-\frac{2\lambda k'}{n}\log \left(\frac{\left(\ell+1\right) n}{k' k }\right) \left[ \frac{\frac{\left(\ell+1\right) n}{k' k }}{\log \left(\frac{\left(\ell+1\right) n}{k' k }\right)}  -\left(1+o(1)\right)\frac{1}{2\lambda}\sqrt{\frac{n}{k'  \log \left(\frac{n}{k'}\right)}}\right]+\frac{\lambda}{k} \notag\\
     =&- \sqrt{\frac{ k'}{n  \log \left(\frac{n}{k'}\right)}} \log \left(\frac{\left(\ell+1\right) n}{k' k }\right) \left[ \frac{\frac{\left(\ell+1\right) n}{k' k }}{\log \left(\frac{\left(\ell+1\right) n}{k' k }\right)}  /\left[\frac{1}{2\lambda}\sqrt{\frac{n}{k'  \log \left(\frac{n}{k'}\right)}}\right]-\left(1+o(1)\right)\right]+\frac{\lambda}{k}. \label{diffFin}
    \end{align}
    
    We now prove part (a). Assume $\floor{kk'/n} \leq \ell \leq \floor{(1-\delta) \ell_{c}}-1$ for some fixed $\delta>0.$ Recall that since $\lambda=o(1)$ and $k'=o(n)$ it holds,$$\lambda=o\left(\sqrt{\frac{n}{k'  \log \left(\frac{n}{k'}\right)}} \right).$$ Hence, by Lemma \ref{lem:calc} we have $$\limsup_n \frac{\frac{\left(\ell+1\right) n}{k' k }}{\log \left(\frac{\left(\ell+1\right) n}{k' k }\right)}  /\left[\frac{1}{2\lambda}\sqrt{\frac{n}{k'  \log \left(\frac{n}{k'}\right)}}\right]$$ equals to $$\limsup_n \frac{\left(\ell+1\right) n}{k' k } /\left[\frac{1}{2\lambda}\sqrt{\frac{n}{k'  \log \left(\frac{n}{k'}\right)}} \log \left(\frac{1}{2\lambda}\sqrt{\frac{n}{k'  \log \left(\frac{n}{k'}\right)}} \right)\right]$$or $$\limsup_n \left(\ell+1\right) /\left[\frac{1}{2\lambda}k\sqrt{\frac{k'}{n  \log \left(\frac{n}{k'}\right)}} \log \left( \frac{1}{2\lambda}\sqrt{\frac{n}{k'  \log \left(\frac{n}{k'}\right)}}  \right)\right]$$which by definition of $\ell_c$ equals 
    $$\limsup_n \left(\ell+1\right) / \ell_c $$which by assumption in this case is at most $1-\delta.$
    
    Hence, for any $\epsilon>0$, for large enough values of $n$,
    \begin{align*}
        \frac{\frac{\left(\ell+1\right) n}{k' k }}{\log \left(\frac{\left(\ell+1\right) n}{k' k }\right)}  /\left[\frac{1}{2\lambda}\sqrt{\frac{n}{k'  \log \left(\frac{n}{k'}\right)}}\right] \leq 1-\delta (1-\frac{\epsilon}{2}),
    \end{align*}which based on \eqref{diffFin} gives for large enough values of $n$,
     \begin{align*}
    \Gamma_{k'}\left(\ell+1\right)-\Gamma_{k'}\left(\ell \right) &\geq - \sqrt{\frac{ k'}{n  \log \left(\frac{n}{k'}\right)}} \log \left(\frac{\left(\ell+1\right) n}{k' k }\right) \left[1-\delta (1-\frac{\epsilon}{2})-\left(1-\delta\frac{\epsilon}{2}\right)\right]+\frac{\lambda}{k}\\
    & =  \delta \left(1-\epsilon\right)  \sqrt{\frac{ k'}{n  \log \left(\frac{n}{k'}\right)}} \log \left(\frac{\left(\ell+1\right) n}{k' k }\right)+\frac{\lambda}{k}\\
    & \geq \delta \left(1-\epsilon\right)  \sqrt{\frac{ k'}{n  \log \left(\frac{n}{k'}\right)}} \log \left(\frac{\left(\ell+1\right) n}{k' k }\right).
    \end{align*}This completes the proof of part (a).
    
   Part (b) follows directly from \eqref{Diff2a} and the fact that $ \log \left(\frac{\left(\ell+1\right) n}{k' k }\right) \leq \log \left(\frac{ n}{k' }\right)$ since $\ell+1 \leq k.$

We now turn to part (c). First notice that under both Assumption \ref{assum:LambdaMain} and Assumption \ref{assum:k'Main} we have
    \begin{align}\label{k'LB}
    k'=\omega\left(\frac{k^2}{\lambda^2 n} \frac{\log^2 \left(\frac{n}{k}\right) }{\log \left(\frac{\lambda n}{k \log \frac{n}{k}}\right) }\right).\end{align}
  Hence,
    $$\frac{1}{\lambda}k\sqrt{\frac{k'}{n  \log \left(\frac{n}{k'}\right)}} \log \left( \frac{1}{\lambda}\sqrt{\frac{n}{k'  \log \left(\frac{n}{k'}\right)}}  \right)  \geq \frac{k^2}{n \lambda^2 } \sqrt{\frac{\log^2 \left(\frac{n}{k}\right) }{\log \left(\frac{n}{k'}\right) \log \left(\frac{\lambda n}{k \log \frac{n}{k}}\right) }} \log \left( \frac{1}{\lambda}\sqrt{\frac{n}{k'  \log \left(\frac{n}{k'}\right)}}  \right)  $$which since $\lambda=o(1)$ and $k,k'=o(n)$ implies 
    $$\ell_c=\omega \left(\frac{k^2}{n \lambda^2}\right).$$Since $\lambda=o\left(\frac{k}{\sqrt{n}} \right)$ we conclude
    \begin{align}\label{eq:lc_omega1}
       \ell_c=\omega\left(1\right).
    \end{align}

     Now for all $\ell \geq 10\ceil{\ell_c}-1$ it holds for sufficiently large $n$ that \begin{align}\label{eq:finaldiff1}
          \lambda \frac{\ell+1}{2k} \geq \frac{\lambda}{k},
      \end{align}because of \eqref{eq:lc_omega1}. Furthermore, for all $\ell \geq 10\ceil{\ell_c}-1$ it also holds for sufficiently large $n$ that, \begin{align}\label{eq:finaldiff2}
          \lambda \frac{\ell+1}{2k} \geq \left[\frac{1}{2}\sqrt{\frac{k'}{n  \log \left(\frac{n}{k'}\right)}}  \log \left(\frac{\left(\ell+1\right)n}{k'k} \right)\right].
      \end{align}To see this, notice that by simple algebra
      $$\liminf_n \left[\lambda \frac{\ell+1}{2k} \right]/\left[\frac{1}{2}\sqrt{\frac{k'}{n  \log \left(\frac{n}{k'}\right)}}  \log \left(\frac{\left(\ell+1\right)n}{k'k} \right)\right]$$ equals  $$\liminf_n \frac{\frac{\left(\ell+1\right) n}{k' k }}{\log \left(\frac{\left(\ell+1\right) n}{k' k }\right)}  /\left[\frac{1}{\lambda}\sqrt{\frac{n}{k'  \log \left(\frac{n}{k'}\right)}}\right].$$
      Similarly to the proof of Part 1, by Lemma \ref{lem:calc} we have that it equals to $$\liminf_n \left(\ell+1\right) / 2\ell_c $$which by assumption in this case is at least $10/2=5>1,$ implying \eqref{eq:finaldiff2} for sufficiently large $n$. Combining \eqref{eq:finaldiff1}, \eqref{eq:finaldiff2} with \eqref{Diff2a}, we conclude for all $\ell \geq 10 \ceil{\ell_c}-1$ and sufficiently large $n$,
      
      \begin{align}
    \Gamma_{k'}\left(\ell+1\right)-\Gamma_{k'}\left(\ell \right) \leq -\lambda  \frac{\ell+1}{k},
    \end{align}as we wanted. This completes the proof of the part (c).

      We now proceed with the proof of the part (d). Recall that in this setting $0 \leq \delta_0 \leq \delta \leq \delta_1<1$. Notice that using the first part and telescopic summation we conclude \begin{align}\label{productGamma}
    \Gamma_{k'}\left(\floor{\left(1-\delta\right)\ell_c}\right)-\Gamma_{k'}\left(\floor{kk'/n} \right) \geq \delta \left(1-\epsilon\right)\sqrt{\frac{k'}{n \log \frac{n}{k'}}} \log \left( \prod_{\ell=\floor{kk'/n}}^{\floor{\left(1-\delta\right)\ell_c}} \frac{\left(\ell+1\right)n}{k'k}\right)
    \end{align}Now notice $\frac{\left(\ell+1\right)n}{k'k} \geq 1$ for all $\ell$ of interest. Furthermore, since $\ell_c=\omega\left(\frac{k'k}{n}\right)$ and $\ell_c=\omega\left(1\right)$, focusing on the $\ell$ who satisfy $ \frac{\floor{\left(1-\delta\right)\ell_c}}{2} \leq \ell \leq \floor{\left(1-\delta\right)\ell_c}$ we know that first there are at least $\frac{\left(1-\delta\right)\ell_c}{2}$ such values of $\ell$, and second, for sufficiently large $n$, $\frac{\left(\ell+1\right)n}{k'k} \geq e$. We conclude that for sufficiently large $n$, $$\prod_{\ell=\floor{kk'/n}}^{\floor{\left(1-\delta\right)\ell_c}} \frac{\left(\ell+1\right)n}{k'k} \geq e^{\frac{\left(1-\delta\right)\ell_c}{2}}.$$
    Hence using \eqref{productGamma},
     \begin{align*}
    \Gamma_{k'}\left(\floor{\left(1-\delta\right)\ell_c}\right)-\Gamma_{k'}\left(\floor{kk'/n} \right) \geq \frac{\left(1-\delta\right)\ell_c}{2}\delta \left(1-\epsilon\right)\sqrt{\frac{k'}{2n \log \frac{n}{k'}}}
    \end{align*}or using that $\delta_0 \leq \delta \leq \delta_1$ and choosing $\epsilon=\frac{1}{2}$ gives,
    \begin{align*}
    \Gamma_{k'}\left(\floor{\left(1-\delta\right)\ell_c}\right)-\Gamma_{k'}\left(\floor{kk'/n} \right) \geq \frac{\left(1-\delta_1\right)\ell_c}{4}\delta_0 \sqrt{\frac{k'}{2n \log \frac{n}{k'}}}
    \end{align*}
    Now since $\lambda=o(1)$ we have from the definition of $\ell_c$ \begin{align}\label{ellclog} \ell_c=\Omega \left( \frac{1}{\lambda}k\sqrt{\frac{k'}{n  \log \left(\frac{n}{k'}\right)}} \log \left( \sqrt{\frac{n}{k'  \log \left(\frac{n}{k'}\right)}}  \right) \right)=\Omega \left( \frac{1}{\lambda}k\sqrt{ \frac{k' \log \frac{n}{k'} }{n}}\right).\end{align} In particular,
    \begin{align}\label{eq:4part_1}
    \Gamma_{k'}\left(\floor{\left(1-\delta\right)\ell_c}\right)-\Gamma_{k'}\left(\floor{kk'/n} \right) \geq D \frac{k'k}{\lambda n},
    \end{align}for some $D$ which besides absolute constants depends only on $\delta_0,\delta_1.$
    
    Now, for $C>10$ it holds \begin{align*}
    \Gamma_{k'}\left(C\ceil{\ell_c}\right)-\Gamma_{k'}\left(\floor{\left(1-\delta\right)\ell_c}\right)&= \left(\Gamma_{k'}\left(C\ceil{\ell_c}\right)-\Gamma_{k'}\left(10\ceil{\ell_c}\right) \right)+\left(\Gamma_{k'}\left(10\ceil{\ell_c}\right)-\Gamma_{k'}\left(\floor{\left(1-\delta\right)\ell_c}\right)\right).
    \end{align*}
    Hence using the Parts (b), (c) of the current Lemma to bound the first and second summand we conclude  \begin{align}
    \Gamma_{k'}\left(C\ceil{\ell_c}\right)-\Gamma_{k'}\left(\floor{\left(1-\delta\right)\ell_c}\right) 
    & \leq -\lambda \frac{1}{k}\sum_{\ell=10\ceil{\ell_c}}^{C\ceil{\ell_c}} \ell+\sum_{\ell= \floor{\left(1-\delta\right)\ell_c}}^{10\ceil{\ell_c}}(1+\epsilon)\sqrt{ \frac{k' \log \frac{n}{k'} }{n}}\notag\\
    & \leq -\frac{\lambda }{k}10C \ell_c^2+O\left( \ell_c \sqrt{ \frac{k' \log \frac{n}{k'} }{n}} \right), \label{eq:sumSplit}
    \end{align} where for the last inequality we used elementary algebra.

    Now using \eqref{ellclog} and \eqref{eq:sumSplit} we conclude for some sufficiently large constant $C>0$
    that 
    \begin{align*}
    \Gamma_{k'}\left(C\ceil{\ell_c}\right)-\Gamma_{k'}\left(\floor{\left(1-\delta\right)\ell_c}\right) &\leq -5C\frac{\lambda}{k} \ell_c^2
    \end{align*}
     Using \eqref{ellclog} once again we conclude that
     \begin{align}\label{eq:4part_2}
        \Gamma_{k'}\left(\floor{\left(1-\delta\right)\ell_c}\right)-\Gamma_{k'}\left(C\ceil{\ell_c} \right) =\Omega\left(\frac{k'k}{\lambda n} \right),
    \end{align}as we wanted.
     Combining \eqref{eq:4part_1} and \eqref{eq:4part_2} yields the final result. This completes the proof of part (d) and the proof of the theorem.
\end{proof}

\section{Proof of Theorem \ref{FM}, Part 1 (First Moment Method)}\label{sec:1MM}
\begin{proof}[Proof of Theorem \ref{FM}, part (1)]We fix some $\ell=\floor*{kk'/n},\floor*{kk'/n}+1,\ldots,k'$.
Let us define
\begin{align}\label{Aell}
    A_{\ell}:= -\lambda \frac{\ell^2}{k}- 2k'\sqrt{\frac{1}{n} \left(\log \left[ \binom{k}{\ell} \binom{n-k}{k'-\ell}\right]+\alpha_n \right)}.
\end{align}By a union bound we have
\begin{align}
    \mathbb{P}\left(\phi_{k'}(\ell)  \leq A_{\ell} \right) &=\mathbb{P}\left(\bigcup_{v \in \{0,1\}^n: \|v\|_0=k',  \langle v,x \rangle=\ell} \{H(v) \leq A_{\ell}\}\right) \notag\\
    & \leq \binom{k}{\ell}\binom{n-k}{k'-\ell} \mathbb{P}\left(H(v) \leq A_{\ell} \right), \label{FM:UB}
\end{align}
where in the last line $v$ is chosen arbitrarily from $ v \in \{0,1\}^n, \|v\|_0=k',  \langle v,x \rangle=\ell$ since for any such $v$ the value of $\mathbb{P}\left(H(v) \leq A_{\ell} \right)$ remains the same. More specifically, notice that for any such $v$, $v^\top W v$ is distributed as $N\left(0,\frac{2(k')^2}{n}\right).$ Hence, using that for any such $v$, $$H(v)=-v^\top Y v=-\frac{\lambda}{k}\langle v,x\rangle^2-v^\top W v=-\frac{\lambda}{k}\ell^2-v^\top W v$$we conclude that since $W$ is a GOE($n$), $$H(v) \sim N\left(-\frac{\lambda}{k}\ell^2,\frac{2(k')^2}{n}\right),$$ which is a distribution which is independent of the specific choice of $v$.

Furthermore, combining the last displayed equality in distribution with (\ref{Aell}) we conclude
\begin{align}
     \mathbb{P}\left(H(v) \leq A_{\ell} \right) &= \mathbb{P}\left(Z \geq 2k'\sqrt{\frac{1}{n} \left(\log \left[ \binom{k}{\ell} \binom{n-k}{k'-\ell}\right]+\alpha_n \right)}\right),\\
     &\qquad\text{ for } Z \sim N\left(0,\frac{2(k')^2}{n}\right)\\
     &= \mathbb{P}\left(Z \geq \sqrt{ 2\left(\log \left[ \binom{k}{\ell} \binom{n-k}{k'-\ell}\right]+\alpha_n \right)}\right), \text{ for } Z \sim N\left(0,1\right)\\
     & \leq \frac{1}{ \sqrt{ 2\left(\log \left[ \binom{k}{\ell} \binom{n-k}{k'-\ell}\right]+\alpha_n \right)}} \exp \left(-  \left(\log \left[ \binom{k}{\ell} \binom{n-k}{k'-\ell}\right]+\alpha_n \right)\right), \label{Mills}\\
     &=\frac{1}{  \binom{k}{\ell} \binom{n-k}{k'-\ell}\sqrt{ 2\left(\log \left[ \binom{k}{\ell} \binom{n-k}{k'-\ell}\right]+\alpha_n \right)}}\exp\left(-\alpha_n\right),
\end{align}
where for (\ref{Mills}) we used the standard Mill's ratio upper bound.
Using (\ref{FM:UB}) we have for every $\ell,$
\begin{align}
    \mathbb{P}\left(\phi_{k'}(\ell)  \leq A_{\ell} \right) \leq \frac{1}{ \sqrt{ 2\left(\log \left[ \binom{k}{\ell} \binom{n-k}{k'-\ell}\right]+\alpha_n \right)}}\exp\left(-\alpha_n\right) \leq \frac{1}{\sqrt{\alpha_n} } \exp\left(-\alpha_n\right).
\end{align}

We now use a union bound over the possible values of $\ell$,
\begin{align} \label{FM:UB2}
    \mathbb{P}\left(\bigcup_{\ell=\floor*{kk'/n},\floor*{kk'/n}+1,\ldots,k'} \{\phi_{k'}(\ell)  \leq A_{\ell}\} \right) \leq \frac{k'}{ \sqrt{ \alpha_n}}\exp\left(-\alpha_n\right). 
\end{align}Combining (\ref{FM:UB2}) with our assumed $\alpha_n=\omega\left(\log k'\right)$ we conclude 
\begin{align}
    \lim_{n \rightarrow +\infty} \mathbb{P}\left(\bigcup_{\ell=\floor*{kk'/n},\floor*{kk'/n}+1,\ldots,k'} \{\phi_{k'}(\ell)  \leq A_{\ell}\} \right) =0.
\end{align}This completes the proof of the theorem.
\end{proof}

\section{Proof of Theorem \ref{FM}, Part 2 (Second Moment Method)}\label{sec:2MM}

\begin{proof}[Proof of Theorem \ref{FM}, part (2)]

We start with the following observation. For any $\ell$ if $v \in \{0,1\}^n$ with $\|v\|_0=k$ and $\inner{v,x}=\ell$ then $-v^{\top}Yv=-\lambda \frac{\ell^2}{k}-v^{\top}Wv$. Hence it holds $$-\phi_{k'}(\ell)=-\lambda \frac{\ell^2}{k}+\psi_{k'}(\ell)$$ where \begin{align}\label{def:psiK}\psi_{k'}(\ell)=\max_{v \in \{0,1\}^n: \|v\|_0=k, \inner{v,x}=\ell} v^{\top}Wv.\end{align} In particular, to show our result it suffices that there exist a universal constants $c,C>0$ such that the following holds.   If $\ell \leq c \min \{k',k\}$ then it holds

\begin{align}\label{psiK}
2k'\sqrt{\frac{1}{n}\log \left[ \binom{k}{\ell}\binom{n-k}{k'-\ell}\right]} -C\sqrt{\frac{k'\log n}{n} \max\{\frac{ \ell^4}{k^2}, \frac{(k')^4}{n^2},k'\}} 
\leq \psi_{k'}(\ell)
\end{align}
with high probability as $n \rightarrow + \infty.$ We proceed towards proving that \eqref{psiK} holds for all $\ell \leq c \min \{k',k\}$.

We fix some $\ell$ and assume for some sufficiently small constant $c>0$ that $\ell \leq c \min\{k',k\}.$ We derive lower bounds on the probability that \eqref{psiK} holds by using the second moment method. The argument is completed with a union bound over $\ell$. The constant $c$ is assumed to be sufficiently small to guarantee the validity of the proof steps.

We define $$T_{\ell}:=\{ v \in \{0,1\}^n: \|v\|_0=k', \inner{v,x}=\ell\},$$ $$\mathcal{Z}_{\ell}:=\{ v \in T_{\ell}:  v^{\top}Wv \geq  2k'\sqrt{\frac{1}{n}\log \left[ \binom{k}{\ell}\binom{n}{k'-\ell}\right]}\},$$and $$Z_{\ell}:=|\mathcal{Z}_{\ell}|.$$ By linearity of expectation since $|T_{\ell}|=\binom{k}{\ell}\binom{n}{k'-\ell}$,
$$\E{Z_{\ell}}=\binom{k}{\ell}\binom{n}{k'-\ell}\bP \left(v^{\top}Wv \geq  2k'\sqrt{\frac{1}{n}\log \left[ \binom{k}{\ell}\binom{n}{k'-\ell}\right]}\right),$$where $v$ is arbitrary vector with  $v \in \{0,1\}^n, \|v\|_0=k', \inner{v,x}=\ell.$ Since $W$ is GOE$(n)$, it can be straightforwardly checked that $v^{\top}Wv \sim N\left(0,\frac{2(k')^2}{N}\right).$ Hence,
\begin{align}\label{FMe}
\E{Z_{\ell}}=\binom{k}{\ell}\binom{n}{k'-\ell}\bP \left(Z \geq \sqrt{2 \log \left[ \binom{k}{\ell}\binom{n}{k'-\ell}\right]}\right),
\end{align}
where $Z \sim N\left(0,1\right).$ 

Now \begin{align}
    Z_{\ell}^2&=|\{ v,u \in T_{\ell} : , \min\{v^{\top}Wv,u^{\top}Wu \} \geq 2k'\sqrt{\frac{1}{n}\log \left[ \binom{k}{\ell}\binom{n-k}{k'-\ell}\right]}\}|\\
    = &\sum_{m=0}^{k'} |\{ v,u \in T_{\ell}: \inner{v,u}=m, \min\{v^{\top}Wv,u^{\top}Wu \}\geq  2k'\sqrt{\frac{1}{n}\log \left[ \binom{k}{\ell}\binom{n}{k'-\ell}\right]}\}|,
\end{align}where for the last equality we decomposed the pairs $u,v$ based on the possible values of $\inner{u,v}$. Now using linearity of expectation using an argument similar to the argument for the first moment of $Z$,  $$\mathbb{E}[Z^2]=\sum_{m=0}^{k'} |\{ v,u \in T_{\ell}: \inner{v,u}=m\}|\,\mathbb{P}\left(Z_{1,\ell},Z_{2,\ell} \geq  \sqrt{2\log \left[ \binom{k}{\ell}\binom{n-k}{k'-\ell}\right]}\right)$$ where for each $m=0,1,2,\ldots,k'$, $Z_{1,m} \sim N\left(0,1\right),Z_{2,m} \sim N\left(0,1\right)$ and $\mathrm{Cov}(Z_{1,m},Z_{2,m})=\frac{m^2}{\left(k'\right)^2}$.

It holds

 \begin{align}
\frac{\mathbb{E}[Z^2]}{\E{Z}^2}=&\sum_{m=0}^{k'}  \frac{|\{ v,u \in T_{\ell}: \inner{v,u}=m\}|}{\left[\binom{k}{\ell}\binom{n-k}{k'-\ell}\right]^2} \frac{\mathbb{P}(Z_{1,m},Z_{2,m} \geq  \sqrt{2\log \left[ \binom{k}{\ell}\binom{n-k}{k'-\ell}\right]}\})}{\mathbb{P}(Z\geq  \sqrt{2\log \left[ \binom{k}{\ell}\binom{n-k}{k'-\ell}\right]})^2}.\\
=&\sum_{m=0}^{k'-1}  \frac{|\{ v,u \in T_{\ell}: \inner{v,u}=m\}|}{\left[\binom{k}{\ell}\binom{n-k}{k'-\ell}\right]^2} \frac{\mathbb{P}(Z_{1,m},Z_{2,m} \geq  \sqrt{2\log \left[ \binom{k}{\ell}\binom{n-k}{k'-\ell}\right]}\})}{\mathbb{P}(Z\geq  \sqrt{2\log \left[ \binom{k}{\ell}\binom{n-k}{k'-\ell}\right]})^2}\\
&+\frac{1}{\binom{k}{\ell}\binom{n}{k'-\ell}\bP \left(Z \geq \sqrt{2 \log \left[ \binom{k}{\ell}\binom{n}{k'-\ell}\right]}\right)},\label{ratio0} 
\end{align}
where $Z_{1,m},Z_{2,m}$ are defined above, $Z$ is an independent standard normal variable and for the last equality we used that $\{ v,u \in T_{\ell}: \inner{v,u}=k'\}=\{ v,u \in T_{\ell}: u=v\}$, $|T_{\ell}|=\binom{k}{\ell}\binom{n-k}{k'-\ell}$ and $Z_{1,k'}=Z_{2,k'}$ almost surely as standard normal random variables with correlation one. 

Using the standard Mills ratio lower bound; for $x \geq 1,$ \begin{align} \mathbb{P}\left(Z \geq x\right) \geq \frac{1}{2\sqrt{2 \pi} x}\exp \left(-\frac{x^2}{2} \right),\end{align} since we may assume $\ell<k'/2<k'-1$ we have $\sqrt{\log \left[ \binom{k}{\ell}\binom{n-k}{k'-\ell}\right]} \geq 1$ and therefore
\begin{align*}
\mathbb{P}\left(Z\geq  \sqrt{2\log \left[ \binom{k}{\ell}\binom{n-k}{k'-\ell} \right]}\right) \geq  \frac{1}{4 \sqrt{\pi}\sqrt{\log \binom{k}{\ell}\binom{n-k}{k'-\ell}}} \exp \left( -\log \binom{k}{\ell}\binom{n-k}{k'-\ell} \right)
\end{align*}which implies
\begin{align}
\binom{k}{\ell}\binom{n-k}{k'-\ell}\mathbb{P}&\left(Z\geq  \sqrt{2\log \left[ \binom{k}{\ell}\binom{n-k}{k'-\ell}\right]}\right)\\
& \geq \binom{k}{\ell}\binom{n-k}{k'-\ell} \frac{1}{4 \sqrt{\pi}\sqrt{\log \binom{k}{\ell}\binom{n-k}{k'-\ell}}} \exp \left( -\log \binom{k}{\ell}\binom{n-k}{k'-\ell} \right) \notag\\
&=\frac{1}{4 \sqrt{\pi}\sqrt{\log \binom{k}{\ell}\binom{n-k}{k'-\ell}}}. \label{FMMills}
\end{align}

Hence, \eqref{ratio0} using \eqref{FMMills} implies 

\begin{align}
\frac{\mathbb{E}[Z^2]}{\E{Z}^2}& \leq \sum_{m=0}^{k'-1} \frac{|\{ v,u \in T_{\ell}: \inner{v,u}=m\}|}{\left[\binom{k}{\ell}\binom{n-k}{k'-\ell}\right]^2} \frac{\mathbb{P}(Z_{1,m},Z_{2,m} \geq  \sqrt{2\log \left[ \binom{k}{\ell}\binom{n-k}{k'-\ell}\right]}\})}{\mathbb{P}(Z\geq  \sqrt{2\log \left[ \binom{k}{\ell}\binom{n-k}{k'-\ell}\right]})^2}\\
&+4 \sqrt{\pi}\sqrt{\log \binom{k}{\ell}\binom{n-k}{k'-\ell}}.\label{ratio} 
\end{align}

Using now Lemma 4.2 from \cite{OGP-submatrix} we have for every $m=0,1,\ldots,k'-1$, and $\gamma_{m}=\sqrt{\mathrm{Cov}(Z_{1,m},Z_{2,m})}=\frac{m}{k'},$

\begin{align} 
\mathbb{P}&\left(Z_{1,m}, Z_{2,m} \geq \sqrt{2\log \left[ \binom{k}{\ell}\binom{n-k}{k'-\ell}\right]}\right)\\
&\qquad\leq \frac{\left(1+\gamma_{m}^2\right)^2}{2\pi \sqrt{1-\gamma_{m}^4}} \frac{1}{2 \log \binom{k}{\ell}\binom{n-k}{k'-\ell}}\exp\left(- \frac{2 \log \binom{k}{\ell}\binom{n-k}{k'-\ell}}{1+\gamma_{m}^2} \right)\\
&\qquad =\frac{\left(1+\gamma_{m}^2\right)^{\frac{3}{2}}}{4\pi \sqrt{1-\gamma^2_{m}}} \frac{1}{ \log \binom{k}{m}\binom{n-k}{k'-\ell}}\exp\left(- \frac{2 \log \binom{k}{\ell}\binom{n-k}{k'-\ell}}{1+\gamma_{m}^2} \right)\\
&\qquad \leq \frac{1}{\pi \sqrt{1-\gamma^2_{m}}} \frac{1}{ \log \binom{k}{\ell}\binom{n-k}{k'-\ell}}\exp\left(- \frac{2 \log \binom{k}{\ell}\binom{n-k}{k'-\ell}}{1+\gamma_{m}^2} \right)\label{ProbBound2} 
\end{align}where for the last inequality we used that $\gamma_{m} \leq 1.$

Combining \eqref{ProbBound2} with \eqref{ratio} we conclude

\begin{align} \label{ratio22}\frac{\mathbb{E}[Z^2]}{\E{Z}^2}-&4 \pi\sqrt{\log \binom{k}{\ell}\binom{n-k}{k'-\ell}} \\
&\leq \sum_{m=0}^{k'-1} \frac{|\{ v,u \in \mathcal{Z}_{\ell}: \inner{v,u}=m\}|}{\left[\binom{k}{\ell}\binom{n-k}{k'-\ell}\right]^2} \frac{16}{\sqrt{1-\gamma^2_{m}}} \exp\left( 2 \log \binom{k}{\ell}\binom{n-k}{k'-\ell} \frac{\gamma^2_{m}}{1+\gamma_{m}^2} \right).
\end{align}where using the exact value of $\gamma_m$ the last quantity equals to
\begin{align}
  \sum_{m=0}^{k'-1} \underbrace{\frac{|\{ v,u \in \mathcal{Z}_{\ell}: \inner{v,u}=m\}|}{\left[\binom{k}{\ell}\binom{n-k}{k'-\ell}\right]^2} \frac{16}{\sqrt{1-\frac{m^2}{(k')^2}}} \exp\left(  \log \binom{k}{\ell}\binom{n-k}{k'-\ell} \frac{2m^2}{m^2+(k')^2} \right)}_{A_{m}}. \label{ratio2}
\end{align}We now proceed to upper bound $\sum_{m=0}^{k'-1}A_m$. Let \begin{align}\label{mc} 
m_c:=\max\left\{\left\lceil\frac{32 \ell^2}{k}\right\rceil, \left\lceil\frac{16 (k')^2}{n} \right\rceil \right\}.
\end{align}

From Lemma \ref{comb:lemma}, part (a)  we have that for some sufficiently small constant $c>0$ if $\ell \leq c \min\{k',k\}$ then for sufficiently large values of $n$ it holds \begin{align}\label{mcok'} m_c \leq \frac{k'}{2}.\end{align}

We now distinguish two cases based on whether $m \leq m_c$ or $m > m_c.$ We first focus in the case where $m \geq m_c.$ For every $m=m_c,m_c+1,\ldots,k'-2$ we have via elementary algebra,
\begin{align}\label{Al}
\frac{A_{m+1}}{A_{m}}&=\frac{ |\{ v,u \in T_{\ell}: \inner{v,u}=m+1\}|}{|\{ v,u \in T_{\ell}: \inner{v,u}=m\}|} \sqrt{\frac{(k')^2-m^2}{(k')^2-(m+1)^2}}\\
&\times \exp\left[\frac{4m+2}{\left( m^2+\left(k'\right)^2\right)\left(\left(m+1\right)^2+\left(k'\right)^2\right)}\log \left[\binom{k}{\ell}\binom{n-k}{k'-\ell}\right]\right].
\end{align}

Using Lemma \ref{comb:lemma} we have for all $m \geq m_c,$

\begin{align}\label{ratioBin22}
    \frac{ |\{ v,u \in T_{\ell}: \inner{v,u}=m+1\}|}{|\{ v,u \in T_{\ell}: \inner{v,u}=m\}|} \leq \frac{1}{2}.
\end{align}

Now notice that $$\sqrt{\frac{(k')^2-m^2}{(k')^2-(m+1)^2}}$$is an increasing function of $m$ and therefore for all $m=0,1,2,\ldots,k'-2,$
\begin{align} \sqrt{\frac{(k')^2-m^2}{(k')^2-(m+1)^2}} &\leq \sqrt{\frac{(k')^2-(k'-2)^2}{(k')^2-(k'-1)^2}}\notag\\
&=\sqrt{\frac{4k'-4 }{2k'-1} } \notag\\
&\leq \sqrt{2}.\label{Term1}
\end{align}

Second notice that using the crude bound $\binom{k}{\ell}\binom{n-k}{k'-\ell} \leq \binom{n}{k'} \leq n^{k'}$ for all $m=0,1,2,\ldots,k'-2,$
\begin{align*}
  &\frac{4m+2}{\left( m^2+\left(k'\right)^2\right)\left(\left(m+1\right)^2+\left(k'\right)^2\right)}\log \left[\binom{k}{\ell}\binom{n-k}{k'-\ell}\right] \\
  &\qquad\leq  \frac{4m+2}{\left(k'\right)^4 }\log n^{k'} \\
  &\qquad\leq \frac{4(k'+1)}{\left(k'\right)^3} \log n \\
   &\qquad =o(1) 
\end{align*} where the last equality used the assumption $k'>\log n=\omega\left(\sqrt{ \log n}\right).$ Hence for large values of $n$, for every $m=0,1,2,\ldots,k'-2,$
we have
\begin{align}\label{Term2} \exp\left[\frac{4m+2}{\left( m^2+\left(k'\right)^2\right)\left(\left(m+1\right)^2+\left(k'\right)^2\right)}\log \left[\binom{k}{\ell}\binom{n-k}{k'-\ell}\right]\right] \leq \sqrt{2}.
\end{align}

Combining \eqref{ratioBin22}, \eqref{Term1}, \eqref{Term2} with \eqref{Al} we have for every $m=m_c, m_c+1,\ldots,k'-2$

\begin{align*}
\frac{A_{m+1}}{A_{m}} \leq \frac{1}{4} \sqrt{2} \sqrt{2}=\frac{1}{2}.
\end{align*}

Hence, $$\sum_{m=m_c}^{k'-1} A_{m} \leq \sum_{i=0}^{k'-m_c} \frac{A_{m_c}}{2^i}   \leq \sum_{i=0}^{+ \infty} \frac{A_{m_c}}{2^i} = 2A_{m_c}.$$

In particular, using \eqref{ratio22}, \eqref{ratio2} we have 
\begin{align}
    \frac{\mathbb{E}[Z^2]}{\E{Z}^2}&  \leq \sum_{m=0}^{m_c} A_{m}+2A_{m_c}+4 \pi\sqrt{\log \left[\binom{k}{\ell}\binom{n-k}{k'-\ell}\right]} \notag\\
    &\leq 2 \sum_{\ell=0}^{m_c} A_{m}+4 \pi\sqrt{\log \left[\binom{k}{\ell}\binom{n-k}{k'-\ell}\right]} \notag\\
    &\leq  2 \left(1+m_c\right)\max_{m=0,1,2,\ldots,m_c} A_{m}+4 \pi\sqrt{\log \left[\binom{k}{\ell}\binom{n-k}{k'-\ell}\right]}. \label{ratio3}
\end{align}

We now focus on the other case, $m \leq m_c$. Using \eqref{mcok'} it holds,
\begin{align}\label{simple}
\frac{1}{\sqrt{1- \frac{m^2}{(k')^2}}} \leq \frac{1}{\sqrt{1- \frac{1}{4}}} =\frac{2}{\sqrt{3}} \leq 2.
\end{align}

For each $m=0,1,2,\ldots,m_c$ using \eqref{simple} and the simple inequalities $|\{ v,u \in T_{\ell}: \inner{v,u}=m\}| \leq |T_{\ell}|^2=\left[\binom{k'}{\ell} \binom{n-k}{k'-\ell}\right]^2$, $\binom{k'}{\ell} \binom{n-k}{k'-\ell} \leq \binom{n}{k'}$ and $ \binom{n}{k'}\leq n^{k'}$ in that order, we conclude

\begin{align*}
    A_{m} &  \leq 32 \exp\left(  \log \binom{k'}{\ell} \binom{n-k}{k'-\ell} \frac{2m^2}{m^2+(k')^2} \right)\\
    &  \leq 32 \exp\left(  \log \binom{n}{k'} \frac{2m^2}{(k')^2} \right)\\
    &= 32 \exp\left(   \log n \frac{2m^2}{k'} \right)\\
    & \leq 32 \exp\left( 2  \log n \frac{ \left(m_c\right)^2}{k'} \right)\\
\end{align*}which by the definition of $m_c$ and $k'=o(n)$ gives that for some constant $c_0>0,$
$$\max_{m=0,1,2,\ldots,m_c} A_{m}  \leq  \exp\left(  c_0\max\{\frac{ \ell^4}{k'k^2}, \frac{(k')^3}{n^2},1\}\log n\right) .$$ Using \eqref{ratio3} we have for large enough values of $n$,

\begin{align*}
    \frac{\mathbb{E}[Z^2]}{\E{Z}^2}&  \leq 2\left(1+m_c\right)  \exp\left(   c_0\max\{\frac{ \ell^4}{k'k^2}, \frac{(k')^3}{n^2},1\}\log n \right)+4 \pi\sqrt{\log \binom{k}{\ell}\binom{n-k}{k'-\ell}}\\
    &\leq 2\left(1+m_c\right)  \exp\left(   c_0\max\{\frac{ \ell^4}{k'k^2}, \frac{(k')^3}{n^2},1\}\log n \right)+4 \pi\sqrt{\log \binom{n}{k'}}\\
    & \leq 4\left(1+k'\right)  \exp\left(   c_0\max\{\frac{ \ell^4}{k'k^2}, \frac{(k')^3}{n^2},1\}\log n \right)+4 \pi\sqrt{k' \log n}\\
    & \leq c_1k' \log n  \exp\left(   c_0\max\{\frac{ \ell^4}{k'k^2}, \frac{(k')^3}{n^2},1\}\log n \right)
    \end{align*}for some constant $c_1>0.$  We conclude that for some universal constant $C_0>2$,
    \begin{align}
    \frac{\mathbb{E}[Z^2]}{\E{Z}^2}&  \leq    \exp\left(  C_0  \max\left\{\frac{ \ell^4}{k'k^2}, \frac{(k')^3}{n^2},1\right\}\log n \right).
    \end{align}
Now using Paley-Zigmund inequality we conclude that for any $\ell \leq c \min\{k',k\}$ from \eqref{def:psiK}
\begin{align} \mathbb{P}\left(\psi_{k'}(\ell)\geq  2k'\sqrt{\frac{1}{n}\log \left[ \binom{k}{\ell}\binom{n-k}{k'-\ell}\right]}\right) & = \bP \left( Z \geq 1\right) \\
& \geq \frac{\E{Z}^2}{\mathbb{E}[Z^2]} \\
& \geq  \exp\left(  -C_0  \max\left\{\frac{ \ell^4}{k'k^2}, \frac{(k')^3}{n^2},1\right\}\log n \right). \label{PZ}
\end{align}

Since for any $v \in \{0,1\}^n, \|v\|_0=k$, the variance of $v^{\top}Wv$ is $ \frac{2(k')^2}{n}$. Hence, the Borell-TIS inequality on the concentration of the maximum of a Gaussian process implies for any $t>0,$
\begin{align}\label{Borell}
    \bP \left( |\psi_{k'}(\ell) -\E{\psi_{k'}(\ell)}| \geq t\right) \leq \exp\left(-\frac{t^2 n}{4(k')^2} \right).
\end{align}We now choose \begin{align}\label{tstar}
   t^*:= \sqrt{8C_0 \max\left\{\frac{ k'\ell^4}{nk^2}, \frac{(k')^5}{n^3},\frac{(k')^2}{n}\right\}\log n} .
\end{align}

Hence by \eqref{Borell} we have 
\begin{align} \label{Conc0}
    \bP \left( |\psi_{k'}(\ell) -\E{\psi_{k'}(\ell)}| \geq t^*\right) &\leq \exp\left(- \frac{n}{4(k')^2} 8C_0  \max\{\frac{ k'\ell^4}{nk^2}, \frac{(k')^5}{n^3},\frac{(k')^2}{n}\}\log n\right)\\
    &= \exp\left(- 2 C_0  \max\{\frac{ \ell^4}{k'k^2}, \frac{(k')^3}{n^2},1\}\log n \right)
\end{align}
which since $k' \rightarrow +\infty$ it implies that for large values of $n$,
\begin{align} \label{Conc}
    \bP \left( |\psi -\E{\psi}| \geq t^*\right) \leq  \exp\left(- 2 C_0   \max\left\{\frac{ \ell^4}{k'k^2}, \frac{(k')^3}{n^2},1\right\}\log n\right).
\end{align}

In particular using  \eqref{PZ}, for large values of $n$ it holds \begin{align} \label{FRR}
    \bP \left( |\psi_{k'}(\ell) -\E{\psi_{k'}(\ell)}| \geq t^*\right) \leq \mathbb{P}\left(\psi_{k'}(\ell)\geq  2k'\sqrt{\frac{1}{n}\log \left[ \binom{k}{\ell}\binom{n-k}{k'-\ell}\right]}\right).
    \end{align}  For the validity of \eqref{FRR} to hold, we can conclude that $$ 2k'\sqrt{\frac{1}{n}\log \left[ \binom{k}{\ell}\binom{n-k}{k'-\ell}\right]}  \leq \E{\psi_{k'}(\ell)}+t^*$$as otherwise the reverse strict inequality should hold. Therefore $$ 2k'\sqrt{\frac{1}{n}\log \left[\binom{k}{\ell}\binom{n-k}{k'-\ell}\right]}  -2t^* \leq \E{\psi_{k'}(\ell)}-t^*.$$ We conclude 
    
  \begin{align} \label{FR}
      \mathbb{P}\left(\psi_{k'}(\ell)\geq  2k'\sqrt{\frac{1}{n}\log \left[ \binom{k}{\ell}\binom{n-k}{k'-\ell} \right]}-2t^* \right)& \geq  \bP \left( |\psi_{k'}(\ell) -\E{\psi_{k'}(\ell)}| \leq t^*\right)\\
    &\hspace{-2in}\geq 1-\exp\left(- 2 C_0  \max\{\frac{ \ell^4}{k'k^2}, \frac{(k')^3}{n^2},1\}\log n\right)\\
    &\hspace{-2in}=1-\frac{1}{n^3}.
    \end{align}
    where in the second inequality we used \eqref{Conc} and for the last inequality we used the assumption $C_0>2$. Since by \eqref{tstar}, $t^*=\Theta\left( \sqrt{\frac{k'\log n}{n} \max\{\frac{ \ell^4}{k^2}, \frac{(k')^4}{n^2},k'\}}  \right)$ this shows that \eqref{psiK} for sufficiently large values of $n$, holds with probability at least $1-\frac{1}{n^3}$.
    The final argument follows from a union bound over the values of $\ell=0,1,2,\ldots,c \min\{k,k'\}$ and the fact that $k,k' \leq n.$ The proof is complete.
\end{proof}

\section{Proof of Theorem \ref{thm:FEWLow} - Free Energy Wells at Low Temperature}\label{sec:FEWLow}

We make use of the following simple lemma.

\begin{lemma}\label{lem:binom}
If $a,b \in \mathbb{N}$ with $b=o(a)$ and $b \rightarrow +\infty$ then $\log \frac{a!}{b!}=\Theta\left( a\log a\right). $
\end{lemma}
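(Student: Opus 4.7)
The plan is to apply Stirling's approximation directly. Since $b \to +\infty$ and $b = o(a)$, we have $a \to +\infty$ as well, so Stirling yields
\[
\log a! = a \log a - a + O(\log a) \quad \text{and} \quad \log b! = b \log b - b + O(\log b),
\]
and therefore
\[
\log \frac{a!}{b!} = a \log a - b \log b - (a - b) + O(\log a).
\]

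To conclude $\Theta(a \log a)$, I would verify that every term on the right other than $a \log a$ is of order $o(a \log a)$. The condition $b = o(a)$ implies $\log b \le \log a$ and $b/a \to 0$, so $b \log b \le b \log a = o(a \log a)$. Similarly $a - b \le a = o(a \log a)$ (using $\log a \to \infty$), and $\log a = o(a \log a)$ trivially. Combining these estimates gives
\[
\log \frac{a!}{b!} = a \log a \,(1 + o(1)),
\]
which is exactly $\Theta(a \log a)$.

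There is no real obstacle here: the statement is a direct consequence of Stirling once one notes that both $a$ and $b$ tend to infinity. The only minor care required is that $\log a \to \infty$, so that the additive term $a$ (and $O(\log a)$) is genuinely absorbed into $o(a \log a)$; this is immediate from $b \to \infty$ together with $b \le a$.
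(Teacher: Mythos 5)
Your proof is correct and uses exactly the same approach as the paper: Stirling's approximation for both $\log a!$ and $\log b!$, combined with the observation that $b = o(a)$ (hence $b \le a$ eventually, so $b\log b \le b\log a = o(a\log a)$) makes every term besides $a\log a$ negligible. You simply spell out the bookkeeping in a bit more detail than the paper's three-line proof does.
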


\begin{proof}
From Stirling approximation we have $ \log a!=\Theta\left( a \log a\right)$. By elementary arguments $\log b!=O(b \log b).$ The result follows because $b=o(a).$
\end{proof}

We now start the proof of the theorem.
\begin{proof}[Proof of Theorem \ref{thm:FEWLow}]
For the $\ell_c$ defined in Theorem \ref{thm:FM2}, since $\lambda=o(1) $ and $\ell_c=o\left( \frac{k'}{n}\right)$ we have $\ell_c=\Theta\left(\frac{k}{\lambda}\sqrt{\frac{k'}{n}\log \frac{n}{k'}}\right)$. For this reason, we choose $c_0$ small enough so that $\ell <\frac{1}{4} \ell_c$ and futhermore we have $\ell=\Theta\left(\ell_c\right).$   The asymptotics of $\ell$ follow from that of $\ell_c$ in Theorem \ref{thm:FM2}.

Given Proposition \ref{prop:FEW} and the standard inequality $\binom{n}{k'} \leq k' \log \frac{ne}{k'}$, to show our result, it suffices to show that 
$$\min_{m=\floor{\frac{k'k}{n}},\floor{\frac{k'k}{n}}+1,\ldots,\ell} \phi_{k'}(m)-\min_{m=\ell,\ell+1,\ldots,2\ell} \phi_{k'}(m)=\Theta \left(\frac{k'}{\beta_{\mathrm{Bayes}}}\log \frac{n}{k'}\right)$$with high probability as $n \rightarrow +\infty,$ or equivalently, given the Definition of $\beta_{\mathrm{Bayes}}$, that 
$$\min_{m=\floor{\frac{k'k}{n}},\floor{\frac{k'k}{n}}+1,\ldots,\ell} \phi_{k'}(m)-\min_{m=\ell,\ell+1,\ldots,2\ell} \phi_{k'}(m)=\Theta \left( \frac{k'k}{\lambda n}\log \frac{n}{k'}\right),$$with high probability as $n \rightarrow +\infty.$

Now based on Corollary \ref{cor:approx} it suffices to show
$$\min_{m=\floor{\frac{k'k}{n}},\floor{\frac{k'k}{n}}+1,\ldots,\ell} \Gamma_{k'}(m)-\min_{m=\ell,\ell+1,\ldots,2\ell} \Gamma_{k'}(m)=\Theta \left( \frac{k'k}{\lambda n}\log \frac{n}{k'}\right).$$

Since $\ell < \frac{1}{2}\ell_c$ based on parts (a) and (b) of Theorem \ref{thm:FM2} we know that $\Gamma_{k'}(m)$ is increasing in the regime of $m=0,1,2,\ldots,2\ell$ and therefore 

\begin{align}\label{monoton}\min_{m=\floor{\frac{k'k}{n}},\floor{\frac{k'k}{n}}+1,\ldots,\ell} \Gamma_{k'}(m)-\min_{m=\ell,\ell+1,\ldots,2\ell} \Gamma_{k'}(m)=\Gamma_{k'}(\floor{\frac{k'k}{n}})-\Gamma_{k'}(\ell).\end{align} 

Furthermore based on parts (a) and (b) of Theorem \ref{thm:FM2} we have for each $\ell' \in [0,\ell] \cap \mathbb{N}$, that it holds $$\Gamma_{k'}\left(\ell'+1\right)-\Gamma_{k'}\left(\ell'\right)=\Theta\left(\sqrt{\frac{ k'}{2n  \log \left(\frac{n}{k'}\right)}} \log \left(\frac{\left(\ell+1\right) n}{k' k }\right)\right).$$

Therefore from telescopic summation, we have
\begin{align}\label{telesc}\Gamma_{k'}(\floor{kk'/n})-\Gamma_{k'}(\ell)= \sqrt{\frac{k'}{n \log \frac{n}{k'}}} \log \left( \prod_{m=\floor{kk'/n}}^{\ell} \frac{\left(m+1\right)n}{k'k}\right).\end{align}

Now by Lemma \ref{lem:binom}, since $\ell=\omega\left(\floor{kk'/n}\right)$,
\begin{align}\label{assympt}
\log \left( \prod_{m=\floor{kk'/n}}^{\ell} \frac{\left(m+1\right)n}{k'k}\right)&=\log \left(\frac{ \left(\ell+1\right)! }{\left(\floor{k'k/n}+1\right)!}\right)+\left(\ell-\floor{k'k}{n}\right)\log \frac{n}{k'k}\\
&=\Theta\left(\ell\log \left(\frac{\left(\ell+1\right) n}{k' k }\right)  \right).\end{align}

Combining \eqref{monoton}, \eqref{telesc} and \eqref{assympt} we conclude 
\begin{align}\label{final1}
\min_{m=\floor{\frac{k'k}{n}},\floor{\frac{k'k}{n}}+1,\ldots,\ell} \Gamma_{k'}(m)-\min_{m=\ell,\ell+1,\ldots,2\ell} \Gamma_{k'}(m)=\Theta\left(\sqrt{\frac{k'}{n \log \frac{n}{k'}}} \ell\log \left(\frac{\left(\ell+1\right) n}{k' k }\right)  \right)
\end{align}
Now since $\ell=\Theta\left( \frac{k}{\lambda}\sqrt{\frac{k'}{n} \log \frac{n}{k'}} \right)$ and based on our assumption $\lambda=o(1)$ we conclude 
\begin{align}\label{final2}
\min_{m=\floor{\frac{k'k}{n}},\floor{\frac{k'k}{n}}+1,\ldots,\ell} \Gamma_{k'}(m)-\min_{m=\ell,\ell+1,\ldots,2\ell} \Gamma_{k'}(m)=\Theta\left(\frac{k'k}{n\lambda }\log \left(\frac{n}{k'}\right)\right)
\end{align}The proof is complete.
\end{proof}

\section{Proof of Theorem \ref{thm:OGPLow} - OGP}\label{sec:OGP}

We establish the following theorem, which implies as a direct corollary that $\Phi_{k'}$ exhibits the $k'$-OGP with $\zeta_{2,n}-\zeta_{1,n}=\omega\left(\sqrt{k'} \right),$ with high probability as $n \rightarrow +\infty,$ directly from \eqref{gapOGP} and Proposition \ref{prop:OGP}. 

\begin{theorem}\label{thm:OGP}Let $\lambda>0$ and $k,k',n \in \mathbb{N}$ with $k',k \leq n$ and $k,k',n \rightarrow + \infty$ and $k',k=o\left(n\right).$ Suppose Assumptions \ref{assum:LambdaMain} and \ref{assum:k'Main} hold. Then there exist constants $C_0>1,D_0>0$ and $\ell=\ell_n \in  \mathbb{N}$ such that

\begin{itemize}

    \item $\ell=o\left(\floor{kk'/n} \right), \ell=o\left( \min\{k',k\}\right),$ $\ell=O\left( \frac{k}{\lambda}\sqrt{ \frac{k'}{n} \log \frac{n}{k'}}\right)$ and $\ell=\omega\left(\sqrt{k'}\right)$, and
    
    \item with high probability as $n \rightarrow +\infty$,
 \begin{align} \label{gapOGP} \min_{\ell' \in (\frac{\ell}{3},\frac{\ell}{2})}\phi_{k'}\left(\ell'\right)  \geq \max\{ \phi_{k'}\left(\floor{k'k/n} \right),\phi_{k'}\left( C_0\ell\right)\}+ \mathrm{Gap}_n,
 \end{align}for 

 \begin{align} \label{gapOGPquant}
    \mathrm{Gap}_n:= D_0 \frac{k'k}{\lambda n}.
    \end{align}
\end{itemize}
In particular $\Phi_{k'}$ exhibits the $k'$-OGP with $\zeta_{2,n}-\zeta_{1,n}=\Theta\left(\ell\right)=\omega\left(\sqrt{k'} \right)$, with high probability as $n \rightarrow +\infty.$
\end{theorem}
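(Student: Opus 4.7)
The plan is to reduce the OGP assertion for $\phi_{k'}$ to an analogous ``peak" statement for the deterministic curve $\Gamma_{k'}$, which we have already analyzed in Theorem \ref{thm:FM2}. The crucial input is Corollary \ref{cor:approx}, which lets us replace $\phi_{k'}(m)$ by $\Gamma_{k'}(m)$, uniformly for $m = O\bigl(\tfrac{k}{\lambda}\sqrt{\tfrac{k'}{n}\log\tfrac{n}{k'}}\bigr)$, up to an additive error that is $o\bigl(\tfrac{k'k}{\lambda n}\bigr)$, i.e.\ strictly smaller than $\mathrm{Gap}_n$. Thus any gap of order $\tfrac{k'k}{\lambda n}$ that we exhibit in $\Gamma_{k'}$ will transfer to $\phi_{k'}$ with only a constant-factor loss in $D_0$.

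First I would fix a small constant $\delta_1 \in (0, 1/4)$ and define $\ell := 2\lfloor(1-\delta_1)\ell_c\rfloor$, where $\ell_c$ is the critical scale from Theorem \ref{thm:FM2}. By the opening part of that theorem, $\ell_c = \omega(k'k/n)$, $\ell_c = o(\min\{k',k\})$, and $\ell_c = O\bigl(\tfrac{k}{\lambda}\sqrt{\tfrac{k'}{n}\log\tfrac{n}{k'}}\bigr)$, so $\ell$ inherits all four asymptotic properties claimed in the theorem. In particular, $\ell = \omega(\sqrt{k'})$ follows because $\ell/\sqrt{k'} = \Theta\bigl(\tfrac{k}{\lambda\sqrt{n}}\sqrt{\log(n/k')}\bigr)$ and the upper bound $\lambda = o\bigl(\tfrac{k}{\sqrt{n}\log n}\bigr)$ from Assumption \ref{assum:LambdaMain} drives this ratio to infinity.

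Next I apply the monotonicity analysis of Theorem \ref{thm:FM2}. With the choice of $\ell$ above, the window $(\ell/3, \ell/2)$ is contained in $\bigl((2/3)(1-\delta_1)\ell_c,\, (1-\delta_1)\ell_c\bigr)$, which by part (a) of Theorem \ref{thm:FM2} is inside the increasing regime of $\Gamma_{k'}$. Hence
\[
\min_{\ell' \in (\ell/3,\ell/2)} \Gamma_{k'}(\ell') \;=\; \Gamma_{k'}\!\bigl(\lceil \ell/3\rceil\bigr) \;=\; \Gamma_{k'}\!\bigl(\lfloor(1-\delta_2)\ell_c\rfloor\bigr) + O(1),
\]
where $\delta_2 := 1 - (2/3)(1-\delta_1) \in (\delta_0, \delta_1')$ for some fixed constants $0 < \delta_0 < \delta_1' < 1$. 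Part (d) of Theorem \ref{thm:FM2} applied with $\delta = \delta_2$ then yields a constant $C>10$ and a constant $D>0$ such that
\[
\Gamma_{k'}\!\bigl(\lfloor(1-\delta_2)\ell_c\rfloor\bigr) \;\geq\; \max\!\bigl\{\Gamma_{k'}(\lfloor k'k/n\rfloor),\, \Gamma_{k'}(C\lceil \ell_c\rceil)\bigr\} + D\,\tfrac{k'k}{\lambda n}.
\]
Finally, choose $C_0 := \lceil C/(2(1-\delta_1))\rceil$, so that $C_0\ell \geq C\lceil\ell_c\rceil \geq 10\lceil \ell_c\rceil$. Part (c) of Theorem \ref{thm:FM2} makes $\Gamma_{k'}$ monotonically decreasing on $[10\lceil\ell_c\rceil,\infty)$, so $\Gamma_{k'}(C_0\ell) \leq \Gamma_{k'}(C\lceil \ell_c\rceil)$.

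Putting these pieces together, for every $\ell' \in (\ell/3, \ell/2)$,
\[
\Gamma_{k'}(\ell') \;\geq\; \max\!\bigl\{\Gamma_{k'}(\lfloor k'k/n\rfloor),\, \Gamma_{k'}(C_0\ell)\bigr\} + D\,\tfrac{k'k}{\lambda n}.
\]
Invoking Corollary \ref{cor:approx} at each of the three overlaps $\lfloor k'k/n\rfloor$, $\ell'$, $C_0\ell$ (all of which lie in the range of validity because $C_0\ell = \Theta(\ell_c)$), we convert this into
\[
\min_{\ell' \in (\ell/3,\ell/2)}\phi_{k'}(\ell') \;\geq\; \max\!\bigl\{\phi_{k'}(\lfloor k'k/n\rfloor),\, \phi_{k'}(C_0\ell)\bigr\} + D\,\tfrac{k'k}{\lambda n} - o\!\Bigl(\tfrac{k'k}{\lambda n}\Bigr)
\]
with high probability, and the additive $o$-error can be absorbed into the constant $D$ by taking $D_0 := D/2$. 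This establishes \eqref{gapOGP} with $\mathrm{Gap}_n = D_0 \tfrac{k'k}{\lambda n}$. The $k'$-OGP statement with $\zeta_{2,n} - \zeta_{1,n} = \Theta(\ell) = \omega(\sqrt{k'})$ then follows directly from Proposition \ref{prop:OGP}, applied with $z_1 = \lceil\ell/3\rceil$, $z_2 = \lfloor \ell/2\rfloor$, $\ell_1 = \lfloor k'k/n\rfloor$, and $\ell_2 = C_0\ell$.

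The main obstacle in this plan is purely bookkeeping: one must carefully align the constants $\delta_1, \delta_2, C, C_0, D$ so that part (d) of Theorem \ref{thm:FM2} applies (which requires $\delta_2$ bounded away from $0$ and $1$), while simultaneously keeping $C_0 \ell$ both beyond $C\lceil\ell_c\rceil$ (to use part (c)) and within the regime where Corollary \ref{cor:approx} gives a controlled approximation error. None of these is genuinely hard because all the exponents and constants are absolute, but the statement of Theorem \ref{thm:FM2} is intricate enough that writing these verifications clearly is the bulk of the work.
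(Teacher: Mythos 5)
Your proposal is correct and follows essentially the same route as the paper: reduce to the deterministic curve $\Gamma_{k'}$ via Corollary~\ref{cor:approx} and then invoke Theorem~\ref{thm:FM2}(d), with parts (a) and (c) handling the monotonicity on either side of the well. The only difference is cosmetic — the paper sets $\ell = \ell_c$ directly (applying part (d) with $\delta$ ranging over a fixed interval to cover $\ell' \in (\ell/3,\ell/2)$), whereas you take $\ell = 2\lfloor(1-\delta_1)\ell_c\rfloor$ and are slightly more explicit about integrality and the constant bookkeeping for $C_0$.
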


\begin{proof}[Proof of Theorem \ref{thm:OGP}] 

Using the notation of Theorem \ref{thm:FM2} we choose $\ell=\ell_c$ where $\ell_c$ is defined in Theorem \ref{thm:FM2}. Both parts follow easily from Theorem \ref{thm:FM2}.

For the first part of Theorem \ref{thm:OGP}, the $\ell=o\left(\floor{kk'/n} \right), \ell=o\left( \min\{k',k\}\right), \ell=O\left( \frac{k}{\lambda}\sqrt{ \frac{k'}{n} \log \frac{n}{k'}}\right)$ follow from the first part of Theorem \ref{thm:FM2}. For the last asymptotic equality, notice that using $\lambda=o\left(\min\left\{1,\frac{k}{\sqrt{n}\log n}\right\}\right)$,
\begin{align*}
    \ell&=\frac{1}{2\lambda}k\sqrt{\frac{k'}{n  \log \left(\frac{n}{k'}\right)}} \log \left( \frac{1}{2\lambda}\sqrt{\frac{n}{k'  \log \left(\frac{n}{k'}\right)}}  \right)\\
    &=\Omega\left(\sqrt{k'} \log n \sqrt{\frac{1}{  \log \left(\frac{n}{k'}\right)}} \log \left( \frac{n}{k'}  \right)\right)\\
    &= \omega\left(\sqrt{k'}\right).
\end{align*}

For the second part using Corollary \ref{cor:approx} it suffices to show for the curve $\Gamma_{k'},$
\begin{align} \label{gapOGP2} \min_{\ell' \in (\frac{\ell}{3},\frac{\ell}{2})}\Gamma_{k'}\left(\ell'\right)  \geq \max\{ \Gamma_{k'}\left(\floor{k'k/n} \right),\Gamma_{k'}\left( C_0\ell\right)\}+ \Omega\left(\frac{k'k}{\lambda n}\right).
 \end{align}Notice that since $\ell=\ell_c$ this follows from part (d) of Theorem \ref{thm:FM2} by choosing $\delta_0=\frac{1}{3},\delta_1=\frac{1}{2}$ and $C_0>10.$
\end{proof}

\section{Proof of High Temperature Results}\label{sec:pf-high-temp}

Here we prove our main lower bound on depth (Theorem~\ref{thm:ent-depth} and Corollary~\ref{cor:hightemp-final}) in the high-temperature regime. The proof follows an argument used by \cite{alg-tensor-pca} to show the existence of free energy wells in tensor PCA.

\begin{proof}[Proof of Theorem~\ref{thm:ent-depth}]
Recall $H(v) = -v^\top Y v$ where $Y = \frac{\lambda}{k} xx^\top + W$ with $W \sim \mathrm{GOE}(n)$. Here $x,v \in \{0,1\}^n$ with $\|x\|_0 = k$ and $\|v\|_0 = k'$. For $S \subseteq \{0,1\}^n$ let
\[ Z_\beta(S) = \sum_{v \in S} e^{-\beta H(v)}. \]
Note that
\begin{equation}\label{eq:D1}
D_{\beta,\ell} = \log \mu_\beta(A) - \log \mu_\beta(B) = \log\left(\frac{Z_\beta(A)}{Z_\beta}\right) - \log\left(\frac{Z_\beta(B)}{Z_\beta}\right) = \log Z_\beta(A) - \log Z_\beta(B).
\end{equation}

The first step will be to reduce to the case of the pure noise Hamiltonian $\tilde H(v) = -v^\top W v$. Define $\tilde Z_\beta(S)$ accordingly:
\[ \tilde Z_\beta(S) = \sum_{v \in S} e^{-\beta \tilde H(V)}. \]
Since $H(v) = -\frac{\lambda}{k} \langle v,x \rangle^2 + \tilde H(v)$ we have
\begin{align}
\log Z_\beta(A) - \log Z_\beta(B) &= \log \sum_{v \in A} \exp\left(\frac{\beta\lambda}{k} \langle v,x \rangle^2 - \beta \tilde H(v)\right) - \log \sum_{v \in B} \exp\left(\frac{\beta\lambda}{k} \langle v,x \rangle^2 - \beta \tilde H(v)\right) \nonumber\\
&\ge \log \sum_{v \in A} \exp\left( - \beta \tilde H(v)\right) - \log \sum_{v \in B} \exp\left(\frac{\beta\lambda}{k} (2\ell)^2 - \beta \tilde H(v)\right) \nonumber\\
&= -\frac{4\beta\lambda \ell^2}{k} + \log \tilde{Z}_\beta(A) - \log \tilde{Z}_\beta(B).
\label{eq:D2}
\end{align}

Also,
\begin{equation}\label{eq:D3}
\frac{\tilde Z_\beta(A)}{\tilde Z_\beta(B)} = \frac{\tilde \mu_\beta(A)}{\tilde \mu_\beta(B)} \ge \frac{\tilde \mu_\beta(A)}{\tilde \mu_\beta(A^c)} = \frac{1 - \tilde \mu_\beta(A^c)}{\tilde \mu_\beta(A^c)}
\end{equation}
where $A^c = \{v \in \{0,1\}^n \,:\, \|v\|_0 = k'\} \setminus A$ and $\tilde \mu_\beta$ is the pure noise Gibbs measure $\tilde \mu_\beta(v) \propto e^{-\beta \tilde H(v)}$.

We next apply a symmetry argument to upper bound $\tilde\mu_\beta(A^c)$ in terms of $|A^c|$. Since the distribution of $W$ is spherically symmetric, for any fixed signal $x$ we have
\[ \Ex_W [\tilde\mu_\beta(A^c)] = \frac{|A^c|}{{n \choose k'}} \]
and so by Markov's inequalty, with probability (over $W$) at least $1-\gamma$,
\begin{equation}\label{eq:D4}
\tilde \mu_\beta(A^c) \le \frac{|A^c|}{{n \choose k'}\gamma}.
\end{equation}
Conditioned on this event, and provided $\frac{|A^c|}{{n \choose k'}\gamma} \le \frac{1}{2}$ (which will be ensured by our eventual choice of $\gamma$), we can combine \eqref{eq:D1},\eqref{eq:D2},\eqref{eq:D3},\eqref{eq:D4} to obtain
\begin{equation}\label{eq:D-bound} 
D_{\beta,\ell} \ge -\frac{4 \beta \lambda \ell^2}{k} + \log\left(\frac{1-\tilde\mu_\beta(A^c)}{\tilde\mu_\beta(A^c)}\right) \ge -\frac{4 \beta \lambda \ell^2}{k} + \log\left(\frac{{n \choose k'} \gamma}{2 |A^c|}\right).
\end{equation}

It remains to upper bound $|A^c|$. We have
\[ \frac{|A^c|}{{n \choose k'}} = \sum_{t \in \mathbb{Z} \,:\, \ell \le t \le \min\{k,k'\}} \alpha_t \]
where
\[ \alpha_t := \frac{{k \choose t}{n-k \choose k'-t}}{{n \choose k'}} \le \left(\frac{ke}{t}\right)^t \frac{{n \choose k'-t}}{{n \choose k'}} \]
using the fact that ${k \choose t} \le \left(\frac{ke}{t}\right)^t$ for all $1 \le t \le k$.

We can bound this expression with the help of the following technical lemma, whose proof is deferred to Appendix~\ref{app:proof-binom-ratio}.
\begin{lemma}\label{lem:binom-ratio}
Fix a constant $\delta > 0$. For all $n$ exceeding some constant $n_0 = n_0(\delta)$, for all integers $k'$ and $t$ satisfying $0 \le t \le k' \le n^{1-\delta}$ and $k' \ge 1$,
\[ \log \frac{{n \choose k'-t}}{{n \choose k'}} \le -t (1-\delta) \log\left(\frac{n}{k'}\right). \]
\end{lemma}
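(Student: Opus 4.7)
\textbf{Proof Plan for Lemma~\ref{lem:binom-ratio}.}
The plan is to write the binomial ratio as a telescoping product and bound it factor by factor. Specifically,
\[
\frac{\binom{n}{k'-t}}{\binom{n}{k'}}
= \frac{k'!\,(n-k')!}{(k'-t)!\,(n-k'+t)!}
= \prod_{i=0}^{t-1}\frac{k'-i}{n-k'+i+1}.
\]
Each factor is bounded by the first one, $\tfrac{k'}{n-k'+1}$, so taking logs gives
\[
\log\frac{\binom{n}{k'-t}}{\binom{n}{k'}} \;\le\; t\bigl[\log k' - \log(n-k'+1)\bigr].
\]
Thus it suffices to prove that $\log(n-k'+1) - \log k' \ge (1-\delta)\log(n/k')$, i.e., $\log(n-k'+1) \ge (1-\delta)\log n + \delta \log k'$.

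For the final inequality I would use the assumption $k' \le n^{1-\delta}$ in two places. First, this bound implies $n - k' + 1 \ge n - n^{1-\delta} \ge n/2$ for all $n$ exceeding some $n_0(\delta)$, so $\log(n-k'+1) \ge \log n - \log 2$. Second, it implies $\log k' \le (1-\delta)\log n$, and hence
\[
(1-\delta)\log n + \delta \log k' \;\le\; (1-\delta)\log n + \delta(1-\delta)\log n \;=\; (1-\delta^2)\log n.
\]
Combining, it is enough to ensure $\log n - \log 2 \ge (1-\delta^2)\log n$, equivalently $\delta^2 \log n \ge \log 2$, which holds once $n \ge n_0(\delta) := \exp(\log 2/\delta^2)$ (together with the earlier requirement). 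This closes the argument.

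The proof is essentially routine; the only subtlety is to make sure the slack between $\log(n-k'+1)$ and $\log n$ (the ``$-\log 2$'' term) is absorbed by the gap between $\log k'$ and $(1-\delta)\log n$, and this is precisely where the hypothesis $k' \le n^{1-\delta}$ enters with room to spare. I do not anticipate any serious obstacle; the choice of $n_0(\delta)$ is explicit, and the case $t=0$ is trivial (both sides are $0$) so we can assume $t \ge 1$ throughout.
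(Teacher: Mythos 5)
Your proof is correct, and it takes a genuinely different and more elementary route than the paper's. The paper splits into two cases: for $t \ge ck'$ it uses the crude bounds $(n/k')^{k'} \le \binom{n}{k'} \le n^{k'}$, and for $t < ck'$ it invokes Stirling's approximation, which leaves an additive constant $C$ that must then be absorbed using $\log(n/k') \ge \delta \log n$. Your telescoping-product identity
\[
\frac{\binom{n}{k'-t}}{\binom{n}{k'}} = \prod_{i=0}^{t-1}\frac{k'-i}{n-k'+i+1} \le \left(\frac{k'}{n-k'+1}\right)^{t}
\]
handles all values of $t$ uniformly, avoids Stirling entirely, and reduces the lemma to the single clean inequality $\log(n-k'+1) \ge (1-\delta)\log n + \delta\log k'$, whose only slack is the $\log 2$ coming from $n - k' + 1 \ge n/2$; that slack is absorbed by the $\delta^2\log n$ gap exactly as you describe, with an explicit $n_0(\delta) = 2^{1/\delta^2}$. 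What the paper's approach buys is essentially nothing here that yours does not; yours is shorter, gives an explicit threshold, and the boundary cases $t=0$ and $t=k'$ are handled transparently by the empty/full product. The one point worth stating explicitly in a final write-up is that every factor in the product is positive and bounded by the $i=0$ factor because the numerator is decreasing and the denominator increasing in $i$ — but you have implicitly used exactly this, so there is no gap.
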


Using Lemma~\ref{lem:binom-ratio},
\[ \alpha_t \le \left(\frac{ke}{t}\right)^t \left(\frac{k'}{n}\right)^{t(1-\delta)} = \left(\frac{ke}{t} \left(\frac{k'}{n}\right)^{1-\delta}\right)^t. \]
Provided $\ell \ge 2e k (k'/n)^{1-\delta}$ so that $\alpha_t \le 2^{-t}$ for all $t \ge \ell$, we now have
\begin{equation}\label{eq:A-small}
\frac{|A^c|}{{n \choose k'}} \le 2^{1-\ell}.
\end{equation}

Plugging this back into~\eqref{eq:D-bound}, we have now shown that with probability $1-\gamma$, the depth is
\[ D_{\beta,\ell} \ge -\frac{4 \beta \lambda \ell^2}{k} + (\ell-2) \log 2 + \log \gamma \]
provided $\gamma \ge 2^{-(\ell-2)}$ and $\ell \ge 2e k (k'/n)^{1-\delta}$. Choosing $\gamma = 2^{-(\ell-2)/2}$ completes the proof.
\end{proof}

We now turn to proving Corollary~\ref{cor:hightemp-final}. We first prove a more general statement in the setting where $\ell$ is constrained to some interval $L_1 \le \ell \le L_2$. Corollary~\ref{cor:hightemp-final} will then follow by specializing to our range of informative $\ell$ values.

\begin{corollary}\label{cor:ent-depth}
Fix a constant $\delta > 0$ and suppose $n \ge n_0(\delta)$ and $k' \le n^{1-\delta}$. Let
\[ \max\{1/2, 2ek(k'/n)^{1-\delta}\} \le L_1 \le L_2 \le \frac{1}{2}\min\{k,k'\}. \]
Let $B_1 = \frac{\log 2}{16}\frac{k}{\lambda L_1}$. For any $0 \le \beta \le B_1$ there exists $L_1 \le \ell \le L_2$ such that with probability at least $1-2^{-(\ell-2)/2}$,
\[ D_{\beta,\ell} \ge \frac{\log 2}{4} \min\left\{\frac{\log 2}{16} \frac{k}{\beta \lambda}, L_2\right\} - \log 2. \]
\end{corollary}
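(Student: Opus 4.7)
The plan is to reduce the corollary to an optimization over $\ell$ in the lower bound provided by Theorem~\ref{thm:ent-depth}. Define the function
\[ f(\ell) := -\frac{4\beta\lambda}{k}\ell^2 + \frac{\log 2}{2}\ell - \log 2, \]
which is a concave quadratic in $\ell$ with unconstrained maximizer $\ell^\star := \frac{k\log 2}{16\beta\lambda}$ and maximum value $f(\ell^\star) = \frac{(\log 2)^2}{64}\cdot\frac{k}{\beta\lambda}-\log 2$. I will choose $\ell = \min\{\ell^\star,L_2\}$ and verify that Theorem~\ref{thm:ent-depth} applies with this choice.

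First I would verify the hypotheses of Theorem~\ref{thm:ent-depth} for $\ell := \min\{\ell^\star,L_2\}$. The assumption $\beta \le B_1 = \frac{\log 2}{16}\cdot\frac{k}{\lambda L_1}$ rearranges to $\ell^\star \ge L_1$, so $\ell \ge L_1$; combined with $L_1 \ge \max\{1/2,\,2ek(k'/n)^{1-\delta}\}$ this gives both $2\ell \ge 1$ and the entropy condition $\ell \ge 2ek(k'/n)^{1-\delta}$. The upper bound $\ell \le L_2 \le \frac{1}{2}\min\{k,k'\}$ gives $2\ell \le \min\{k,k'\}$. Hence Theorem~\ref{thm:ent-depth} yields, with probability at least $1-2^{-(\ell-2)/2}$,
\[ D_{\beta,\ell} \ge f(\ell). \]

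Next I would handle the two cases in a straightforward case split. If $\ell^\star \le L_2$ then $\ell = \ell^\star$, so $D_{\beta,\ell} \ge f(\ell^\star) = \frac{(\log 2)^2}{64}\cdot\frac{k}{\beta\lambda}-\log 2$. If instead $\ell^\star > L_2$ then $\ell = L_2$, and the strict inequality $\beta\lambda L_2/k < (\log 2)/16$ immediately gives
\[ \frac{4\beta\lambda}{k}L_2^2 < \frac{\log 2}{4}L_2, \]
so that $f(L_2) > \tfrac{\log 2}{2}L_2 - \tfrac{\log 2}{4}L_2 - \log 2 = \tfrac{\log 2}{4}L_2 - \log 2$. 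Combining the two cases gives
\[ D_{\beta,\ell} \ge \frac{\log 2}{4}\min\!\left\{\frac{\log 2}{16}\cdot\frac{k}{\beta\lambda},\,L_2\right\} - \log 2, \]
which is the claimed bound.

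There is no real obstacle here: the content of the corollary is simply the correct choice of $\ell$, and the only things to be careful about are (i) checking that $\beta \le B_1$ is exactly the inequality needed to keep $\ell^\star$ above $L_1$, and (ii) handling the ``flat'' case $\ell^\star > L_2$ via the strict inequality $\beta\lambda L_2/k < (\log 2)/16$ rather than plugging into the formula for $f(\ell^\star)$. Everything else is arithmetic on $f$.
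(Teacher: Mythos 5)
Your proof is correct and follows essentially the same argument as the paper: the paper splits by whether $\beta$ lies above or below $B_2=\frac{\log 2}{16}\frac{k}{\lambda L_2}$, which is precisely your case split on whether $\ell^\star\le L_2$ or $\ell^\star> L_2$, and the arithmetic in each branch matches. The only cosmetic difference is that you package the choice as $\ell=\min\{\ell^\star,L_2\}$ and maximize the quadratic $f$ explicitly, while the paper writes the two cases out directly.
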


\begin{proof}
Let $B_2 = \frac{\log 2}{16} \frac{k}{\lambda L_2}$. First consider the case $B_2 \le \beta \le B_1$. In this case, set $\ell = \frac{\log 2}{16} \frac{k}{\beta \lambda}$ and note that this satisfies $L_1 \le \ell \le L_2$. Theorem~\ref{thm:ent-depth} gives $D_{\beta,\ell} \ge \frac{\log^2 2}{64} \frac{k}{\beta \lambda} - \log 2$. Now consider the case $0 \le \beta \le B_2$. In this case, set $\ell = L_2$, which means $\ell \le \frac{\log 2}{16} \frac{k}{\beta\lambda}$. Theorem~\ref{thm:ent-depth} gives $D_{\beta,\ell} \ge -\frac{4\beta\lambda}{k} \cdot \frac{\log 2}{16} \frac{k}{\beta\lambda} \cdot L_2 + \frac{\log 2}{2}L_2 - \log 2 = \frac{\log 2}{4} L_2 - \log 2$.
\end{proof}

\begin{proof}[Proof of Corollary~\ref{cor:hightemp-final}]
Apply Corollary~\ref{cor:ent-depth} with $L_1 = \max\left\{n^\delta,2ek(k'/n)^{1-\delta}\right\}$ and $L_2 = \frac{k}{2\lambda}\sqrt{\frac{k'}{n}}$. To ensure $L_1 \le L_2$ we need
\[ \lambda \le \min\left\{n^{-\delta} \frac{k}{2} \sqrt{\frac{k'}{n}}, \frac{1}{4e} \left(\frac{n}{k'}\right)^{1/2-\delta}\right\}. \]
We have
\[ B_1 = \frac{\log 2}{16} \frac{k}{\lambda} \min\{n^{-\delta},(2ek)^{-1}(n/k')^{1-\delta}\}
\ge \frac{\log 2}{16} n^{-\delta} \min\left\{\frac{k}{\lambda},\frac{n}{2e\lambda k'}\right\}. \]
\end{proof}

\appendix

\section{Additional Proofs}\label{app:technical}

\subsection{Proof of Remark~\ref{rem:tight}}
\label{app:pf-random-walk}

In this section we prove the following simple fact claimed in Remark~\ref{rem:tight}.

\begin{proposition}
Let $\mathcal{G}$ be the graph with vertex set $\{v \in \{0,1\}^n \,:\, \|v\|_0 = k'\}$, with an edge between each pair of vertices whose associated vectors differ in exactly 2 coordinates. Let $X_0, X_1, X_2, \ldots$ be the Markov chain on the vertices of $\mathcal{G}$ where $X_0$ is a uniformly random initialization and $X_{i+1}$ is a uniformly random neighbor of $X_i$. Fix a vertex $x$ and let $\tau$ be the hitting time $\tau = \inf\{t \in \mathbb{N} \,:\, X_\tau = x\}$. Then for any $t \ge 0$ we have $\Pr\{\tau \ge t\} \le k' n^{2k'}/t$.
\end{proposition}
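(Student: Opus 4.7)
The plan is to bound $\E[\tau]$ by a single application of the commute-time identity and then apply Markov's inequality. First I would identify $\mathcal{G}$ as the Johnson graph $J(n,k')$: it is vertex-transitive and $d$-regular with $N=\binom{n}{k'}$ vertices, degree $d=k'(n-k')$, and therefore $|E|=Nd/2=\binom{n}{k'}k'(n-k')/2$. Its diameter is $\min(k',n-k')\le k'$, and the chain described (uniform neighbor) is the simple random walk on $\mathcal{G}$, whose stationary distribution is uniform.

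Next, I would invoke the standard commute-time identity for the simple random walk on an undirected graph: for any two vertices $u,v$,
\[ \E_u[\tau_v] + \E_v[\tau_u] = 2|E|\,R_{\mathrm{eff}}(u,v), \]
together with the elementary fact that effective resistance is upper bounded by graph distance, $R_{\mathrm{eff}}(u,v)\le d_{\mathcal{G}}(u,v)$. This immediately yields $\E_u[\tau_x]\le 2|E|\cdot d_{\mathcal{G}}(u,x)\le 2|E|\cdot k'$. Substituting the formula for $|E|$ and using the crude bounds $\binom{n}{k'}\le n^{k'}$ and $n-k'\le n$ gives
\[ \E_u[\tau_x]\le \binom{n}{k'}\,k'(n-k')\,k' \le (k')^2\, n^{k'+1}. \]

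Averaging this bound against the uniform initialization $X_0$ gives $\E[\tau]\le (k')^2 n^{k'+1}$. An elementary induction shows $k'\le n^{k'-1}$ for every $k'\ge 1$ and $n\ge 2$, hence $(k')^2 n^{k'+1}\le k'\cdot n^{2k'}$. Markov's inequality then gives the desired tail bound
\[ \Pr\{\tau\ge t\}\le \frac{\E[\tau]}{t}\le \frac{k'\,n^{2k'}}{t}. \]

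No step here is delicate; the whole argument is a textbook application of the commute-time identity. The only mildly fiddly aspect is loosening the natural bound $(k')^2 n^{k'+1}$ to match the (weaker) stated bound $k' n^{2k'}$, but this comparison is immediate. One could alternatively derive $\E[\tau]$ directly from vertex-transitivity and the return-time formula $\E_x[\tau_x^+]=1/\pi(x)=N$, but the commute-time route is cleanest since it directly produces a polynomial bound via the diameter.
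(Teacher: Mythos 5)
Your proof is correct, but it takes a genuinely different route from the paper's. The paper's argument is a bare-hands "path-following" bound: from any current state there is a path of length $\ell\le k'$ to $x$, the walk follows that specific path with probability at least $d^{-k'}$ where $d=k'(n-k')\le n^2$ is the degree, so the number of length-$k'$ trials until success is dominated by a geometric variable with mean $d^{k'}\le n^{2k'}$; this gives $\E{\tau}\le k'n^{2k'}$ and Markov's inequality finishes. Your route through the commute-time identity $\estart{\tau_v}{u}+\estart{\tau_u}{v}=2|E|\,R_{\mathrm{eff}}(u,v)$ and the bound $R_{\mathrm{eff}}(u,v)\le d_{\mathcal G}(u,v)$ (Rayleigh monotonicity applied to a shortest path) is also valid for the simple random walk on the connected Johnson graph $J(n,k')$, and your bookkeeping is right: $2|E|=\binom{n}{k'}k'(n-k')$, diameter $\le k'$, hence $\E{\tau}\le (k')^2 n^{k'+1}\le k'n^{2k'}$ since $k'\le n^{k'-1}$ for $1\le k'\le n$. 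The trade-off is that your argument invokes the electrical-network machinery but produces a substantially sharper expected hitting time — polynomial in $\binom{n}{k'}$ rather than roughly its square — whereas the paper's argument is entirely self-contained and only needs the crude $\exp(\tilde O(k'))$ bound that both approaches deliver. Either proof suffices for the use made of the proposition in Remark~\ref{rem:tight}.
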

\begin{proof}
Let $y$ be the current state. There exists a path of length $\ell \le k'$ from $y$ to $x$. Since $\mathcal{G}$ is a regular graph of degree $d := k'(n-k') \le n^2$, the probability of following this path over the next $\ell$ steps (and thus reaching $x$) is at least $d^{-\ell} \ge d^{-k'}$. Let $N$ be the number of such trials (each consisting of at most $k'$ steps) before $x$ is reached. We have $\tau \le k' N$ and $\mathbb{E}[N] \le d^{k'} \le n^{2k'}$, so $\mathbb{E}[\tau] \le k' n^{2k'}$. The result follows by Markov's inequality.
\end{proof}

\subsection{Proof of Proposition \ref{prop:mcmc}}\label{app:mcmc}

\begin{proof}[Proof of Proposition \ref{prop:mcmc}]
Suppose $X_0 \sim \mu_\beta$ (not conditioning on $A$ yet) and $X_1,X_2,\ldots$ are drawn according to the Markov chain. Each $X_i$ is distributed according to $\mu_\beta$ (although they are not independent). Using Bayes' rule and a union bound,
\begin{align*}
\Pr\{\tau_\beta \le t\} &= \Pr\{\exists i \in \{1,\ldots,t\} : X_i \in B \,|\, X_0 \in A\} \\
&= \frac{\Pr\{X_0 \in A \text{ and } \exists i \in \{1,\ldots,t\} : X_i \in B\}}{\Pr\{X_0 \in A\}} \\
&\le \frac{\Pr\{\exists i \in \{1,\ldots,t\} : X_i \in B\}}{\Pr\{X_0 \in A\}} \\
&\le \frac{\sum_{i=1}^t \Pr\{X_i \in B\}}{\Pr\{X_0 \in A\}} \\
&= \frac{t \mu_\beta(B)}{\mu_\beta(A)} \\
&= t \exp(-D_{\beta,\ell}).
\end{align*}
\end{proof}

\subsection{Proof of Theorem~\ref{thm:main-informal}}\label{app:main-pf}

\begin{proof}[Proof of Theorem~\ref{thm:main-informal}]
First consider the low-temperature regime $\beta \gg \frac{\lambda n}{k}$, in which case our low-temperature bound gives a free energy well of depth $\Omega(k')$ provided that the additional assumption $k' \ll \sqrt[3]{\frac{k^2 n}{\lambda^2}}$ is satisfied. In the case (i) $k' \le k$, the additional assumption follows from the assumption $\lambda \ll 1$. In the case (ii) $\lambda \ll (k/n)^{1/4}$, the additional assumption follows from the ``informative'' $k'$ assumption $k' \le \frac{n \lambda^2}{\log n}$. Thus, in the low-temperature regime $\beta \gg \frac{\lambda n}{k}$ we have a free energy well of depth $\Omega(k')$. Using the ``informative'' $k'$ assumption $k' \ge \frac{k^2 \log n}{\lambda^2 n}$, this depth is $\Omega\left(\frac{k^2}{\lambda^2 n}\right)$ as desired. Here, $\beta \gg \frac{\lambda n}{k}$ specifically means $\beta \ge \frac{\lambda n}{k} \cdot \mathrm{polylog}(n)$; see Appendix~\ref{app:increase-lambda}.

Now consider the remaining temperature values $\beta \le \frac{\lambda n}{k} \cdot \mathrm{polylog}(n)$. In order for our high-temperature bound to cover this entire regime, we need $\frac{\lambda n}{k} \ll \min\{\frac{k}{\lambda},\frac{n}{\lambda k'}\}$. One bound $\frac{\lambda n}{k} \ll \frac{k}{\lambda}$ follows from the assumption $\lambda \ll k/\sqrt{n}$. The other bound $\frac{\lambda n}{k} \ll \frac{n}{\lambda k'}$, i.e., $\lambda^2 \ll k/k'$, can be shown as follows: if (i) $k' \le k$ then this follows from $\lambda \ll 1$, and if (ii) $\lambda \ll (k/n)^{1/4}$ then this follows from the ``informative'' $k'$ assumption $k' \le \frac{n \lambda^2}{\log n}$. Thus, the high-temperature bound applies for all $\beta \le \frac{\lambda n}{k} \cdot \mathrm{polylog}(n)$ and gives a free energy well of depth $\Omega\left(\min\left\{\frac{k}{\beta\lambda},\frac{k}{\lambda}\sqrt{\frac{k'}{n}}\right\}\right)$. Using the bounds $\beta \le \frac{\lambda n}{k} \cdot \mathrm{polylog}(n)$ and $k' \ge \frac{k^2 \log n}{\lambda^2 n}$, this depth is $\tilde\Omega\left(\frac{k^2}{\lambda^2 n}\right)$ as desired.
\end{proof}

\subsection{Extending the Range of $k'$ Values}
\label{app:increase-lambda}

As discussed in the main text, the assumption~\eqref{k'Assum1} on $k'$ used for our low-temperature results does not quite cover all the informative $k'$ values as defined in~\eqref{eq:k-good}. Here we explain a simple argument that allows us to obtain essentially the same (up to log factors) lower bound on $D_{\beta,\ell}$ as Theorem~\ref{thm:FEWLow} for the entire range of reasonable $k'$ values. Suppose we are in a setting where~\eqref{eq:k-good} is satisfied but not~\eqref{k'Assum1}. Choose a slightly larger $\lambda$ value $\tilde\lambda = \lambda \cdot \mathrm{polylog}(n)$ so that $\tilde\lambda$ satisfies~\eqref{k'Assum1}. Apply Theorem~\ref{thm:FEWLow} with $\tilde\lambda$ in place of $\lambda$ (and all other parameters unchanged); this requires the original $\lambda$ to satisfy a slight strengthening (by log factors) of the assumptions~\eqref{assum:lambda} and \eqref{addit_assum}. The result is
\[ D_{\beta,\ell}(\tilde\lambda) \ge d_1 \left(\frac{\beta}{\beta_{\mathrm{Bayes}}(\tilde\lambda)}-d_2\right)k'\log \left( \frac{n}{k'}\right) \]
where $D_{\beta,\ell}(\tilde\lambda)$ and $\beta_{\mathrm{Bayes}}(\tilde\lambda) = \frac{\tilde\lambda n}{2k}$ denote the corresponding quantities for $\tilde\lambda$. Here the value of $\ell$ satisfies $\ell=\Theta\left(\frac{k}{\tilde\lambda}\sqrt{\frac{k'}{n}\log \frac{n}{k'}}\right)$, which is informative (satisfying~\eqref{eq:ell-good}) for the original $\lambda$ value provided $\min\{1,kk'/n\} \ll \frac{k}{\lambda}\sqrt{\frac{k'}{n}}$ (which follows from $\lambda$ being in the hard regime: $\sqrt{k/n} \ll \ell \ll \min\{1,k/\sqrt{n}\}$). Lemma~\ref{lem:lambda-monotone} below shows that $D_{\beta,\ell}$ is monotone in $\lambda$ and so $D_{\beta,\ell}(\lambda) \ge D_{\beta,\ell}(\tilde\lambda)$. Thus we get a lower bound of $\Omega(k' \log(n/k'))$ on $D_{\beta,\ell}(\lambda)$ for all $\beta \ge C \beta_{\mathrm{Bayes}}(\tilde\lambda)$ where $C$ is a universal constant, i.e., for all $\beta \ge \beta_\mathrm{Bayes}(\lambda) \cdot \mathrm{polylog}(n)$. Thus we conclude the low-temperature lower bound (2) as stated in Section~\ref{sec:pf-tech}.

\begin{lemma}\label{lem:lambda-monotone}
If $\tilde\lambda \ge \lambda$ then $D_{\beta,\ell}(\lambda) \ge D_{\beta,\ell}(\tilde\lambda)$.
\end{lemma}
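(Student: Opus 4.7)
The plan is to show that $\lambda \mapsto D_{\beta,\ell}(\lambda)$ is a non-increasing function by a direct derivative computation, after which the result follows by integrating from $\lambda$ to $\tilde\lambda$. The intuition is transparent: increasing $\lambda$ strengthens the signal and hence shifts Gibbs mass from the low-overlap region $A$ toward the higher-overlap region $B$, so the log-ratio $\log \mu_\beta(A) - \log \mu_\beta(B)$ can only decrease.

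Concretely, I would first write $H_\lambda(v) = -\tfrac{\lambda}{k}\langle v,x\rangle^2 + \tilde H(v)$, where $\tilde H(v) = -v^\top W v$ does not depend on $\lambda$, and define the restricted partition functions $Z^{\lambda}_\beta(S) = \sum_{v \in S} \exp(-\beta H_\lambda(v))$ for $S \in \{A,B\}$. Since $D_{\beta,\ell}(\lambda) = \log Z^{\lambda}_\beta(A) - \log Z^{\lambda}_\beta(B)$, differentiating under the (finite) sum gives
\[
\frac{\partial}{\partial \lambda} \log Z^{\lambda}_\beta(S) \;=\; \frac{\beta}{k}\,\mathbb{E}_{v \sim \mu_\beta^\lambda(\,\cdot\,|\,S)}\!\left[\langle v,x\rangle^2\right],
\]
and therefore
\[
\frac{\partial}{\partial \lambda} D_{\beta,\ell}(\lambda) \;=\; \frac{\beta}{k}\left(\mathbb{E}_{v \sim \mu_\beta^\lambda(\,\cdot\,|\,A)}[\langle v,x\rangle^2] - \mathbb{E}_{v \sim \mu_\beta^\lambda(\,\cdot\,|\,B)}[\langle v,x\rangle^2]\right).
\]

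Now I would invoke the defining inequalities of $A$ and $B$ from Definition~\ref{def:FEW}: on $A$ we have $0 \le \langle v,x\rangle < \ell$, so $\langle v,x\rangle^2 < \ell^2$, while on $B$ we have $\langle v,x\rangle \ge \ell$, so $\langle v,x\rangle^2 \ge \ell^2$. The first conditional expectation is therefore strictly less than $\ell^2$ and the second is at least $\ell^2$, giving $\partial_\lambda D_{\beta,\ell}(\lambda) \le 0$. Integrating from $\lambda$ to $\tilde\lambda \ge \lambda$ yields $D_{\beta,\ell}(\tilde\lambda) \le D_{\beta,\ell}(\lambda)$, as required.

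There is no real obstacle here; the only thing to be mildly careful about is the degenerate cases (if $A$ or $B$ is empty for the given realization of $W$, or if $\beta = 0$, the statement is trivial or the expression is defined via the convention inherent in Definition~\ref{def:FEW}), and the fact that the sign of the derivative does not depend on $W$, so the monotonicity holds pointwise in the randomness and in particular on the high-probability events used in the application in Appendix~\ref{app:increase-lambda}.
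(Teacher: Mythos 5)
Your proposal is correct. The paper proves the same inequality by a direct pointwise comparison rather than by differentiation: it writes $\lambda\langle v,x\rangle^2 = \tilde\lambda\langle v,x\rangle^2 + (\lambda-\tilde\lambda)\langle v,x\rangle^2$ and uses the overlap separation to bound the correction term $(\lambda-\tilde\lambda)\langle v,x\rangle^2$ from below by $(\lambda-\tilde\lambda)\ell^2$ on $A$ and from above by the same quantity on $B$; the common factor $e^{\beta(\lambda-\tilde\lambda)\ell^2}$ then cancels in the ratio $Z_\beta(A)/Z_\beta(B)$, giving the monotonicity in one line. Your derivative computation is the "soft" version of the same idea: the derivative $\partial_\lambda D_{\beta,\ell} = \frac{\beta}{k}\bigl(\mathbb{E}_{\mu_\beta^\lambda(\cdot|A)}[\langle v,x\rangle^2] - \mathbb{E}_{\mu_\beta^\lambda(\cdot|B)}[\langle v,x\rangle^2]\bigr)$ is non-positive for exactly the reason the paper's pointwise bound works, namely that $\langle v,x\rangle^2 < \ell^2$ on $A$ and $\ge \ell^2$ on $B$. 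Both arguments are elementary and hold deterministically in $W$; yours makes the mechanism (signal mass flowing from $A$ to $B$ as $\lambda$ grows) slightly more transparent, while the paper's avoids any calculus and any discussion of differentiating under the sum. Your handling of the degenerate cases ($A$ or $B$ empty, $\beta=0$) is appropriate and matches what the paper implicitly assumes.
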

\begin{proof}
Using the definition of $D_{\beta,\ell}$ it suffices to show that
\begin{equation}\label{eq:goal}
    \frac{ \sum_{v \in A} e^{\beta \lambda \inner{x,v}^2+v^\top Wv}}{\sum_{v \in B} e^{\beta \lambda \inner{x,v}^2+v^\top Wv}} \geq \frac{ \sum_{v \in A} e^{\beta \tilde \lambda \inner{x,v}^2+v^\top Wv}}{\sum_{v \in B} e^{\beta \tilde \lambda \inner{x,v}^2+v^\top Wv}}.
\end{equation}

Notice though that for any $v \in A,$ $$\lambda \inner{v,x}^2 =\tilde \lambda \inner{v,x}^2+(\lambda-\tilde \lambda )\inner{v,x}^2 \geq  \tilde \lambda \inner{v,x}^2+(\lambda-\tilde \lambda )\ell $$ and for any $v \in B,$
$$\lambda \inner{v,x}^2 =\tilde \lambda \inner{v,x}^2+(\lambda-\tilde \lambda )\inner{v,x}^2 \leq  \tilde \lambda \inner{v,x}^2+(\lambda-\tilde \lambda )\ell. $$ 

Hence,
\begin{equation*}
    \frac{ \sum_{v \in A} e^{\beta \lambda \inner{x,v}^2+v^\top Wv}}{\sum_{v \in B} e^{\beta \lambda \inner{x,v}^2+v^\top Wv}} \geq \frac{ \sum_{v \in A} e^{\beta \tilde \lambda \inner{x,v}^2+(\lambda-\tilde \lambda )\ell+v^\top Wv}}{\sum_{v \in B} e^{\beta \tilde \lambda \inner{x,v}^2+(\lambda-\tilde \lambda )\ell+v^\top Wv}}=\frac{ \sum_{v \in A} e^{\beta \tilde \lambda \inner{x,v}^2+v^\top Wv}}{\sum_{v \in B} e^{\beta \tilde \lambda \inner{x,v}^2+v^\top Wv}}.
\end{equation*}
This completes the proof.
\end{proof}

\subsection{Auxiliary Lemmas}

\begin{lemma}\label{lem:calc}
Suppose $a_n,b_n, n\in \mathbb{N}$ are two positive-valued sequences with $\lim_n b_n = + \infty$. Then $$\limsup_n \frac{a_n \log a_n}{b_n}=\limsup_n \frac{a_n}{b_n  \log b_n}$$and $$\liminf_n \frac{a_n \log a_n}{b_n}=\liminf_n \frac{a_n}{b_n  \log b_n}.$$
\end{lemma}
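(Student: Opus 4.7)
The plan is to prove the $\limsup$ identity (the $\liminf$ case being fully symmetric) by reducing to the asymptotic equivalence $\log a_n \sim \log b_n$, which would allow me to replace $\log a_n$ by $\log b_n$ inside the left-hand quotient and match the right-hand quotient by direct algebra. The starting point is the trivial factorization
\[
\frac{a_n \log a_n}{b_n} \;=\; \frac{a_n}{b_n \log b_n}\cdot \bigl(\log a_n \log b_n\bigr),
\]
and the goal is to understand how the multiplicative log-log correction $\log a_n \log b_n$ behaves along the subsequences realizing the two $\limsup$s.

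First I would handle the degenerate limit values. If $\limsup_n a_n \log a_n / b_n \in \{0,+\infty\}$, then monotone comparison through the factorization above, together with $b_n \to +\infty$ (hence $\log b_n \to +\infty$), forces the corresponding value on the right-hand side to be $0$ or $+\infty$ respectively. In the generic case where both sides are conjectured to be positive and finite, I would pass to a subsequence $(n_k)$ realizing one of them, say $a_{n_k}\log a_{n_k}/b_{n_k} \to L \in (0,\infty)$. Positivity of $L$ combined with $b_{n_k}\to\infty$ would then force $a_{n_k}\log a_{n_k}\to\infty$ and hence $a_{n_k}\to\infty$. Taking logarithms of the relation $a_{n_k}\log a_{n_k}=(L+o(1))\,b_{n_k}$ and absorbing the lower-order $\log\log a_{n_k}$ term into $\log a_{n_k}$ would yield the key equivalence $\log a_{n_k}\sim \log b_{n_k}$. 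Substituting this into the factorization above is what the argument would rely on to close. The same argument, run starting from a subsequence realizing $\limsup_n a_n/(b_n\log b_n)$, would provide the reverse inequality.

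The main obstacle, and the step I expect to be the hardest to push through, is precisely reconciling the two sides once $\log a_n\sim\log b_n$ is in hand. Under that equivalence alone, the multiplicative factor $\log a_n\log b_n$ in the factorization does not collapse to a bounded quantity; on the contrary, it diverges whenever $a_n\to\infty$, so the limsups differ in general. A concrete test case is $a_n=n,\, b_n=n\log n$, where the left-hand $\limsup$ equals $1$ while the right-hand $\limsup$ equals $0$. Closing the proof as literally stated therefore requires identifying an additional implicit assumption on $(a_n, b_n)$ that the lemma tacitly relies on in each of its applications in the paper, and the bulk of the technical effort would go into making this assumption precise (plausibly a growth condition of the form $a_n \asymp b_n \log b_n$, which would absorb the extra $\log a_n\log b_n$ factor) and verifying that the applications actually fall within it. Once such a regime is pinned down, the reduction to $\log a_n \sim \log b_n$ yields the claim by the substitution outlined above.
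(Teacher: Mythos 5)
You are right that the lemma as printed is false, and your counterexample is valid: with $a_n=n$, $b_n=n\log n$ the left-hand $\limsup$ equals $1$ while the right-hand one is $0$ (even $a_n=b_n=n$ gives $+\infty$ versus $0$). The paper's own proof does not establish the literal statement either: it passes to a subsequence along which $\frac{a_n}{b_n\log b_n}\to C$, sandwiches $a_n$ between $(C\pm\epsilon)\,b_n\log b_n$, and then closes with a displayed inequality that does not typecheck; indeed, under that sandwich one has $\frac{a_n\log a_n}{b_n}\asymp C(\log b_n)^2\to\infty$, which is exactly the divergent factor $\log a_n\log b_n$ you isolated. So what you detected is a defect (almost surely a typo) in the statement itself, not a hidden hypothesis you failed to supply. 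One small inaccuracy in your write-up: for the literal statement, your claim that the value $+\infty$ transfers from left to right already fails for $a_n=b_n=n$; only the $0$ case transfers in that direction.

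Where your proposal goes astray is the suggested repair. Adding a growth hypothesis such as $a_n\asymp b_n\log b_n$ cannot rescue the printed identity: under it the left side is $\asymp(\log b_n)^2\to+\infty$ while the right side stays bounded, so no restriction on the sequences reconciles the two printed expressions except ones forcing both sides to degenerate. The repair consistent with both the paper's proof mechanics and the way Lemma~\ref{lem:calc} is invoked in the proof of Theorem~\ref{thm:FM2} (where it is used to replace $\log u_n$ by $\log v_n$ in a denominator for two polynomially related quantities) is to correct the expressions, e.g.\ to the identity $\limsup_n\frac{a_n\log a_n}{b_n\log b_n}=\limsup_n\frac{a_n}{b_n}$ (equivalently $\limsup_n\frac{a_n\log a_n}{b_n}=\limsup_n\frac{a_n\log b_n}{b_n}$), together with the $\liminf$ analogue. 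For that corrected statement your conditional argument is essentially identical to the paper's intended one: pass to a subsequence realizing the relevant $\limsup$ or $\liminf$; when the limit $C$ is finite and positive, taking logarithms of $a_n=(C+o(1))b_n$ (or of $a_n\log a_n=(C+o(1))b_n\log b_n$) and absorbing the $\log\log$ terms gives $\log a_n\sim\log b_n$, which lets you trade the two logarithmic factors; the cases $C=0$ and $C=+\infty$ follow by direct comparison, using that $a_n\ge b_n$ eventually along subsequences where $a_n/b_n\to\infty$. So, modulo the typo, your approach and the paper's coincide; the paper adds nothing beyond this subsequence-plus-log-equivalence mechanism, and cites an external calculus proposition for the $C=+\infty$ case rather than arguing it directly.
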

\begin{proof}
We prove the first equality, as the second equality follows by similar considerations.

By passing to a subsequence we can assume that  $\frac{a_n}{b_n  \log b_n}$ is converging to some nonnegative value $C \in [0,+\infty].$ If $C=+\infty$ the result follows by \cite[Proposition 4]{gamarnik2019landscape}. If $C<+ \infty$ then for any $\epsilon>0$ and sufficiently large $n$,
 $$(C-\epsilon)b_n \log b_n < a_n<(C+\epsilon)b_n \log b_n. $$
 In particular it holds 
 $$(C-\epsilon) \frac{\log ((C-\epsilon)b_n)}{\log b_n }\frac{a_n \log a_n}{b_n} \leq (C+\epsilon) \frac{\log ((C+\epsilon)b_n)}{\log b_n }.$$The fact that $C<+\infty$ and $\lim_n b_n=+\infty$ completes the proof.
\end{proof}

\begin{lemma}\label{comb:lemma}
Under the parameter assumptions of Theorem \ref{FM} there exists some sufficiently small constant $c>0$ such that the following holds. 
\begin{itemize}
    \item[(a)] For large values of $n$, $\max\{\frac{32 \ell^2}{k}, \frac{16 (k')^2}{n}\} \leq \frac{k'}{2}.$
    \item[(b)] Suppose  $k'-1 \geq m \geq \max\{\frac{32 \ell^2}{k}, \frac{16 (k')^2}{n}\}.$ Then
\begin{align}\label{ratioBin2}\frac{ |\{ v,u \in T_{\ell}: \inner{v,u}=m+1\}|}{|\{ v,u \in T_{\ell}: \inner{v,u}=m\}|} \leq \frac{1}{2},
\end{align}where $$T_{\ell}:=\{ v \in \{0,1\}^n: \|v\|_0=k', \inner{v,x}=\ell\}.$$
\end{itemize}
\end{lemma}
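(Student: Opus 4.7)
The two inequalities can be checked separately. The bound $16(k')^2/n\leq k'/2$ rearranges to $k'\leq n/32$, which holds for large $n$ since $k'=o(n)$. The bound $32\ell^2/k\leq k'/2$ rearranges to $\ell^2\leq kk'/64$, which follows from squaring the hypothesis $\ell\leq c\min\{k,k'\}$ of Theorem~\ref{FM} with the constant $c$ chosen small enough (e.g., $c\leq 1/8$).

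\textbf{Plan for Part (b).} I would take $\mathrm{supp}(x)=[k]$ without loss of generality. Then every $v\in T_\ell$ has exactly $\ell$ ones inside $[k]$ and $k'-\ell$ ones inside $[k+1,n]$, so the overlap of two vectors in $T_\ell$ decomposes as $\inner{v,u}=a+b$ with $a=|\mathrm{supp}(v)\cap\mathrm{supp}(u)\cap[k]|$ and $b=|\mathrm{supp}(v)\cap\mathrm{supp}(u)\cap[k+1,n]|$. Summing first over $v$ and then over $u$ with prescribed $a,b$ yields
\begin{align*}
|\{(v,u)\in T_\ell^2 : \inner{v,u}=m\}| \;=\; |T_\ell|\sum_{a+b=m} P(a)Q(b),
\end{align*}
with $P(a):=\binom{\ell}{a}\binom{k-\ell}{\ell-a}$ and $Q(b):=\binom{k'-\ell}{b}\binom{n-k-k'+\ell}{k'-\ell-b}$. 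Setting $N(m):=\sum_{a+b=m}P(a)Q(b)$, the inequality \eqref{ratioBin2} reduces to $N(m+1)\leq\tfrac{1}{2}N(m)$.

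The argument will hinge on the one-step ratio estimates
\begin{align*}
\frac{P(a)}{P(a-1)} \;=\; \frac{(\ell-a+1)^2}{a(k-2\ell+a)} \;\leq\; \frac{2\ell^2}{ak}, \qquad \frac{Q(b)}{Q(b-1)} \;\leq\; \frac{2(k')^2}{bn},
\end{align*}
valid for large $n$ since $\ell=o(k)$ and $k'=o(n)$ give $k-2\ell\geq k/2$ and $n-k-2(k'-\ell)\geq n/2$. Consequently $P(a)/P(a-1)\leq 1/4$ as soon as $a\geq 8\ell^2/k$, and $Q(b)/Q(b-1)\leq 1/4$ as soon as $b\geq 8(k')^2/n$. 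I would then split the sum defining $N(m+1)$ at the threshold $a=8\ell^2/k$. On indices with $a\geq 8\ell^2/k$, the $P$-bound gives $P(a)Q(b)\leq\tfrac{1}{4}P(a-1)Q(b)$, and after the shift $a\mapsto a-1$ the resulting sum is a subsum of $N(m)$, hence bounded by $N(m)$. On indices with $a<8\ell^2/k$, the hypothesis $m\geq 32\ell^2/k$ forces $b=m+1-a>3m/4\geq 12(k')^2/n\geq 8(k')^2/n$ (using $m\geq 16(k')^2/n$), so the $Q$-bound gives $P(a)Q(b)\leq\tfrac{1}{4}P(a)Q(b-1)$, and the shift $b\mapsto b-1$ again lands in a subsum of $N(m)$. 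Summing the two bounds yields $N(m+1)\leq\tfrac{1}{4}N(m)+\tfrac{1}{4}N(m)=\tfrac{1}{2}N(m)$.

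The main point of care, rather than a conceptual obstacle, will be calibrating the splitting threshold so that whenever the $P$-ratio bound fails (i.e., $a$ is too small), the complementary value of $b$ is automatically large enough for the $Q$-ratio bound to apply; the combined hypothesis $m\geq\max\{32\ell^2/k,16(k')^2/n\}$ is engineered for precisely this. Edge cases (notably $8\ell^2/k<1$ so that $a=0$ falls into the first piece) can be absorbed by reading the threshold as $\max(1,8\ell^2/k)$, without affecting the $(1/4)+(1/4)$ accounting; the requirement $b\geq 1$ in the second piece is automatic from $b>3m/4\geq 1$.
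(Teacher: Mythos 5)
Your proof is correct and follows essentially the same route as the paper's: the identical decomposition of the pair count into $|T_\ell|\sum_{a+b=m}P(a)Q(b)$ (the paper's index $m_0$ is your $a$), the same one-step ratio bounds for incrementing the inner overlap $a$ and the outer overlap $b$, and a two-piece split of the sum, the only difference being that you split at $a=8\ell^2/k$ and get $\tfrac14+\tfrac14$ where the paper splits at $m/4$ and $m/2$ and gets $\tfrac18+\tfrac18$. Part (a) is handled identically.
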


\begin{proof}
For the first part notice that since $k'=o(n)$ for large values of $n$, $16 \frac{(k')^2}{n} <\frac{k'}{2}.$ Furthermore by choosing $c< \frac{1}{64}$ we have that $\ell \leq c \min\{k',k\}$ implies $$\frac{32 \ell^2}{k}=32 \frac{\ell}{64} \frac{\ell}{k} <32 \frac{k'}{64}=\frac{k'}{2}.$$

We now turn to the second part.
Now notice that for each $m=0,1,2,\ldots,k',$ from elementary counting arguments based on $m_0$, the value of the common intersection size of the supports of all $v,u,x$,
$$|\{ v,u \in T_{\ell}: \inner{v,u}=m\}|=\binom{k}{\ell}\binom{n-k}{k-\ell}\left[\sum_{m_0=0}^{\min\{m,\ell\}} \binom{\ell}{m_0}\binom{k-\ell}{\ell-m_0}\binom{k'-\ell}{m-m_0}\binom{n-k-k'}{k'-m-\ell+m_0}\right].$$

Hence, for all $m,$ \begin{align}\label{ratioBin}\frac{ |\{ v,u \in T_{\ell}: \inner{v,u}=m+1\}|}{|\{ v,u \in T_{\ell}: \inner{v,u}=m\}|}=\frac{\sum_{m_0=0}^{\min\{m+1,\ell\}} \binom{\ell}{m_0}\binom{k-\ell}{\ell-m_0}\binom{k'-\ell}{m+1-m_0}\binom{n-k-k'}{k'-m-1-\ell+m_0}}{\sum_{m_0=0}^{\min\{m,\ell\}} \binom{\ell}{m_0}\binom{k-\ell}{\ell-m_0}\binom{k'-\ell}{m-m_0}\binom{n-k-k'}{k'-m-\ell+m_0}}.\end{align}

Recall the elementary identity that for any $a,b \in \mathbb{N}$ with $b \leq a$ it holds $\binom{a}{b+1}=\frac{a-b}{b+1}\binom{a}{b}.$ Hence, for all $m_0=0,1,2,\ldots,\min\{m,\ell\},$  it holds 
\begin{align*}
  \frac{\binom{\ell}{m_0}\binom{k-\ell}{\ell-m_0}\binom{k'-\ell}{m+1-m_0}\binom{n-k-k'}{k'-m-1-\ell+m_0}}{\binom{\ell}{m_0}\binom{k-\ell}{\ell-m_0}\binom{k'-\ell}{m-m_0}\binom{n-k-k'}{k'-m-\ell+m_0}} &= \frac{\binom{k'-\ell}{m+1-m_0}\binom{n-k-k'}{k'-m-1-\ell+m_0}}{\binom{k'-\ell}{m-m_0}\binom{n-k-k'}{k'-m-\ell+m_0}}\\
  & = \frac{k'-\ell-m+m_0}{m+1-m_0} \frac{k'-m-\ell+m_0}{n-k-2k'+m+1+\ell-m_0}.
\end{align*}We focus now on $m_0=0,1,2,\ldots,\min\{\floor{\frac{m}{2}},\ell\},$ where $m_0 \leq \frac{m}{2}$ holds. Combining $m_0 \leq \frac{m}{2}$ with the last displayed equation and $k,k'=o(n)$ we have that for large values of $n$, it holds
\begin{align*}
\frac{\binom{\ell}{m_0}\binom{k-\ell}{\ell-m_0}\binom{k'-\ell}{m+1-m_0}\binom{n-k-k'}{k'-m-1-\ell+m_0}}{\binom{\ell}{m_0}\binom{k-\ell}{\ell-m_0}\binom{k'-\ell}{m-m_0}\binom{n-k-k'}{k'-m-\ell+m_0}}\leq  \frac{(k')^2}{\frac{m}{2}\frac{n}{2}}=\frac{4(k')^2}{mn}.
\end{align*}Since by assumption $m \geq \frac{32(k')^2}{n}$  for $m_0=0,1,2,\ldots,\min\{\floor{\frac{m}{2}},\ell\},$ it holds 
\begin{align}\label{binom1}
\frac{\binom{\ell}{m_0}\binom{k-\ell}{\ell-m_0}\binom{k'-\ell}{m+1-m_0}\binom{n-k-k'}{k'-m-1-\ell+m_0}}{\binom{\ell}{m_0}\binom{k-\ell}{\ell-m_0}\binom{k'-\ell}{m-m_0}\binom{n-k-k'}{k'-m-\ell+m_0}} \leq \frac{1}{8}.
\end{align}

Notice that if $\floor{\frac{m}{4}}>\ell$ then \eqref{binom1} holds for all $m=0,1,2,\ldots,\min\{m,\ell\}=\ell.$ Now assuming that $\floor{\frac{m}{4}}<\ell$ we consider the regime where $m_0=\floor{\frac{m}{4}}+1,\ldots,\ldots,\min\{m,\ell\}$. In that regime, we first observe that
\begin{align*}
  \frac{\binom{\ell}{m_0+1}\binom{k-\ell}{\ell-m_0-1}\binom{k'-\ell}{m-m_0}\binom{n-k-k'}{k'-m-\ell+m_0}}{\binom{\ell}{m_0}\binom{k-\ell}{\ell-m_0}\binom{k'-\ell}{m-m_0}\binom{n-k-k'}{k'-m-\ell+m_0}}&=\frac{\binom{\ell}{m_0+1}\binom{k-\ell}{\ell-m_0-1}}{\binom{\ell}{m_0}\binom{k-\ell}{\ell-m_0}}\\
  &\leq \frac{(\ell-m_0)^2}{(m_0+1)(k-2\ell+m_0)}.
\end{align*}Now since $m_0 \geq m/4$ there exists a sufficiently small constant $c>0$ such that if $\ell<ck$ then for large values of $n$, it holds 
\begin{align*}
  \frac{\binom{\ell}{m_0+1}\binom{k-\ell}{\ell-m_0-1}\binom{k'-\ell}{m-m_0}\binom{n-k-k'}{k'-m-\ell+m_0}}{\binom{\ell}{m_0}\binom{k-\ell}{\ell-m_0}\binom{k'-\ell}{m-m_0}\binom{n-k-k'}{k'-m-\ell+m_0}} \leq \frac{8\ell^2}{mk}.
\end{align*}
Since by assumption $m \geq \frac{64 \ell^2}{k},$ for large values of $n$, for all $m_0=\floor{\frac{m}{4}}+1,\ldots,\ldots,\min\{m,\ell\}$ it holds 

\begin{align}\label{binom2}
  \frac{\binom{\ell}{m_0+1}\binom{k-\ell}{\ell-m_0-1}\binom{k'-\ell}{m-m_0}\binom{n-k-k'}{k'-m-\ell+m_0}}{\binom{\ell}{m_0}\binom{k-\ell}{\ell-m_0}\binom{k'-\ell}{m-m_0}\binom{n-k-k'}{k'-m-\ell+m_0}} \leq \frac{1}{8}.
\end{align}

Observe that the following quantity $$\sum_{m_0=0}^{m+1} \binom{\ell}{m_0}\binom{k-\ell}{\ell-m_0}\binom{k'-\ell}{m+1-m_0}\binom{n-k-k'}{k'-m-1-\ell+m_0}$$is at most
\begin{align*}
& \sum_{m_0=0}^{\floor{m/2}} \binom{\ell}{m_0}\binom{k-\ell}{\ell-m_0}\binom{k'-\ell}{m+1-m_0}\binom{n-k-k'}{k'-m-1-\ell+m_0}\\
&+\sum_{m_0=\ceil{m/4}+1}^{m+1} \binom{\ell}{m_0}\binom{k-\ell}{\ell-m_0}\binom{k'-\ell}{m+1-m_0}\binom{n-k-k'}{k'-m-1-\ell+m_0}\\
\end{align*}
which equals
\begin{align*}
& \sum_{m_0=0}^{\floor{m/2}} \binom{\ell}{m_0}\binom{k-\ell}{\ell-m_0}\binom{k'-\ell}{m+1-m_0}\binom{n-k-k'}{k'-m-1-\ell+m_0}\\
&+\sum_{m_0=\ceil{m/4}}^{m} \binom{\ell}{m_0+1}\binom{k-\ell}{\ell-m_0-1}\binom{k'-\ell}{m-m_0}\binom{n-k-k'}{k'-m-\ell+m_0},
\end{align*}where the inequality holds as all the terms are nonnegative and the equality  follows by a simple change of variables.

Now we further upper bound the last quantity by using \eqref{binom1} for the first summand and \eqref{binom2} for the second summand to get

\begin{align*}
&\sum_{m_0=0}^{m+1} \binom{\ell}{m_0}\binom{k-\ell}{\ell-m_0}\binom{k'-\ell}{m+1-m_0}\binom{n-k-k'}{k'-m-1-\ell+m_0}\\
&\leq \frac{1}{8}\sum_{m_0=0}^{\floor{m/2}} \binom{\ell}{m_0}\binom{k-\ell}{\ell-m_0}\binom{k'-\ell}{m-m_0}\binom{n-k-k'}{k'-m-\ell+m_0}\\
&+\frac{1}{8}\sum_{m_0=\ceil{m/4}}^{m} \binom{\ell}{m_0}\binom{k-\ell}{\ell-m_0}\binom{k'-\ell}{m-m_0}\binom{n-k-k'}{k'-m-\ell+m_0}\\
\end{align*}which is at most
\begin{align*}
& \left(\frac{1}{8}+\frac{1}{8}\right)\sum_{m_0=0}^{m}\binom{\ell}{m_0}\binom{k-\ell}{\ell-m_0}\binom{k'-\ell}{m-m_0}\binom{n-k-k'}{k'-m-\ell+m_0}\\
&=\frac{1}{4}\sum_{m_0=0}^{m}\binom{\ell}{m_0}\binom{k-\ell}{\ell-m_0}\binom{k'-\ell}{m-m_0}\binom{n-k-k'}{k'-m-\ell+m_0},
\end{align*}where the last inequality holds because all the terms are nonnegative. Using \eqref{ratioBin} completes the proof of the lemma.
\end{proof}

\begin{lemma}\label{lem:Small}
Under Assumptions \ref{assum:LambdaMain} and \ref{assum:k'Main}, the following holds.
If $\ell=\Theta\left( \frac{k}{\lambda}\sqrt{ \frac{k'}{n} \log \frac{n}{k'}}\right)$ then $\ell=o\left( \min\{k',k\}\right)$ and $\ell=\omega\left(\frac{k'k}{n} \right).$
\end{lemma}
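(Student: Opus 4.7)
The plan is to reduce the two claims to the asymptotic bounds already established for the quantity $\ell_c$ in the proof of Theorem \ref{thm:FM2}, together with elementary manipulations using Assumption \ref{assum:k'Main}. Recall that $\ell_c$ satisfies both $\ell_c = o(\min\{k', k\})$ and $\ell_c = \omega(k'k/n)$ under our standing assumptions, so if one can show $\ell = O(\ell_c)$, then the first claim $\ell = o(\min\{k', k\})$ follows immediately. The second claim $\ell = \omega(k'k/n)$ I plan to verify directly rather than via $\ell_c$, since the ratio $\ell_c/\ell$ can be as large as order $\log(1/\lambda)/\log(n/k')$, so no matching lower bound of $\ell$ in terms of $\ell_c$ is available for free.

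For $\ell = O(\ell_c)$, I would compare
\[
\ell = \Theta\!\left(\tfrac{k}{\lambda}\sqrt{\tfrac{k'}{n}\log(n/k')}\right), \quad \ell_c = \Theta\!\left(\tfrac{k}{\lambda}\sqrt{\tfrac{k'}{n\log(n/k')}}\,\log\!\left(\tfrac{1}{2\lambda}\sqrt{\tfrac{n}{k'\log(n/k')}}\right)\right),
\]
which yields $\ell_c/\ell = \Theta\!\left(\log\!\left(\tfrac{1}{2\lambda}\sqrt{\tfrac{n}{k'\log(n/k')}}\right)/\log(n/k')\right)$. Since $\lambda = o(1)$ makes $\log(1/(2\lambda)) \geq 0$ for large $n$ and $k' = o(n)$ makes $\log(n/k')\to\infty$ (so that $\log\log(n/k') \leq \tfrac{1}{2}\log(n/k')$ eventually), I would estimate
\[
\log\!\left(\tfrac{1}{2\lambda}\sqrt{\tfrac{n}{k'\log(n/k')}}\right) \;\geq\; \tfrac{1}{2}\log(n/k') - \tfrac{1}{2}\log\log(n/k') \;\geq\; \tfrac{1}{4}\log(n/k')
\]
for all sufficiently large $n$. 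This forces $\ell_c = \Omega(\ell)$, hence $\ell = O(\ell_c)$, and then Theorem \ref{thm:FM2} part 1 immediately gives $\ell = o(\min\{k', k\})$.

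For $\ell = \omega(k'k/n)$, I would compute directly
\[
\ell/(k'k/n) = \Theta\!\left(\tfrac{1}{\lambda}\sqrt{(n/k')\log(n/k')}\right),
\]
so it suffices to show that $k'\lambda^2/(n\log(n/k'))\to 0$. The middle hypothesis of \eqref{k'Assum1} states $k' = o(\lambda^2 n/\log^3 n)$; combined with $\lambda = o(1)$, this gives $k'\lambda^2/n = o(\lambda^4/\log^3 n) = o(1/\log^3 n)$. Since $k' = o(n)$ implies $\log(n/k')\geq 1$ for $n$ large, the desired ratio is $o(1)$. The only real (and minor) obstacle throughout is the bookkeeping in the first step, namely verifying $\log\!\bigl((2\lambda)^{-1}\sqrt{n/(k'\log(n/k'))}\bigr) \gtrsim \log(n/k')$, which licenses transferring the $o(\min\{k',k\})$ bound from $\ell_c$ to $\ell$ without redoing the calculus-lemma chain in the proof of Theorem \ref{thm:FM2}; everything else reduces to substituting the two bounds on $k'$ from Assumption \ref{assum:k'Main} into one-line asymptotic estimates.
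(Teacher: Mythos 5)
Your proof is correct, and it takes essentially the same route as the paper: both compare $\ell$ to the quantity $\ell_c$ from Theorem~\ref{thm:FM2} and then invoke that theorem's asymptotics for $\ell_c$. The paper's version is slightly more streamlined: it observes that $\lambda < 1$ and $\lambda > \sqrt{k'/n}$ (the latter a consequence of the middle hypothesis of Assumption~\ref{assum:k'Main}, namely $k' = o(\lambda^2 n/\log^3 n)$) together force $\log\bigl(\tfrac{1}{2\lambda}\sqrt{n/(k'\log(n/k'))}\bigr) = \Theta(\log(n/k'))$, hence $\ell = \Theta(\ell_c)$, and then both claims drop out of Theorem~\ref{thm:FM2} at once. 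You establish only the one-sided bound $\ell = O(\ell_c)$ (using only $\lambda < 1$) and then verify $\ell = \omega(k'k/n)$ by a self-contained calculation. Both are fine, but your stated reason for the split is slightly off: you suggest a matching lower bound $\ell_c = O(\ell)$ ``is not available for free,'' whereas in fact it is essentially free once one notes $\lambda > \sqrt{k'/n}$, which is exactly the same fact you use (via $k' = o(\lambda^2 n/\log^3 n)$) in your direct computation of the second claim. So the two arguments draw on identical inputs; the paper just packages them once as a two-sided $\Theta$-comparison while you package one direction as a comparison and the other as a direct estimate.
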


\begin{proof}
Define as in Theorem \ref{thm:FM2} $$\ell_c:=\frac{1}{2\lambda}k\sqrt{\frac{k'}{n  \log \left(\frac{n}{k'}\right)}} \log \left( \frac{1}{2\lambda}\sqrt{\frac{n}{k'  \log \left(\frac{n}{k'}\right)}}  \right).$$ Now notice that by Assumption \ref{assum:LambdaMain} it holds for sufficiently large $n$, $$\lambda<1$$ and by Assumption \ref{assum:LambdaMain}, $$\lambda>\sqrt{\frac{k'}{n}}.$$ Hence, $$\log \left( \frac{1}{2\lambda}\sqrt{\frac{n}{k'  \log \left(\frac{n}{k'}\right)}}\right)=\Theta\left(\log \frac{n}{k'} \right)$$ which implies $$\ell_c=\Theta\left( \frac{k}{\lambda}\sqrt{ \frac{k'}{n} \log \frac{n}{k'}}\right)$$ or $\ell=\Theta\left(\ell_c\right).$ The result follows from the first part of Theorem \ref{thm:FM2} on the asymptotic behavior of $\ell_c.$
\end{proof}

\subsection{Proof of Corollary~\ref{cor:approx}}\label{app:approx}

\begin{proof}[Proof of Corollary \ref{cor:approx}]

Based on Theorem \ref{FM} it suffices to show that under our assumptions,
\begin{align}\label{goalLem} \sqrt{\frac{(k')^2}{n \log \left( \binom{k}{\ell} \binom{n-k}{k'-\ell} \right)}}\left(\log n\right)^2+O\left(\sqrt{\frac{k'\log n}{n} \max\left\{\frac{ \ell^4}{k^2}, \frac{(k')^4}{n^2},k' \right\}}\right)=o\left(\frac{k'k}{\lambda n}\right).\end{align} 
Since $\ell=o(k')$ by Lemma \ref{lem:Small}, and $k,k'=o(n)$, for large enough values of $n$ it holds $$\ell \leq \frac{k'}{2}, k'  \leq \frac{n-k}{2}.$$ Therefore combining with the elementary inequality $\binom{a}{b} \geq \left(\frac{a}{b}\right)^b$ it holds $$\binom{k}{\ell} \binom{n-k}{k'-\ell} \geq  \binom{n-k}{\floor{k'/2}}  \geq \left( \frac{n-k}{\floor{k'/2}}\right)^{\floor{k'/2}}.$$ Since $k,k'=o(n)$ it also holds for large values of $n$ $$\frac{n-k}{\floor{k'/2}} \geq e^3$$which allows us to conclude that for large values of $n$, 
$$\log \left[\binom{k}{\ell} \binom{n-k}{k'-\ell} \right] \geq \floor{k'/2} \log \left( \frac{n-k}{\floor{k'/2}}\right) \geq  3\floor{k'/2} \geq k'.$$
Hence to show \eqref{goalLem} it suffices to show
$$\frac{k'k}{\lambda n}=\omega \left(\max\left\{\sqrt{\frac{k'\log n}{n} \max\left\{\frac{ \ell^4}{k^2}, \frac{(k')^4}{n^2},k' \right\}},\sqrt{\frac{k'}{n}}\left(\log n\right)^2\right\}\right).$$

We now compare the four terms. For the first term, we have that $$\frac{k'k}{\lambda n}=\omega\left( \sqrt{\frac{k' \log n}{n}\frac{\ell^4}{k^2}} \right)$$  if and only if $$\ell=o\left(\sqrt{\sqrt{\frac{k'}{n \log n}}\frac{k^2}{\lambda }}\right).$$
We have by construction $$\ell=O\left(\frac{k}{\lambda}\sqrt{\frac{k'}{n} \log n }\right)$$ and therefore the result follows since according to Assumption \ref{assum:k'Main}, $$k'=o \left(\frac{\lambda^2 n}{\log^3 n}\right).$$ For the second term, $$\frac{k'k}{\lambda n} =\omega \left( \sqrt{\frac{k'\log n}{n}\frac{(k')^4}{n^2} }\right)$$ holds if and only if $$(k')^3=o\left( \frac{k^2 n}{\lambda^2 \log n} \right)$$ or 
$$k'=o\left( \left(\frac{k^2 n}{\lambda^2 \log n}\right)^{\frac{1}{3}} \right)$$  which follows from Assumption \ref{assum:k'Main}. For the third term, $$\frac{k'k}{\lambda n} =\omega \left( \sqrt{\frac{k'\log n}{n}k' }\right)$$ holds if and only if $$\lambda=o\left(\frac{k}{\sqrt{n \log n}} \right)$$ which is assumed to be true in Assumption \ref{assum:LambdaMain}. For the fourth term, $$\frac{k'k}{\lambda n} =\omega \left( \sqrt{\frac{k'}{n}}\left(\log n\right)^2 \right)$$ which holds if and only if $$k'=\omega \left( \frac{\lambda^2 n}{k^2} (\log n)^4\right).$$ By assumption \ref{assum:k'Main},  $$k'=\Omega\left(\frac{k^2}{\lambda^2 n}\right)$$ and therefore it suffices to have $$\frac{k^2}{\lambda^2 n}=\omega\left( (\log n)^{2} \right)$$ or $$\lambda=o\left( \frac{k}{\sqrt{n} \log n}\right)$$ which is assumed to be true in Assumption \ref{assum:LambdaMain}.
\end{proof}

\subsection{Proof of Proposition \ref{prop:OGP}}\label{app:OGP}

\begin{proof}[Proof of Proposition \ref{prop:OGP}] We choose $\zeta_{1,n}:=z_1,\zeta_{2,n}:=z_2$ and $r_n>0$ some value with $\max\{ \phi_{k'}\left(\ell_1\right),\phi_{k'}\left(\ell_2\right)\} <r_n\leq \min_{\ell \in (z_{1},z_{2})} \phi_{k'}\left(\ell\right)$.

For the first condition, we choose  $v,w$ binary $k'$-sparse vectors the optimal solutions of $\Phi_{k'}(\ell_1), \Phi_{k'}(\ell_2)$ respectively.
Since $\phi_{k'}(\ell_1) <r_n$ and $\phi_{k'}(\ell_2) <r_n$ it holds$$\max \{ v^{\top}Yv, w^{\top}Yw\} \leq \max \{ \phi_{k'}(\ell_1), \phi_{k'}(\ell_2)\} < r_n.$$
Furthermore, $$\inner{v,x} =\ell_1 \leq z_1=\zeta_{1,n}$$ and  $$\inner{w,x} =\ell_2 \geq z_2=\zeta_{2,n}.$$

For the second condition, as $\min_{\ell \in (z_{1},z_{2})} \phi_{k'}\left(\ell\right)>r_n$  by definition no $v$ binary $k'$-sparse vector with $\inner{v,x} \in (\zeta_{1,n},\zeta_{2,n})=(z_{1},z_{2})$ satisfies $v^{\top}Yv \leq r_n$.
\end{proof}

\subsection{Proof of Proposition \ref{prop:FEW}}\label{app:FEW}

\begin{proof}[Proof of Proposition \ref{prop:FEW}] From Definition \ref{def:FEW},
$$D_{\beta,\ell}=\log \frac{\mu_{\beta}(A)}{\mu_{\beta}(B)}=\log \frac{\sum_{v \in A}e^{-\beta H(v)} }{\sum_{v \in B}e^{-\beta H(v)}}.$$
Now notice that from the definition of overlap and the $\phi$ curve,
$$\min_{v \in A}e^{-\beta H(v)}=\exp\left(\beta \min_{m=\floor{\frac{k'k}{n}},\floor{\frac{k'k}{n}}+1,\ldots,\ell} \phi_{k'}(m)\right)$$
and
$$\min_{v \in B}e^{-\beta H(v)}=\exp\left(\beta \min_{m=\ell,\ell+1,\ldots,2\ell} \phi_{k'}(m)\right).$$
For this reason, to prove our result it suffices to establish the elementary claim that for two sequences of negative numbers $\{a_i\}_{i = 1,2,\ldots,N}$ and $\{b_i\}_{i=1,2,\ldots,M}$,
\begin{align}\label{elem2}\Bigg| \log \frac{ \sum_{i=1}^N a_i}{\sum_{i=1}^M b_i}-\log\left(\frac{\min_{i=1,\ldots,N} a_i}{\min_{i=1,\ldots,M} b_i}\right)\Bigg| \leq \log \left(\max\{N,M\}\right).\end{align} The result then follows by choosing the $a$ sequence corresponding to $e^{-\beta H(v)}, v \in A$, the $b$ sequence corresponding to $e^{-\beta H(v)}, v \in B$ and finally that $\max\{|A|,|B|\} \leq \binom{n}{k'}$ as both $A,B$ are subsets of the $k'$-sparse binary vectors.

To show \eqref{elem2} it suffices to show equivalently for two sequences of positive numbers $\{a_i\}_{i = 1,2,\ldots,N}$ and $\{b_i\}_{i=1,2,\ldots,M}$,
\begin{align}\label{elem}\Bigg| \log \frac{ \sum_{i=1}^N a_i}{\sum_{i=1}^M b_i}-\log\left(\frac{\max_{i=1,\ldots,N} a_i}{\max_{i=1,\ldots,M} b_i}\right)\Bigg| \leq \log \left(\max\{N,M\}\right).\end{align}

Now to prove \eqref{elem} it suffices to show
$$\frac{\max_{i=1,\ldots,N} a_i}{M\max_{i=1,\ldots,M} b_i}
 \leq \frac{\sum_{i=1}^N a_i}{\sum_{i=1}^M b_i} \leq \frac{N\max_{i=1,\ldots,N} a_i}{\max_{i=1,\ldots,M} b_i},$$which follows by direct comparison.
\end{proof}

\subsection{Proof of Lemma~\ref{lem:binom-ratio}}
\label{app:proof-binom-ratio}

\begin{proof}[Proof of Lemma~\ref{lem:binom-ratio}]
For ease of presentation, we write $k$ in place of $k'$ for this proof (since this lemma does not involve $k$). Write $k = n^{1-\bar{\delta}}$ where $\bar\delta \ge \delta$. Assume $0 < t < k$, since the cases $t = 0$ and $t = k$ can be easily verified.

In the case $t \ge ck$ (for some $0 < c < 1$ to be chosen later), we can use the simple binomial bounds $(n/k)^k \le {n \choose k} \le n^k$ to obtain
\[ \log \frac{{n \choose k-t}}{{n \choose k}} \le (k-t) \log n - k \log \frac{n}{k} = k \log k - t \log n \le \frac{t}{c}(1-\bar\delta) \log n - t \log n. \]
Thus there is a constant $c = c(\delta) < 1$ such that if $t \ge ck$ then
\[ \log \frac{{n \choose k-t}}{{n \choose k}} \le - t\bar\delta(1-\delta) \log n = -t(1-\delta) \log\left(\frac{n}{k}\right) \]
as desired.

We now treat the case $t < ck$. By Stirling's approximation, for any $n \ge 1$,
\[ \sqrt{2\pi}\, n^{n+1/2} e^{-n} \le n! \le e\, n^{n+1/2} e^{-n}. \]
By expanding ${n \choose k} = \frac{n!}{k!(n-k)!}$, this yields (for a universal constant $C$)
\begin{align*}
\log \frac{{n \choose k-t}}{{n \choose k}} &\le C + (k+1/2) \log k + (n-k+1/2) \log(n-k) \\
&\qquad- (k-t+1/2)\log(k-t) - (n-k+t+1/2)\log(n-k+t) \\
&= C + (k+1/2) \log \frac{k}{k-t} + (n-k+1/2)\log\frac{n-k}{n-k+t} + t \log \frac{k-t}{n-k+t} \\
&\le C + (k+1/2) \log \frac{k}{k-t} + t \log \frac{k-t}{n-k+t} \\
&\le C + (k + 1/2)\left(\frac{k}{k-t} - 1\right) + t \log \frac{k}{n-k} \qquad\text{using } \log(a) \le a-1 \\
&= C + (k + 1/2)\frac{t}{k-t} + t \log \frac{k}{n-k} \\
&\le C + (k + 1/2)\frac{t}{k(1-c)} + t \log \frac{2k}{n} \qquad\text{since $k \le n^{1-\delta}\le n/2$ for sufficiently large $n$} \\
&\le C + \frac{2t}{1-c} + t \log 2 - t\log \left(\frac{n}{k}\right).
\end{align*}
Since $\log(n/k) \ge \delta \log n$, this completes the proof.
\end{proof}

\subsection*{Acknowledgments}
We thank David Gamarnik for helpful comments on an earlier draft. We thank the anonymous reviewers for their helpful comments.

\bibliographystyle{alpha}
\bibliography{main}

\end{document}